\numberwithin{equation}{section}
\begin{document}

\newcommand{\filename}{four-cusps-2017-05-18.tex}

\newcommand{\PP}{\mathbb P}
\newcommand{\ZZ}{\mathbb Z}
\newcommand{\lra}{\longrightarrow}
\newcommand{\QQ}{\mathbb Q}
\newcommand{\CC}{{\mathbb C}}
\def\OO{{\mathcal O}}
\newcommand{\Jac}{\operatorname{Jac}}
\newcommand{\WW}{S}
\newcommand{\RES}{X}

\newcommand{\Xt}{\RES_{3,3,3,3}}
\newcommand{\Ft}{\mathcal F_{3,3,3,3}}
\newcommand{\Xf}{\RES_{4,3,1}}
\newcommand{\Ff}{\mathcal F_{4,3,1}}
\newcommand{\Yt}{\XXX_{3,3,3,3}}
\newcommand{\Yf}{\XXX_{4,3,1}}

\newcommand{\Xs}{\RES_{6,3,2,1}}
\newcommand{\Ys}{\XXX_{6,3,2,1}}
\newcommand{\Ss}{\WW_{6,3,2,1}}

\newcommand{\FF}{E}
\newcommand{\FFb}{F}
\newcommand{\FFc}{C}
\newcommand{\FFF}{{\mathbb F}}
\newcommand{\lpsi}{\mathfrak q}
\newcommand{\homeo}{\alpha}
\newcommand{\pbi}{\beta}
\newcommand{\elpi}{\varphi}
\newcommand{\proj}{\pi}
\newcommand{\XXX}{Y}
\newcommand{\deck}{\psi}

\newcommand{\HE}{H}

\newcommand{\PLpsi}{{\mathfrak p}}

\newcommand{\Ee}{e}

\newcommand{\mX}{\mathcal Y'}
\newcommand{\mY}{\mathcal Y}
\newcommand{\mf}{\mathfrak p}

\newcommand{\Num}{\operatorname{Num}}
\newcommand{\Pic}{\operatorname{Pic}}
\newcommand{\NS}{\operatorname{NS}}
\newcommand{\MW}{\operatorname{MW}}
\newcommand{\supp}{\operatorname{supp}}

\title{On Enriques surfaces with four cusps}
\author{S\l awomir Rams}
\address{Institute of Mathematics, Jagiellonian University, 
ul. {\L}ojasiewicza 6,  30-348 Krak\'ow, Poland}
\email{slawomir.rams@uj.edu.pl}

\author{Matthias Sch\"utt}
\address{Institut f\"ur Algebraische Geometrie, Leibniz Universit\"at
  Hannover, Welfengarten 1, 30167 Hannover, Germany}
    \address{Riemann Center for Geometry and Physics, Leibniz Universit\"at
  Hannover, Appelstrasse 2, 30167 Hannover, Germany}

\email{schuett@math.uni-hannover.de}

%
%

\thanks{Funding   by ERC StG~279723 (SURFARI) and  NCN grant N N201 608040 
 is gratefully acknowledged.}

\subjclass[2010]
{Primary: {14J28};  Secondary {14F35}}
\keywords{Enriques surface, cusp, three-divisible set, fundamental group, elliptic fibration, lattice polarization, K3 surface.}

\date{May 22, 2017}

\begin{abstract}
 We study
Enriques surfaces with four disjoint A$_2$-configurations. In particular, we construct open Enriques surfaces with 
fundamental groups $(\ZZ/3\ZZ)^{\oplus 2} \times \ZZ/2\ZZ$ and  $\ZZ/6\ZZ$, 
completing the picture of the A$_2$-case from \cite{kz}.
We also construct an explicit Gorenstein $\QQ$-homology projective plane
of singularity type A$_3+3$A$_2$,
supporting an open case from \cite{HKO}.
\end{abstract}

\maketitle


\newcommand{\D}{\displaystyle}
\newcommand{\vv}{\vspace{0.3cm}}
\newcommand{\vV}{\vspace{0.5cm}}
\newcommand{\VV}{\vspace{2cm}}

\theoremstyle{remark}
\newtheorem{obs}{Observation}[section]
\newtheorem{rem}[obs]{Remark}
\newtheorem{examp}[obs]{Example}
\newtheorem{notation}[obs]{Notation}
\newtheorem{Claim}[obs]{Claim}
\newtheorem{Tool}[obs]{Tool}

\theoremstyle{definition}
\newtheorem{defi}[obs]{Definition}
\newtheorem{prop}[obs]{Proposition}
\newtheorem{thm}[obs]{Theorem}
\newtheorem{lemm}[obs]{Lemma}
\newtheorem{cor}[obs]{Corollary}
\newtheorem*{THM}{Proposition}
\newcommand{\ux}{\underline{x}}


\section{Introduction}


The main aim of this note is to study 
 Enriques surfaces with four disjoint A$_2$-configurations,
 the maximum number possible
 (because an Enriques surface
 has Picard number $10$). 
We 
shall make heavy use of elliptic fibrations to study the moduli of  such Enriques surfaces:

\begin{thm}
\label{thm0}
Enriques surfaces  with  four disjoint A$_2$-configurations 
come in exactly two irreducible two-dimensional families $\Ft, \Ff$.
\end{thm}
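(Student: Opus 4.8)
The plan is to translate the statement into lattice theory on the Enriques lattice $\NS(\WW)\cong U\oplus E_8(-1)$, solve the resulting finite combinatorial problem, and realise the two possible answers by explicit elliptic fibrations. So let $\WW$ be an Enriques surface with four disjoint $A_2$-configurations and let $M\subseteq\NS(\WW)$ be the sublattice spanned by the eight $(-2)$-curves. Since these curve classes have the block-diagonal Gram matrix of $A_2\oplus A_2\oplus A_2\oplus A_2$, they are independent and $M\cong A_2^{\oplus 4}$, negative definite of rank $8$ with discriminant group $(\ZZ/3\ZZ)^{\oplus 4}$. Let $\overline M$ be the primitive closure of $M$ in $\NS(\WW)$. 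Then $\overline M/M$ is an isotropic subgroup of $(\ZZ/3\ZZ)^{\oplus 4}$ for the discriminant form $q_{A_2}^{\oplus4}$, recording exactly the three-divisible subsets carried by the configuration. Writing $T:=\overline M^{\perp}\subseteq\NS(\WW)$, one has $\operatorname{rk}T=2$ and $q_T\cong-q_{\overline M}$, so the length of $q_{\overline M}$ is at most $2$; since this length equals $4-2\operatorname{rk}(\overline M/M)$, it follows that $\operatorname{rk}(\overline M/M)\in\{1,2\}$ --- in particular the configuration is always three-divisible in at least one way.

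Next I would do the combinatorics. Over $\mathbb F_3$ a nonzero vector of $(\ZZ/3\ZZ)^{\oplus4}$ is isotropic for $q_{A_2}^{\oplus4}$ precisely when exactly three of its four coordinates are nonzero, i.e.\ when it ``uses'' three of the four $A_2$-blocks; and $O(A_2^{\oplus4})=O(A_2)\wr\mathfrak S_4$ acts on the discriminant group through the signed permutations $(\ZZ/2\ZZ)^{4}\rtimes\mathfrak S_4$ of the blocks. A short check shows this action is transitive on isotropic lines and on isotropic planes, so by Nikulin's correspondence between even overlattices and isotropic subgroups of the discriminant group there are exactly two possibilities for the pair $(\overline M,M)$: case (i), with $\operatorname{rk}(\overline M/M)=2$, where $\overline M\cong E_8(-1)$, $T\cong U$, and all $\binom 43$ triples of blocks are three-divisible; and case (ii), with $\operatorname{rk}(\overline M/M)=1$, where $\overline M\cong E_6(-1)\oplus A_2(-1)$, $T\cong U(3)$, and exactly one triple is three-divisible.

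For each of these two cases I would verify, via Nikulin's criteria and the explicit small-rank discriminant forms, that the primitive embedding $\overline M\hookrightarrow U\oplus E_8(-1)$ exists and is unique up to isometry. Surjectivity of the period map together with lattice-polarised Torelli for Enriques surfaces then shows that imposing the configuration cuts the $10$-dimensional moduli of Enriques surfaces down to a non-empty locus, irreducible (by the usual connectedness of the relevant period-domain quotient) and of dimension $10-\operatorname{rk}\overline M=2$. One still has to check that on a very general such $\WW$ the eight classes of $M$ are genuinely represented by disjoint smooth rational curves meeting in the prescribed $A_2^{\oplus4}$-pattern: this is a chamber argument, using that these are $(-2)$-classes and that a very general member carries no further $(-2)$-class coming from $T$, so that a suitable choice of ample cone makes all eight classes simultaneously effective and irreducible. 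Finally, to see that both cases actually occur --- so that the number of families is \emph{exactly} two --- I would produce an explicit Enriques surface in each by an elliptic fibration; this yields the two families $\Ft$ and $\Ff$, obtained from suitable rational elliptic surfaces such as $\Xt$ and $\Xf$ with the four $A_2$-configurations visible inside $I_3$- or additive fibres. The two families are genuinely distinct, being separated by $\overline M/M$ --- equivalently by the $3$-part of the fundamental group of the complement of the eight curves, which is $(\ZZ/3\ZZ)^{\oplus2}\times\ZZ/2\ZZ$ in case (i) and $\ZZ/6\ZZ$ in case (ii).

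The main obstacle I expect is precisely this realisation step together with the chamber argument: upgrading ``at most two lattice types'' to ``exactly two honest $2$-dimensional families of Enriques surfaces, one of which contains \emph{every} Enriques surface with four disjoint $A_2$-configurations'' forces one to control the ample cone of the very general member and to exhibit concrete surfaces. It is here that the elliptic fibrations carry the argument --- by making the configurations and their three-divisible sets explicit and computable --- which also supplies the Weierstrass models underlying the constructions in the remainder of the paper.
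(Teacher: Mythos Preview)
Your lattice-theoretic plan differs substantially from the paper's route. The paper proceeds via elliptic fibrations: Lemma~\ref{lem-1-2} uses Picard--Lefschetz reflections to move the four $A_2$-configurations into fibres of a pencil $|2H|$, whose Jacobian is then forced to be one of the extremal rational elliptic surfaces $\Xt$, $\Xf$, $X_{4,4}$; the Enriques surface is recovered from this Jacobian by a logarithmic transformation of order~$2$, and irreducibility of each family comes from a monodromy argument on the configuration space of the two ramification points (Lemma~\ref{lem:Ft}, \S\ref{ss:2nd}).

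There is a genuine gap in your proposal, namely the irreducibility claim for case~(ii). The invariant $\overline M\subset\Num(S)$ is too coarse: the moduli problem really lives on the K3 cover $Y$, where the relevant datum is the $3$-divisibility structure among the eight lifted $A_2$-configurations $\mathcal A^\pm$, and this is \emph{not} determined by $\overline M$. Concretely, every surface $S\in\Ft$ carries both a configuration of your type~(i) and one of your type~(ii) (Lemmas~\ref{lem-f3333bigcode} and~\ref{lem-f3333smallcode}, Theorem~\ref{thm}(b)), so your case~(ii) sweeps out all of $\Ft\cup\Ff$ rather than a single component. Correspondingly the generic K3 cover in case~(ii) can have $\NS(Y)$ of discriminant $-324$ (when $S\in\Ft$, Lemma~\ref{lem:324}) or $-36$ (when $S\in\Ff$, proof of Proposition~\ref{prop-ro18}), giving two inequivalent rank-$18$ polarisations and hence at least two irreducible components of the period-domain quotient you invoke. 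For the same reason your closing claims fail: $\Ft$ and $\Ff$ are not separated by $\overline M/M$ --- they overlap on a one-dimensional locus (Proposition~\ref{prop-ro18}, \S\ref{ss:explicit}) --- and the fundamental group in case~(ii) can be $S_3\times\ZZ/3\ZZ$ as well as $\ZZ/6\ZZ$ (Lemma~\ref{lem-41}). A lattice approach could be made to work, but only by classifying from the outset the possible primitive closures of the anti-invariant classes $F^+-F^-$ inside $H^2(Y,\ZZ)^-$; this is essentially what the paper establishes \emph{a posteriori} in \S\ref{s:moduli}.
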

This result which relies on the understanding of
Picard-Lefschetz reflections on the Enriques surface and its K3-cover following \cite{kz}, 
enables us to determine the fundamental groups of the open Enriques surfaces 
obtained by removing the A$_2$-configurations (often also referred to as the cusps).

Our paper draws on the classification of possible  
fundamental groups of open
Enriques surfaces (i.e. complements of configurations of smooth rational curves)
initiated in \cite{kz}. 
Keum and Zhang state  a list of $26$ possible groups and give $24$ examples. 
Here we supplement and correct their results by adding one example and one group supported by another example.
Our second main result is as follows.

\begin{thm}
\label{thm1}
Let $G\in\{S_3 \times \ZZ/3\ZZ, (\ZZ/3\ZZ)^{\oplus 2} \times \ZZ/2\ZZ, \ZZ/6\ZZ\}$.
Then there is a complex Enriques surface $S$
with a set $\mathcal A$ of four disjoint A$_2$-configurations 
such that
\[
\pi_1(S\setminus \mathcal A)\cong G.
\]
\end{thm}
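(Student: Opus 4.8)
The plan is to realise each of the three groups by an explicit pair $(S,\mathcal A)$: two of them as suitably generic members of the families $\Ft$ and $\Ff$ of Theorem~\ref{thm0}, the third as a special member carrying an extra three-divisible set of cusps. Starting from the elliptic models underlying $\Ft$ and $\Ff$ — whose reducible fibres exhibit the eight $(-2)$-curves making up $\mathcal A$ — I would in each case write down the étale K3-cover $\proj\colon X\to S$ together with the eight A$_2$-configurations $\widetilde{\mathcal A}=\proj^{-1}(\mathcal A)$ and the fixed-point-free deck involution $\deck$. Since $X$ is simply connected, $\pi_1(S\setminus\mathcal A)$ sits in the extension
\[
1\to \pi_1(X\setminus\widetilde{\mathcal A})\to \pi_1(S\setminus\mathcal A)\to \ZZ/2\ZZ\to 1 ,
\]
so everything reduces to computing $\pi_1(X\setminus\widetilde{\mathcal A})$ and the action of $\deck$ on it.

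\emph{The group on the K3-cover.} Its abelianisation $N=H_1(X\setminus\widetilde{\mathcal A})$ is the cokernel of the intersection map $H_2(X;\ZZ)\to R^\vee$, where $R\cong A_2^{\oplus8}$ is the sublattice spanned by the sixteen curves of $\widetilde{\mathcal A}$; equivalently, through the three-divisible-set formalism of \cite{kz}, $N$ is determined by the way $R$ embeds primitively in $\NS(X)$. A lattice computation with the explicit models gives $N\cong(\ZZ/3\ZZ)^{\oplus2}$ in the two generic cases and $N\cong\ZZ/3\ZZ$ for the special member. To upgrade the surjection $\pi_1(X\setminus\widetilde{\mathcal A})\twoheadrightarrow N$ to an isomorphism, I would pass to the connected abelian cover $W\to X\setminus\widetilde{\mathcal A}$ of degree $|N|$ and prove $W$ simply connected. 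Its smooth compactification $\overline W$ is a cyclic, resp.\ $(\ZZ/3\ZZ)^{\oplus2}$, cover of $X$ branched only over the eight A$_2$-points; at each of these the relevant monodromy is a generator of the local fundamental group $\ZZ/3\ZZ$, so locally the cover is the smooth quotient map $\CC^2\to\CC^2/\mu_3$. Hence $\overline W$ is smooth with $K_{\overline W}=0$, and an Euler-number count identifies it as a K3 surface, so $\pi_1(\overline W)=1$. Since by construction the residual configuration of $\widetilde{\mathcal A}$ on $\overline W$ admits no further three-divisible set, removing it does not change $\pi_1$, and we conclude $\pi_1(X\setminus\widetilde{\mathcal A})\cong N$.

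\emph{Descent to $S$.} As $|N|$ is odd, the extension above splits, so $\pi_1(S\setminus\mathcal A)\cong N\rtimes_{\deck}\ZZ/2\ZZ$ and only the $\deck$-action must be pinned down. The involution $\deck$ permutes $\widetilde{\mathcal A}$ in the four pairs lying over the cusps of $S$; on the $\ZZ/3\ZZ$-summand attached to a pair it acts by the corresponding swap, twisted by $-1$ precisely when $\deck$ interchanges the two $(-2)$-curves inside an A$_2$-configuration (such an interchange acts by $-1$ on the discriminant group of $A_2$). For one family this yields the action $\mathrm{diag}(1,-1)$ on $N\cong(\ZZ/3\ZZ)^{\oplus2}$, hence $\pi_1(S\setminus\mathcal A)\cong\ZZ/3\ZZ\times\bigl(\ZZ/3\ZZ\rtimes_{-1}\ZZ/2\ZZ\bigr)\cong S_3\times\ZZ/3\ZZ$; for the other family the action is trivial, hence $(\ZZ/3\ZZ)^{\oplus2}\times\ZZ/2\ZZ$; and for the special member $N\cong\ZZ/3\ZZ$ with trivial action, hence $\ZZ/6\ZZ$. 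This realises all three groups.

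\emph{Main obstacle.} The decisive point is the upper bound in the second paragraph: that the degree-$|N|$ cover $\overline W$ really is a smooth K3 surface and that its residual configuration supports no three-divisible set. Both need a completely explicit description of $\overline W$ and of how every A$_2$-configuration lifts, which is exactly what the elliptic models provide. By comparison $G=S_3\times\ZZ/3\ZZ$ is the lightest case — it is in essence the example already recorded in \cite{kz} — and the genuinely new content is the two abelian groups, together with the identification of the special member inside the families.
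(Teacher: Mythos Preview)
Your overall architecture is the right one and matches the paper's (which in turn follows \cite{kz}): the extension
\[
1\to \pi_1(X\setminus\widetilde{\mathcal A})\to \pi_1(S\setminus\mathcal A)\to \ZZ/2\ZZ\to 1
\]
together with the identification of $\pi_1(X\setminus\widetilde{\mathcal A})$ with the group $N$ governed by the $3$-divisible sets among $\widetilde{\mathcal A}$, and then the $\deck$-action on $N$ to determine the semidirect product. The paper packages this as Lemma~\ref{lem-41}: the three groups correspond exactly to the three possible combinations of ``number of $3$-divisible sets in $\mathcal A$'' and ``number of $3$-divisible sets in $\mathcal A^\pm$''. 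Your generic $\Ff$ giving $S_3\times\ZZ/3\ZZ$ and generic $\Ft$ giving $(\ZZ/3\ZZ)^{\oplus2}\times\ZZ/2\ZZ$ are exactly Lemma~\ref{lem-f431} and Lemma~\ref{lem-f3333bigcode}.

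The genuine gap is the $\ZZ/6\ZZ$ case. You propose to realise it on a ``special member carrying an extra three-divisible set of cusps'', but this is backwards on two counts. First, $\ZZ/6\ZZ$ requires $N\cong\ZZ/3\ZZ$, i.e.\ \emph{fewer} $3$-divisible sets on the K3 side (exactly one, not four), not an extra one. Second, and more seriously, specialising the surface cannot achieve this: the count of $3$-divisible sets among the eight fibre-supported A$_2$'s is a lattice invariant that is constant across each family (in both $\Ft$ and $\Ff$ the generic K3 already has the full $(\ZZ/3\ZZ)^2$ worth), and specialisation only enlarges $\NS$. The paper's point---and this is the new idea you are missing---is that one keeps the \emph{same} generic $S\in\Ft$ but changes the set $\mathcal A$: in one of the four $I_3$ fibres one replaces the curve $F_4''$ by the third component $F_4$, obtaining the configuration $F_1',F_1'',\dots,F_3',F_3'',F_4',F_4$. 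Lemma~\ref{lem-f3333smallcode} then shows, by intersecting with a bisection and by a ternary-code argument on the K3, that this modified $\mathcal A$ and its preimage $\mathcal A^\pm$ each contain \emph{exactly one} $3$-divisible set, which via Lemma~\ref{lem-41}(a) forces $\pi_1(S\setminus\mathcal A)\cong\ZZ/6\ZZ$. So all three groups are realised on surfaces in $\Ft$ and $\Ff$ with no specialisation needed; the freedom is in the choice of $\mathcal A$, not of $S$.

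A smaller point: your mechanism for the $-1$ action (``$\deck$ interchanges the two $(-2)$-curves inside an A$_2$'') is not what happens. The Enriques involution swaps the two A$_2$'s over each cusp, sending $F_j'^+\mapsto F_j'^-$ etc.; the $-1$ eigenvalue in the $\Ff$ case arises because one of the two generators of $N$ is a $3$-divisible class on $X$ (see \eqref{eq:K3-3}) that is \emph{not} a pullback from $S$ and is sent to its negative by $\deck^*$, while the other generator is a pullback and hence $\deck^*$-invariant.
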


For a more concise statement, the reader is referred to  Theorem \ref{thm}.
This completes the picture of the A$_2$-case.


Another key point of our paper is
the clarification that there are indeed Enriques surfaces admitting different sets of 
four disjoint A$_2$-configurations which lead to each alternative of the fundamental group in Theorem \ref{thm1}.
This issue will be discussed in detail in Section \ref{s:moduli}
and also supported by an explicit example, see \S\ref{ss:explicit}.

While some of the constructions involved in our methods are analytic in nature,
notably the notion of logarithmic transformations of elliptic surfaces,
we will crucially facilitate Enriques involutions of base change type as studied systematically in \cite{hm1}
since this algebro-geometric technique grants us good control of the curves on the surfaces and their moduli.
We review this among all the prerequisites and basics necessary for the understanding of this paper 
in Section \ref{s-lat}.
Section \ref{s-2fam} introduces the two families $\Ft$ and $\Ff$
and proves Theorem \ref{thm0}.
The proof of Theorem \ref{thm1}
 is given in Section \ref{s-fundamental groups}.
As a biproduct, our examples produce an explicit Gorenstein $\QQ$-homology projective plane
of singularity type A$_3+3$A$_2$;
by Proposition \ref{prop} we settle an open case from \cite{HKO}.
The paper concludes with further considerations concerning the moduli of Enriques surfaces 
with 4 disjoint A$_2$-configurations.

\vspace{.3cm}

\noindent
{\it Convention:} 
In this note  the base field is always $\CC$.
Root lattices $A_n, D_k, E_l$ are taken to be negative definite.

\section{Preliminaries and basics}
\label{s-lat}


\subsection{A$_2$-configurations}
\label{ss:A_2}

Let $\WW$ be an Enriques surface that   
contains four disjoint A$_2$-configurations, i.e.~eight  smooth rational curves $\FFb_1', \FFb_1'', \ldots, \FFb_4', \FFb_4''$ such that 
$$
\FFb_{j}'.\FFb_{j}'' = 1 \mbox{ for } j= 1,\ldots, 4 \, ,
$$
and the rational curves in question are mutually disjoint otherwise. 
We say that a collection of disjoint A$_2$-configurations $\FFb'_1,  \FFb''_1, \ldots,\FFb'_l, \FFb''_l$ 
is {\sl three-divisible}
if 
one can label the rational curves in each A$_2$-configuration such that the divisor 
\begin{equation} \label{eq-3div}
\sum_{j=1}^{l} (\FFb'_{j} - \FFb''_{j})      
\end{equation}
is divisible by $3$ in $\mbox{Pic}(\WW)$. 
Equivalently,
since 
$$
\mbox{Pic}(\WW) = \mbox{H}^2(\WW,\ZZ) = \ZZ^{10} \oplus \ZZ/2\ZZ,
$$
the class given by \eqref{eq-3div}  is $3$-divisible in $\mbox{Num}(\WW)$,
the lattice given by divisors up to numerical equivalence. 
Recall that $\Num(S)$ is a unimodular even hyperbolic lattice; in fact
\[
\Num(\WW) = U + E_8
\]
where $U$ denotes the unimodular hyperbolic plane. 
The following result (which is similar to \cite[Lem.~1]{b} and \cite[Lem.~1.1]{br}) 
will be instrumental for some of our investigations.

\begin{lemm}
\label{lem:l=3}
Let $\sum_{j=1}^{l} (\FFb'_{j} - \FFb''_{j})=3D$ in $\Num(S)$. Then $l=3$, but $D$ is neither effective nor anti-effective.
\end{lemm}

\begin{proof}
The fact $l=3$ is  easily derived
using
$(\FFb'_j-\FFb''_j)^2=-6$ and  the integrality and rank of the lattice $\Num(\WW)$.
Assume that $D$ is effective.
Then since $F_j'.D=-1$ for each $j=1,2,3$, each of these curves $F_j'$ is contained in the support of $D$.
Hence $D'=D-(F_1'+F_2'+F_3')$ is still effective.
On the other hand, we obviously have $3D'\leq 0$,
hence $3D\sim 0$, but this is not compatible with $D^2=(D')^2=-2$,
contradiction.

If $D\leq 0$, then an analogous argument applies to the $F_j''$,
thus completing the proof of the lemma.
%
\end{proof}


 We follow the approach of  \cite[$\S$~3]{kz} and let  $M$ (resp. $\overline{M}$) 
 denote the lattice spanned by $\FFb'_1, \ldots, \FFb''_4$
 in $\mbox{Num}(\WW)$ (resp. its primitive closure). 
 
 \begin{lemm}
 \label{lem:M}
 The index of $M$ inside $\overline M$ satisfies $[\overline M:M]\in\{3,3^2\}$.
 \end{lemm}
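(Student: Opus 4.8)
The plan is to analyze the lattice $M$ and its discriminant group directly. First I would record that $M$ is an orthogonal direct sum of four copies of $A_2$, so $M \cong A_2^{\oplus 4}$ with discriminant group $(\ZZ/3\ZZ)^{\oplus 4}$ of order $81$ and discriminant form the fourfold sum of the standard $A_2$-form. Since $\Num(S) = U + E_8$ is unimodular, for any primitive sublattice $\overline M$ the index $[\overline M : M]$ satisfies $[\overline M : M]^2 = |\mathrm{disc}(M)| / |\mathrm{disc}(\overline M)|$, so $[\overline M : M]^2$ divides $81$. Hence $[\overline M : M] \in \{1, 3, 9\}$, and it remains to rule out the case $[\overline M:M]=1$, i.e.\ to show that $M$ is not already primitively embedded.

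Second, to exclude index $1$ I would invoke the three-divisibility phenomenon explained just above the lemma: the glue vector $v = \tfrac13\sum_{j=1}^{3}(\FFb'_j - \FFb''_j)$, for a suitable labelling and choice of three of the four configurations, gives an element of $\overline M \setminus M$. More precisely, I would argue that among four disjoint A$_2$-configurations on an Enriques surface one can always find a three-divisible subset — this should follow from the same argument \cite{kz} uses, combined with the discriminant-form constraints: an isotropic subgroup of the discriminant group $(\ZZ/3\ZZ)^{\oplus 4}$ must be respected by the embedding into the unimodular $\Num(S)$, and the existence of such a nontrivial isotropic vector (realized by an actual effective divisor class of the form \eqref{eq-3div}) forces $[\overline M : M] \geq 3$. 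I expect to quote \cite[\S 3]{kz} for the fact that four disjoint A$_2$-configurations on an Enriques surface always contain a three-divisible triple.

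Finally I would need to check that $v$ (or whatever glue vector realizes the $3$-divisibility) actually lies in $\Num(S)$, not merely in $M^\vee$ — but this is exactly the content of $3$-divisibility of \eqref{eq-3div} in $\Num(S)$, so it is automatic. Combining the two bounds, $[\overline M : M] \in \{3, 9\} = \{3, 3^2\}$, which is the assertion.

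The main obstacle I anticipate is the lower bound: showing that index $1$ cannot occur requires genuine input about Enriques surfaces (the existence of a three-divisible subset among the four configurations), rather than pure lattice theory. The upper bound $[\overline M : M] \le 9$ is immediate from unimodularity of $\Num(S)$ and $|\mathrm{disc}(A_2^{\oplus 4})| = 81$; the subtlety is purely that one must know a three-divisible triple exists, which is where I would lean on the hyperbolicity of $\Num(S)$ and the cited results of Keum--Zhang.
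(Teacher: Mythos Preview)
Your upper bound is correct and matches the paper. But your approach to the lower bound is unnecessarily indirect and risks circularity: in this paper, the existence of a $3$-divisible triple among the four $A_2$-configurations (Corollary~\ref{eq-2poss}) is \emph{derived from} Lemma~\ref{lem:M}, not the other way around. Invoking \cite{kz} for that fact would require checking that their argument does not itself rest on the same lattice computation.

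More importantly, you overestimate the difficulty. The lower bound is \emph{pure lattice theory}, needing no geometric input about Enriques surfaces beyond the unimodularity of $\Num(S)$. The paper's argument is simply this: if $M$ were primitive in $\Num(S)$, then unimodularity forces $M^\vee/M \cong (M^\perp)^\vee/M^\perp$; but $M^\vee/M \cong (\ZZ/3\ZZ)^4$ has length $4$, while $M^\perp$ has rank $2$, so its discriminant group has length at most $2$ --- contradiction. Your parenthetical remark about ``discriminant-form constraints'' and isotropic subgroups is groping toward this, but the clean statement is the length comparison, and it makes the detour through $3$-divisibility (and any appeal to \cite{kz}) entirely unnecessary.
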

 
 \begin{proof}
The lattice $M$ has  discriminant
 $d(M) = 3^4$, so $[\overline{M}:M] \in \{1, 3, 3^2\}$.
We claim that the first case is impossible. 
Indeed, suppose that $M = \overline{M}$. 
Then $M\hookrightarrow\Num(\WW)$ is a primitive embedding,
so
\[
M^\vee/M \cong (M^\perp)^\vee/M^\perp.
\]
By assumption
the left-hand side is isomorphic to $(\ZZ/3\ZZ)^4$ while the right-hand side 
comes from the rank-$2$ lattice $M^\perp$,
thus has  length at most 2,
contradiction. 
\end{proof}

\begin{cor} \label{eq-2poss} 
The four A$_2$-configurations
$\FFb'_1, \ldots, \FFb''_4$
contain  either one or four 
$3$-divisible sets.
\end{cor}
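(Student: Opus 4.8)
The plan is to count the $3$-divisible sets contained in a fixed collection of four disjoint A$_2$-configurations by working inside the discriminant group $M^\vee/M\cong(\ZZ/3\ZZ)^4$, where each A$_2$-summand contributes a copy of $\ZZ/3\ZZ$ generated by (the image of) $\tfrac13(\FFb'_j-\FFb''_j)$ with discriminant form value $-\tfrac23\pmod{2\ZZ}$. A choice of labelling in the $j$-th configuration corresponds to a sign, so a putative $3$-divisible set is recorded by a nonzero vector $v=(\epsilon_1,\ldots,\epsilon_4)\in(\ZZ/3\ZZ)^4$ with all $\epsilon_j\neq 0$; reversing the labelling of every configuration replaces $v$ by $-v$, so genuinely distinct $3$-divisible sets correspond to such vectors up to an overall sign. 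The key point is that $\sum_j(\FFb'_j-\FFb''_j)$ (with the chosen signs) is $3$-divisible in $\Num(\WW)$ precisely when the corresponding element of $M^\vee/M$ lies in the image $\overline M/M$ of the primitive closure — equivalently, by Lemma~\ref{lem:M}, when $v$ lies in the isotropic subgroup $\overline M/M\subset M^\vee/M$, whose order is $[\overline M:M]\in\{3,3^2\}$.

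First I would make the dictionary precise: an element $w\in M^\vee$ has a multiple lying in $\Num(\WW)$ iff $w\bmod M\in\overline M/M$, and $\overline M/M$ is isotropic for the discriminant form $q_M$ because $\overline M$ is an even lattice containing $M$ with finite index. Next I would treat the two cases from Lemma~\ref{lem:M} separately. If $[\overline M:M]=3$, then $\overline M/M$ is generated by a single nonzero isotropic vector $v_0$; isotropy forces $-\tfrac23(\epsilon_1^2+\cdots+\epsilon_4^2)\equiv 0\pmod{2\ZZ}$, i.e. $\sum\epsilon_j^2\equiv 0\pmod 3$, and since each $\epsilon_j^2\in\{0,1\}$ this means an even number of the $\epsilon_j$ vanish; combined with the requirement (coming from a genuine $3$-divisible set) that we can at least arrange all four configurations to participate, one checks $v_0$ can be taken with all entries nonzero. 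The two elements $v_0,-v_0=2v_0$ then give the same $3$-divisible set, so exactly one $3$-divisible set occurs, accounting for the first alternative.

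If instead $[\overline M:M]=3^2$, then $\overline M/M$ is a two-dimensional isotropic subspace $V\subset(\ZZ/3\ZZ)^4$. Over $\ZZ/3\ZZ$ the relevant quadratic form is (a nonzero multiple of) $x_1^2+x_2^2+x_3^2+x_4^2$, and I would invoke the standard fact that a maximal totally isotropic subspace of this rank-$4$ form has dimension $2$ and that, up to the orthogonal group, $V=\{(a,b,-b,a')\}$ — more usefully, I would just enumerate: $V$ has $9$ elements, $8$ nonzero, hence $4$ up to sign, and one checks directly that for the split form each of these $4$ nonzero-up-to-sign classes has all coordinates nonzero (the isotropic planes of $\sum x_i^2$ over $\mathbb F_3$ pair up the coordinates so that vanishing of one coordinate forces vanishing of its partner, which is incompatible with nonzeroness only for the zero vector). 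Thus there are exactly four $3$-divisible sets, giving the second alternative. The main obstacle is the clean identification of the discriminant form of $M$ with the split quaternary form over $\mathbb F_3$ and the verification that its isotropic lines/planes have the claimed support pattern; once that linear-algebra lemma over $\mathbb F_3$ is in hand, both counts $1$ and $4$ drop out and no other value is possible, which is exactly the assertion of Corollary~\ref{eq-2poss}.
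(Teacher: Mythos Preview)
Your overall strategy---encoding $3$-divisible sets as nonzero elements of the isotropic subgroup $\overline M/M\subset M^\vee/M\cong(\ZZ/3\ZZ)^4$ and counting lines there according to whether $[\overline M:M]=3$ or $9$---is exactly right and is essentially what the paper has in mind. But the execution contains a genuine error that propagates through both cases.

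You assert that a $3$-divisible set is recorded by a vector with \emph{all} $\epsilon_j\neq 0$. This contradicts what the paper establishes just before Lemma~\ref{lem:M}: a $3$-divisible set on an Enriques surface consists of exactly \emph{three} A$_2$-configurations, so the corresponding vector has exactly three nonzero coordinates and one zero. You then compound this with an arithmetic slip: from $\sum_j\epsilon_j^2\equiv 0\pmod 3$ with each $\epsilon_j^2\in\{0,1\}$ you conclude ``an even number of the $\epsilon_j$ vanish,'' but in fact the condition says the number of nonzero entries is divisible by $3$, hence equals $0$ or $3$. So every nonzero isotropic vector has support of size exactly $3$, not $4$.

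With this correction the argument goes through cleanly. If $[\overline M:M]=3$, the single isotropic line gives one triple, hence one $3$-divisible set. If $[\overline M:M]=9$, the eight nonzero vectors of the isotropic plane $V$ give four lines; each coordinate hyperplane $\{x_i=0\}$ meets $V$ in a line (it cannot contain $V$, since the form $x_1^2+x_2^2+x_3^2$ on $\mathbb F_3^3$ has Witt index $1$), so the four lines have precisely the four distinct $3$-element supports. That yields four $3$-divisible sets, one for each triple, matching the paper's subsequent remark that in this case every triplet is $3$-divisible.
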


In particular, we can infer that
\begin{equation} \label{eq-unimod}
\FFb'_1, \ldots, \FFb''_4 \mbox{ contain four } 3\mbox{-divisible sets if and only if } \; \overline{M} \mbox{ is unimodular.}
\end{equation}
In other words, in this case each triplet of the A$_2$-configurations in question is $3$-divisible
up to relabelling the rational curves.

\subsection{Elliptic fibrations}
\label{ss:ell}

We start by recalling some basic concepts and relations.
Any complex Enriques surface $\WW$ admits an elliptic fibration
\begin{eqnarray}
\label{eq:pi}
\varphi:\;\; \WW \to \PP^1.
\end{eqnarray}
There are two fibres of multiplicity two;
their supports are usually called half-pencils.
The difference of the two half-pencils gives the canonical divisor
which represents the two-torsion in $\mbox{H}^2(\WW,\ZZ)$.
This already shows that the fibration cannot have a section,
but by \cite[Prop. VIII.17.6]{bpv} there always is a bisection $R$ of square $R^2=0$ or $-2$,
i.e. an irreducible curve $R$ such that $R.F=2$ for any fiber $F$ of \eqref{eq:pi}.

The moduli of Enriques surfaces can be studied through the universal cover 
\begin{eqnarray}
\label{eq:K3}
\pi: \XXX\to\WW
\end{eqnarray}
which is a K3 surface.
By construction, this induces an elliptic fibration 
\begin{eqnarray}
\label{eq:K3-ell}
\tilde\varphi: \;\;\; \XXX\to\PP^1
\end{eqnarray} 
which fits into the commutative diagram
\begin{equation} \label{firstdiagram}
\begin{xy}
\xymatrix{\XXX   \ar[r]^{2:1} \ar[d]_{\tilde{\elpi}} & \WW \ar[d]_{\elpi} \\
\PP^1  \ar[r]^{2:1}               & \PP^1 
}
\end{xy}
\end{equation}
The bottom row degree-2 morphism
\begin{eqnarray}
\label{eq:base-change}
\PP^1\stackrel{2:1}{\longrightarrow}\PP^1
\end{eqnarray}
ramifies exactly in the points below the multiple fibres.
Moreover, the universal covering induces 
a  primitive embedding
\[
U(2) + E_8(2) \cong\pi^*\Num(\WW) \hookrightarrow \Pic(\XXX)
\]
which lends itself to a study of K3 surfaces with the above lattice polarisation.
Abstractly,
a complex K3 surface $Y$ admits an Enriques involution
if and only if there is a primitive embedding of $U(2) + E_8(2)$ into $\Pic(Y)$ without perpendicular roots
(i.e.~classes of smooth rational curves) by \cite{Keum-Kummer}.
In view of this, it is evident that a bisection $R$ of square $R^2=0$ occurs generically,
since on the contrary any  $(-2)$-curve on $\WW$ necessarily splits into two disjoint 
smooth rational curves on the K3 cover $\XXX$;
these give sections of \eqref{eq:K3-ell}, causing the Picard number to go up to 11 at least.
The same generic behaviour will occur on our families $\Ft, \Ff$ in Section \ref{s-2fam}.

On the other hand, we can consider the Jacobian fibration of \eqref{eq:pi}.
This will be a rational elliptic surface 
\begin{eqnarray}
\label{eq:RES}
\RES\to \PP^1
\end{eqnarray} 
with section
and  is thus governed by means of explicit classifications, e.g. using the theory of Mordell-Weil lattices in \cite{OS}.
Naturally $\WW$ and $\RES$ share the same singular fibers,
except that on $\WW$,
 smooth or semi-stable fibers (Kodaira type $I_n, n\geq 0$) may come with multiplicity two.
The Enriques surface $\WW$ can be recovered from $\RES$ through a logarithmic transformation
which depends on the choice of non-trivial 2-torsion points 
in two distinct smooth or semi-stable fibers of \eqref{eq:RES}  (see e.g. \cite[\S~1.6]{fm}).
Intrinsically this leads to another K3 surface
in terms of the jacobian elliptic fibration
arising from \eqref{eq:RES} through the quadratic base change \eqref{eq:base-change}
ramified in the two distinct fibers 
where the logarithmic transformation changed the multiplicities of fibers.
It is clear from the construction,
that at the same time this K3 surface features as the Jacobian of \eqref{eq:K3-ell}.
That is, we get another commutative diagram
\begin{equation}
\begin{xy} \label{seconddiagram}
\xymatrix{\Jac(\XXX)   \ar[r]^{2:1} \ar[d] & X \ar[d] \\
\PP^1  \ar[r]^{2:1}               & \PP^1   }
\end{xy}
\end{equation}
Recall that  the depicted elliptic fibrations on $\XXX$ and  $\Jac(\XXX)$ 
share the same configurations of
singular fibers 
and the same Picard numbers. 

For some purposes, the above construction has the drawback
of being analytical in nature.
This can be circumvented in the special situation
where the elliptic fibration \eqref{eq:K3-ell} is already jacobian,
i.e. admits a section.
For instance, this occurs in the presence of a bisection $R$ of \eqref{eq:pi} with square $R^2=-2$
as indicated above.
A more general framework for this to occur was introduced 
in terms of involutions of base change type in \cite{hm1}.
Here one considers the quadratic twist $\RES'$ of $\RES$
which acquires $I_n^*$ fibres ($n\geq 0$) at the two ramification points of \eqref{eq:base-change}.
In consequence, the quadratic base change \eqref{eq:base-change} applied to either $\RES$ and $\RES'$
gives the same K3 surface $\XXX$.

For any section on $\RES'$, the pull-back to $\XXX$ is anti-invariant with respect to the involution $\imath$ on $\XXX$
induced by the deck transformation of \eqref{eq:base-change} (s.t. $\XXX/\imath = \RES$).
It follows that $\imath$ composed with translation by the anti-invariant section
defines another non-symplectic involution on $\XXX$.
This has fixed points, necessarily in the ramified fibers, if and only if the section meets the identity components of the 
two twisted fibres on $\RES'$.
Otherwise, for instance if the section is two-torsion,
we obtain an Enriques involution on $\XXX$
which we will refer to as an involution of base change type.

\subsection{Picard-Lefschetz reflections}

Recall that by Kodaira's work \cite{K},
the irreducible components of a singular fibre of an elliptic fibration correspond to an extended Dynkin diagram;
a Dynkin diagram, or equivalently root lattice, can be obtained 
from the singular fiber by omitting any simple component. 
Given A$_2$-configurations,
it is thus natural to ask whether these correspond to rational curves
supported on the fibres of an elliptic fibration on $\WW$.
While this may not be true in general,
we can weaken the limitations
by considering the question up to isometries of $\mbox{H}^2(\WW,\ZZ)$.
This will allow us to reduce the problem of 3-divisible sets of A$_2$-configurations
to the study of certain elliptic fibrations on Enriques surfaces.
To this end, 
recall that 
each $(-2)$-class $E$ in $\mbox{H}^2(\WW,\ZZ)$
defines a {\sl Picard-Lefschetz reflection} 
$$
\mbox{s}_{\FF} \, : \,  \mbox{H}^2(\WW,\ZZ) \ni D \mapsto D + (D.\FF) \FF \in \mbox{H}^2(\WW,\ZZ).
$$
In general, such a reflection does not act effectively on divisors,
but the situation changes drastically when restricted to smooth rational curves.
Namely, if $E$ and $E'$ are both represented by a smooth rational curve,
then
\begin{eqnarray}
\label{eq:s(E)}
\;\;\;\;\;\;\;\;\;
\mbox{s}_E(E') \text{ is \; either effective (if } E\neq E' \text{) \; or anti-effective (if } E=E').
\end{eqnarray}
In the sequel we will use the following corrected version of  
\cite[Claim~3.5.1]{kz} (which included neither  the configurations \eqref{eq-ppss}
nor the degenerate case of \eqref{eq-nppss}, cf. also Remark \ref{rem:KZ}). 

\begin{lemm} \label{lem-1-2} 
There exists a half-pencil $\HE$ on $\WW$
and smooth rational curves $E_1, \ldots , E_k \subset \WW$
such that the image of  each curve $\FFb'_j$, $\FFb_j''$, where $j = 1, \ldots,4$, 
under the isometry
\begin{equation}  \label{eq-plenr}
 \PLpsi_{\WW} := (s_{E_k} \circ    \ldots \circ s_{E_1})
\end{equation}
is, up to some multiple of $\HE$, the class of a smooth rational curve which is an irreducible component of a member  of the pencil $|2 \HE|$.
Moreover, the elliptic fibration given by  $|2 \HE|$ is either of the type 
\begin{equation} \label{eq-ppss}
I_3^4, I_3^3 \oplus 2 I_3,   I_3^2 \oplus (2 I_3)^2
\end{equation}
 or of the type 
\begin{equation} \label{eq-nppss}
IV^{*} \oplus I_3 \oplus I_1,   IV^{*} \oplus 2I_3 \oplus I_1,    IV^{*} \oplus I_3 \oplus 2I_1, IV^{*} \oplus 2I_3 \oplus 2I_1, IV^*\oplus IV.
\end{equation}
\end{lemm}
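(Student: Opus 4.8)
The plan is to follow the strategy of \cite[$\S$~3]{kz}, first using lattice theory to produce the half-pencil $\HE$ together with the Picard--Lefschetz isometry $\PLpsi_\WW$, and then classifying the possible configurations of singular fibres, while taking care of the degenerate cases omitted in \cite{kz}. To produce $\HE$, put $M=\langle\FFb'_1,\dots,\FFb''_4\rangle\cong A_2^{\oplus 4}$ and let $\overline M$ be its primitive closure in $\Num(\WW)=U+E_8$, so that $[\overline M:M]\in\{3,3^2\}$, equivalently $d(\overline M)\in\{3^2,1\}$, by Lemma~\ref{lem:M}. Since $\overline M$ is negative definite of rank $8$ inside the hyperbolic lattice $\Num(\WW)$ of rank $10$, its orthogonal complement $\overline M^\perp$ has signature $(1,1)$ and discriminant $d(\overline M^\perp)=d(\overline M)$; as $d(\overline M)$ is a perfect square, $\overline M^\perp$ represents $0$, so choosing $h$ appropriately among its isotropic classes gives a primitive isotropic class $h\in\overline M^\perp$, orthogonal to all of $\FFb'_1,\dots,\FFb''_4$. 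Because the nef cone of $\WW$ is a fundamental domain for the Weyl group generated by the reflections in the classes of smooth rational curves, there is a finite product $\PLpsi_\WW=s_{E_k}\circ\dots\circ s_{E_1}$ of such reflections carrying $h$ to a nef class $\HE:=\PLpsi_\WW(h)$; being primitive, isotropic and nef, $\HE$ is a half-pencil, so $|2\HE|$ is a genus-one pencil on $\WW$ \cite{bpv}.

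Next I would show that the eight configuration curves become vertical up to multiples of $\HE$. Since $\PLpsi_\WW$ is an isometry, each of $\PLpsi_\WW(\FFb'_j)$, $\PLpsi_\WW(\FFb''_j)$ is a $(-2)$-class orthogonal to $\HE$; tracking these through the reflections $s_{E_i}$ and using the effectivity dichotomy \eqref{eq:s(E)}, one checks that after adding a suitable integral multiple of $\HE$ each of them becomes effective, hence supported on the fibres of $|2\HE|$. The substantive point is that each such class is then the class of an \emph{irreducible} fibre component, not a proper sum of vertical $(-2)$-curves. This is forced because the eight classes span, modulo $\ZZ\HE$, a copy of $A_2^{\oplus 4}$ inside $\HE^\perp/\ZZ\HE\cong E_8$ of index $[\overline M:M]\in\{3,3^2\}$ which --- together with the effectivity just established and with $\PLpsi_\WW(\FFb'_j).\PLpsi_\WW(\FFb''_j)=1$ --- leaves no room for a splitting. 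So each $\FFb'_j$, $\FFb''_j$ is carried, up to a multiple of $\HE$, onto an irreducible component of a member of $|2\HE|$.

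It then remains to list the possible singular-fibre configurations of $|2\HE|$. Let $R=\bigoplus_v R_v$ be the direct sum, over the reducible fibres $F_v$, of the root lattice obtained by deleting one component from $F_v$. On the one hand $R$ embeds into $\HE^\perp/\ZZ\HE\cong E_8$, so $\operatorname{rk} R\le 8$; on the other hand $R$ contains, modulo $\ZZ\HE$, the rank-$8$ lattice $A_2^{\oplus 4}$ from the previous paragraph, hence $\operatorname{rk} R=8$ and $A_2^{\oplus 4}\subseteq R$ with finite index. Since $d(A_2^{\oplus 4})=3^4$ is a power of $3$, every summand $R_v$ is a root lattice of discriminant a power of $3$, i.e.\ one of $A_2$, $A_8$, $E_6$, $E_8$; inspecting the extended Dynkin diagrams, only $A_2$ and $E_6$ carry the required disjoint $A_2$-subconfigurations among the fibre components, so the reducible fibres of $|2\HE|$ are of Kodaira type $I_3$, $IV$ or $IV^*$, a $IV^*$ fibre accommodating up to three of the configurations and each $I_3$ or $IV$ fibre exactly one. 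Combining this with the Euler-number identity $e(\WW)=12$ (using $e(I_3)=e(IV)=3$, $e(IV^*)=8$, $e(I_1)=1$, and that the two multiple fibres of an Enriques elliptic pencil are of type $I_n$, $n\ge 0$), with the disjointness of the four configurations, and with the constraint $d(\overline M)\in\{3^2,1\}$, a short case analysis leaves exactly the lists \eqref{eq-ppss} and \eqref{eq-nppss}; the notation $2I_3$, $2I_1$ records which singular fibres are multiple.

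I expect the main obstacle to be the irreducibility claim in the second step: a priori a $(-2)$-class orthogonal to the half-pencil $\HE$ need not be a fibre component modulo $\ZZ\HE$ --- indeed $A_2^{\oplus 4}$ embeds into $E_8\cong\HE^\perp/\ZZ\HE$ with index $9$ --- so one genuinely has to combine the index information of Lemma~\ref{lem:M} with the dichotomy \eqref{eq:s(E)}, choosing the reflections $s_{E_i}$ with care, to prevent any $\PLpsi_\WW(\FFb'_j)$ from degenerating into a sum of vertical curves. Handling this, and in particular picking up the degenerate configuration $IV^*\oplus IV$ and the configurations involving a multiple fibre $2I_3$, is exactly what was missing in \cite[Claim~3.5.1]{kz}.
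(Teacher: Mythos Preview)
Your outline matches the paper's in the first step (producing the isotropic class and reflecting it to a half-pencil), but the logical order of the remaining two steps is reversed relative to the paper, and this reversal creates a genuine circularity.

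\medskip

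\textbf{The circularity.} Your exclusion of $II^*$ fibres in step~3 rests on the claim that ``only $A_2$ and $E_6$ carry the required disjoint $A_2$-subconfigurations \emph{among the fibre components}''. This is correct as a combinatorial statement about the $\tilde E_8$ Dynkin diagram, but it presupposes that each $\PLpsi_\WW(\FFb'_j),\PLpsi_\WW(\FFb''_j)$ is already an irreducible fibre component---precisely the irreducibility claim of step~2 that you flag as the main obstacle. Lattice-theoretically $A_2^{\oplus4}$ does embed in $E_8$, so without irreducibility you cannot exclude a $II^*$ fibre. Conversely, your proposed fix for irreducibility (``index information of Lemma~\ref{lem:M} together with the dichotomy~\eqref{eq:s(E)}'') does not work in general: already in an $I_3$ fibre the pair $\tilde D_1=\Theta_0+\Theta_1$, $\tilde D_2=\Theta_0+\Theta_2$ gives effective $(-2)$-divisors with $\tilde D_1.\tilde D_2=1$, so neither the index bound nor the intersection number forces irreducibility. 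Inside a $II^*$ fibre the situation is worse, and no choice of reflections in the $E_i$ alone will prevent such splittings.

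\medskip

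\textbf{How the paper proceeds instead.} The paper breaks the circle by separating the two issues. First, effectivity of the $\PLpsi_\WW(\FFb'_j)$ (up to sign) is established not by tracking the dichotomy~\eqref{eq:s(E)} through the reflections---which fails because after one reflection the class need no longer be irreducible---but by passing to the K3 cover and invoking Riemann--Roch there (Claim~\ref{claim}). This yields only that the images are (anti-)effective divisors supported on fibres, so the Jacobian is an extremal rational elliptic surface and the classification of \cite{MP} gives the five candidates $I_3^4$, $IV^*\oplus I_3\oplus I_1$, $IV^*\oplus IV$, $II^*\oplus I_1^2$, $II^*\oplus II$. The $II^*$ cases are then excluded by a separate, rather delicate argument (Claim~\ref{claim'}): one produces a second half-pencil $H'$, shows that the images must lie in the common $E_8$ of the two $II^*$ fibres, and derives a $3$-divisible set of three $A_2$'s on the K3 cover, contradicting \cite[Lemma~1]{b}; a degenerate subcase is handled via Kond\=o's classification of Enriques surfaces with finitely many $(-2)$-curves. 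Only after $II^*$ is eliminated does the paper establish irreducibility, and it does so fibre-by-fibre for the remaining small types $I_3$, $IV$, $IV^*$ by explicit further reflections (the ``Tools''~\ref{tool1}--\ref{tool3}).

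\medskip

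In short: your plan would succeed if irreducibility could be proved before the fibre classification, but the argument you sketch for it does not do this, and the paper's experience suggests it cannot be done so cheaply---the exclusion of $II^*$ genuinely requires an independent argument on the K3 cover.
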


\begin{proof}
We argue with the lattices $M, M^\perp\subset\Num(S)$ from \ref{ss:A_2}.
Since $M$ has discriminant $81$ and $M^\perp$ is hyperbolic of rank $2$, $M^\perp$ represents zero.
Thus there is an isotropic class $\HE_0\in M^\perp$
which we may assume to be primitive in H$^2(S,\ZZ)$.
By Riemann-Roch,
either $\HE_0$ or $-\HE_0$ is effective,
so let us assume the former.
Following \cite[Lemma~VIII.17.4]{bpv},
it remains to subtract the base locus of $|2\HE_0|$
to derive an elliptic fibration.
This precisely amounts to a composition 
\begin{equation} \label{eq-defppzero} 
\mathfrak p_0 = (s_{E_l} \circ    \ldots \circ s_{E_1})
\end{equation} 
of reflections in  smooth rational curves $E_1, \ldots, E_l$
(each meeting the image of $\HE_0$ under the previous reflections negatively).
By construction, we obtain the half-pencil $\HE:=\mathfrak p_0(\HE_0)$
such that $|2 \HE|$ induces an elliptic fibration on $S$.

To study the impact of the isometry $\mathfrak p_0$ on the $(-2)$-classes $F_j', F_j''$,
the following generalization of \eqref{eq:s(E)} enters crucially:

\begin{Claim}
\label{claim}
For $j=1,\hdots,4$
the class $\mathfrak p_0(F_j')$ (resp. $\mathfrak p_0(F_j'')$)  is an effective or anti-effective divisor
supported on components of a singular fiber of  the elliptic pencil $|2 \HE|$. 
\end{Claim}

\noindent
In order to simplify the exposition of the proof of the lemma, the proof of Claim~\ref{claim} will be given, also for later use, in \ref{ss:PL-K3}.

Continuing the proof of Lemma \ref{lem-1-2}, we infer  from Claim \ref{claim} that singular fibers of the elliptic pencil $|2 \HE|$
contain  4 disjoint A$_2$-configurations
(given by effective or anti-effective $(-2)$-divisors,
but not necessarily (yet) by irreducible curves).
As explained in \ref{ss:ell},
the jacobian fibration of
$|2 \HE|$ is a rational elliptic surface $\RES$. 
As it shares the 4 disjoint A$_2$-divisor configurations in the fibers,
$\RES$ is automatically extremal by the Shioda-Tate formula \cite[Cor. 6.13]{shioda-schuett}, 
i.e. $\RES$ has finite Mordell-Weil group.
Going through the classification in \cite{MP},
one finds that $\RES$ may have the following configurations:
\begin{eqnarray}
\label{eq:configs}
I_3^4, \; IV^{*} \oplus I_3 \oplus I_1, \; IV^*\oplus IV, \; II^*\oplus I_1^2,\; II^* \oplus II.
\end{eqnarray}
Note that the configurations \eqref{eq-ppss} and \eqref{eq-nppss} from Lemma \ref{lem-1-2} 
correspond to the first three entries in \eqref{eq:configs}, with fiber multiplicities.
In order to complete the proof of Lemma \ref{lem-1-2}, we shall now 
prove all claims for the first three configurations above
before ruling out the last two configurations  from \eqref{eq:configs}.

Before going into the details, recall that for any root lattice $R$,
any two roots are equivalent under reflections.
Naturally this extends to the extended Dynkin diagrams $\tilde R$:

\begin{Tool}
\label{tool:roots}
Any two roots in $\tilde R$ are equivalent under reflections.
\end{Tool}

(Note that this holds true even though $\tilde R$ contains infinitely many roots
-- but only finitely many modulo the primitive isotropic vector).

\medskip

Consider the  configurations \eqref{eq-ppss} and \eqref{eq-nppss} from Lemma \ref{lem-1-2}.
To complete the proof, we have to 
show that there are reflections
in fiber components of $|2 \HE|$
such that the composition of all reflections takes each curve $F_j', F_j''$
to a single smooth rational curve, up to 
a multiple of $\HE$.
To see this, fix $D=\mathfrak p_0(F_j')$ or $\mathfrak p_0(F_j'')$ for some $j=1,\hdots,4$.
We claim that there is an integer $n\in\ZZ$ and a divisor $\tilde{D}$
such that 
\begin{eqnarray}
\label{eq:DED'}
D = nH + \tilde{D} \;\; \text{ and } \;\; 0 < \tilde{D} < 2 \HE.
\end{eqnarray}
This can be seen without difficulty because fiber components generate a semi-negative-definite lattice.
Indeed, if there were some $n\in\ZZ$ such that
\[
D = n \HE + \tilde{D} - \hat{D}
\]
with $\tilde{D}, \hat{D}$ both effective and supported on distinct fiber components,
then by construction
\[
-2 = D^2 = \tilde{D}^2 - 2\tilde{D}.\hat{D} + \hat{D}^2.
\]
Since all entries on the RHS are non-positive even integers,
we deduce that either $\tilde{D}$ or $\hat{D}$ has square zero,
hence equals some fiber multiple.
Upon subtracting or adding the fiber class $2 \HE$,
we thus obtain the representation \eqref{eq:DED'} of $D$.
Note that in particular $\tilde{D}$ is supported on a single fiber
(and naturally $\tilde{D}$ and $n$ can be chosen such that $0<\tilde{D}<\HE$
if $\tilde{D}$ is supported on a multiple fiber), so we can now 
complete the proof of Lemma \ref{lem-1-2} fiber by fiber.
In particular, we only have to distinguish two cases.

If the fiber 
has Kodaira type $I_3$ or $IV$,
with components $\Theta_0, \Theta_1, \Theta_2$ meeting each other transversally,
then up to permutations of components,
the only possibilities for a configuration given by effective $(-2)$-divisors $\tilde{D}_1, \tilde{D}_2<2H$
(or $<H$ if the fiber has multiplicity $2$) 
such that  $\tilde{D}_1.\tilde{D}_2=1$
are easily determined as
\[
\tilde{D}_1=\Theta_1, \tilde{D}_2=\Theta_2 \;\; \text{ and } \;\; \tilde{D}_1=\Theta_0+\Theta_1, \tilde{D}_2=\Theta_0+\Theta_2.
\]
Since the second configuration is obtained from the first by reflection in $\Theta_0$,
the remaining statement of Lemma \ref{lem-1-2} holds on fibers of type $I_3$ and $IV$.

\medskip

Suppose the fiber in question has Kodaira type $IV^*$,
thus supporting 3 disjoint A$_2$-type configurations   given by the divisors $\tilde{D}$ 
obtained from $\mathfrak p_0(F_1')$,$\ldots$, $\mathfrak p_0(F_3'')$ as in \eqref{eq:DED'}.
Let us write the type $IV^*$ fiber as
\begin{equation} \label{eq-myfiber}
\Theta_1 + 2 \Theta_2 + \Theta_3 + 2 \Theta_4 + \Theta_5 + 2 \Theta_6 + 3 \Theta_0
\end{equation}
where $\Theta_{2i}$ meets exactly $\Theta_{2i-1}$ and $\Theta_0\; (i=1,2,3)$.
Recall that an additive fiber on an Enriques surface cannot be multiple. 
%
%
%
%
%
%

Start with the root $D_1=\mathfrak p_0(F_1')$.
By Tool \ref{tool:roots},
there is a composition of reflections $\mf_1'$ such that $\mf_1'(D_1) =\Theta_1$ as claimed.

%
%

Let $\mf_1=\mf_1'\circ\mf_0$ and  $\tilde{D}_{2}$  denote the effective divisor given by the decomposition of   $\mathfrak p_1(F_1'')$ 
defined in \eqref{eq:DED'}.
Since $\tilde D_2.\Theta_1=1$ and $0\leq \tilde D_2<2H$, we infer that $\Theta_1\not\subseteq \mbox{supp}(D_2)$
while $\Theta_2$ appears with multiplicity $1$ in $\tilde D_2$.
We claim that there are reflections in $\Theta_3,\hdots,\Theta_6, \Theta_0$ exclusively,
taking $\tilde D_2$ to $\Theta_2$.
To see this, we refer to the following more general property
which will be useful in the sequel, too.

\begin{Tool}
\label{tool:reflect}
Let $v$ be a root in a root lattice $R$
which contains the vertex $e$ with multiplicity one.
Then there is a composition $\mathfrak p$ of reflections in the other vertices of $R$
such that $\mathfrak p(v)=e$.
\end{Tool}

\begin{proof}
Denote the vertices of the Dynkin diagram of $R$ by $e_1,\hdots,e_n$
and write
\[
v=\sum_{j=1}^n a_j e_j \;\;\; (a_j\in\ZZ).
\]
Here $e=e_i$, say, and $a_i=1$ by assumption.
Since the roots in $R$ are always effective or anti-effective,
we infer $v\geq 0$ from $a_i$.
Since $v^2=-2$, there is some $j$ such that $v.e_j<0$.
If $j\neq i$, then the reflection $s_{e_j}$ reduces the complexity
of the root (measured in terms of $\sum_j a_j\geq 0$),
so we may continue with the root $s_{e_j}(v)\geq 0$ instead of $v$.

Assume that at some point during this process,
we have
\[
v.e_j\geq 0 \;\;\; \forall \, j\neq i.
\]
We claim that this implies $v=e_i$.
To see this, we compute
\[
-2 = v^2 = v.\sum_{j=1}^n a_je_j = -2 + \sum_{j\neq i} \underbrace{a_jv.e_j}_{\geq 0}.
\]
More precisely, the summands on the right-hand side are all zero if and only if $a_j=0$ for all $j$
with $e_j$ adjacent to $e_i$.
But then, since roots always have connected support, we infer that all $a_j$ for $j\neq i$ are zero as claimed.

In summary, we can apply reflections away from $e=e_i$ (reducing the complexity and preserving
effectivity) until $v$ is mapped to $e$ as stated.
\end{proof}

\begin{rem}
If  $e$ has coefficient $-1$ in the root $v\in R$,
then one can show analogously that reflections away from $e$ map $v$ to $-e$.
\end{rem}

Applied to $\tilde D_2$, we deduce that there is a reflection $\mathfrak p_2'$
such that
\[
\mathfrak p_2'(\tilde D_2) = \Theta_2, \;\;\; \mathfrak p_2'(\Theta_1) =\Theta_1.
\] 
That is, $\mf_2=\mf_2'\circ\mf_1$ maps $F_1', F_1''$ to $\Theta_1, \Theta_2$ up to multiples of $H$.

%
%

The same kind of reasoning applies to $F_2',\hdots,F_3''$ to show that
a composition of reflections
(in $\Theta_3,\hdots, \Theta_6$ only!) maps their image under $\mf_2$,
up to multiples of $H$,
to the fiber components $\Theta_3,\hdots, \Theta_6$.
The details are omitted for shortness.
This proves Lemma \ref{lem-1-2} for the first three fiber configurations from \eqref{eq:configs}.

\medskip

We now turn to the last two fiber configurations from \eqref{eq:configs}.
Here the configuration of 4 A$_2$'s is supported on a single fiber of Kodaira type $II^*$.
We shall seek to establish a contradiction to Lemma \ref{lem:l=3}, 
using Tools \ref{tool:roots}, \ref{tool:reflect}.

%

By Lemma \ref{lem:M}, there is a configuration of 3 A$_2$'s involving a 3-divisible class,
say
\[
\sum_{j=1}^{3} (\FFb'_{j} - \FFb''_{j})=3D.
\]
We start by embedding the remaining A$_2$-summand into the $II^*$ fiber
whose components we label as follows:

\begin{figure}[ht!]
\setlength{\unitlength}{.8mm}
\begin{picture}(100,30)(20,0)
\multiput(3,8)(20,0){8}{\circle*{2}}
\put(3,8){\line(1,0){140}}
\put(2,1){$e_2$}
\put(22,1){$e_3$}
\put(42,1){$e_4$}
\put(62,1){$e_5$}
\put(82,1){$e_6$}
\put(102,1){$e_7$}
\put(122,1){$e_8$}
\put(43,8){\line(0,1){20}}
\put(43,28){\circle*{2}}
\put(47,27){$e_1$}
\put(142,1){$e_9$}
\end{picture}
\end{figure}

We put  $D':=\mathfrak p_0(F_4')$ (resp. $D'' :=\mathfrak p_0(F_4''))$  
to denote the two  $(-2)$-divisors of the remaining A$_2$, both supported on the singular fiber
(effective or anti-effective by Claim \ref{claim}). 
By Tool \ref{tool:roots}, there is a composition $\mathfrak p_1'$ of reflections
mapping $D'$ to $e_9$.
Decomposing 
\[
\mathfrak p_1'(D'') = \tilde D + mH \;\;\; (0\leq \tilde D<2H)
\]
as before, we infer from the intersection number with $\mathfrak p_1'(D')=e_9 $ that $e_9\not\in\mbox{supp}(\tilde D)$
while $e_8$ has multiplicity one in $\tilde D$.
I.e.~$\tilde D\in E_8 = \tilde E_8\setminus\{e_9\}$,
and by Tool \ref{tool:reflect}, there is a composition $\mathfrak p_2'$ of reflections in $e_1,\hdots,e_7$
mapping $\tilde D$ to $e_8$.

It follows that  $\mathfrak p_2 = \mathfrak p_2'\circ\mathfrak p_1'\circ\mathfrak p_0$
maps
 $F_1'$, $\ldots$, $F_3''$  to effective or anti-effective $(-2)$-divisors in
the orthogonal complement of $\langle e_8, e_9\rangle$ inside the extended Dynkin diagram $\tilde E_8$
which is given by
\[
\langle e_8, e_9\rangle^\perp = \langle e_1,\hdots, e_6, 2H\rangle \cong \tilde E_6.
\]
In consequence, the above analysis of the $IV^*$ case applies verbatim to show that, after a suitable composition $\mathfrak p$
of reflections (in $e_1,\hdots,e_6$) and up to fiber multiples, 
the three A$_2$'s in question can be realized as
\begin{equation} \label{eq-fourforII}
\langle e_2, e_3\rangle \oplus \langle e_5,e_6\rangle \oplus \langle e_1, -(2e_1+e_2+2e_3+3e_4+2e_5+e_6)\rangle.
\end{equation}
Indeed, after mapping $\langle F_2',F_2'' \rangle$, $\langle F_3',F_3'' \rangle$ to 
$\langle e_2, e_3\rangle$, $\langle e_5,e_6\rangle$
and $F_1'$ to $e_1$, 
we solve the system of equations given 
by the fact that 
$\mathfrak p(F_1'')$ is a $(-2)$-divisor with prescribed  intersection pattern. 
This gives exactly the above solution modulo $2H$.
%
%
%

Apparently \eqref{eq-fourforII} does not give the signs of the six classes  $D_1'$, $\ldots$, $D_3''$ in the $3$-divisible divisor $\mathfrak p(3D)$.
Yet, since the  intersection numbers   $\mathfrak p(3D).e_j$ are multiples of $3$ for $j=1, \ldots,9$, 
one obtains only one possibility (up to sign and  a multiple of $(2\HE)$):
$$
(e_3-e_2) + (e_5-e_6) + (e_1+(2e_1+e_2+2e_3+3e_4+2e_5+e_6) = \pm 3  {\mathfrak p}(D)
$$
which yields 
\[
\mathfrak p(D) = \pm (e_1 + e_3 + e_4 + e_5)  + 2m \HE
\] 
for some $m\in\ZZ$.
In particular, $\mathfrak p(D)$ is either effective or anti-effective,
and applying $\mathfrak p^{-1}$, we infer the same for $D$ from Observation~~\ref{obs-strange}
(to be derived in \ref{ss:PL-K3}).
This contradicts Lemma~\ref{lem:l=3} and thus concludes the proof of Lemma \ref{lem-1-2}.
\end{proof}

\begin{rem}
\label{rem:3-torsion}
On the extremal rational elliptic surfaces,
the orthogonal A$_2$-configurations gives rise to $3$-torsion sections
by way of $3$-divisibility.
Essentially, this holds because H$^2(\RES,\ZZ)$ is unimodular.
Since the same applies to $\Num(\WW)$,
we will be able to establish the same results on $\WW$,
even though there is no section, see Lemma \ref{lem-f3333bigcode}.
\end{rem}

\begin{rem}
\label{rem:KZ}
The proof of \cite[Claim~3.5.1]{kz}
states that one can let go the fiber multiple in Lemma \ref{lem-1-2},
i.e.~
there is a composition of reflections $\mathfrak p_S$ such that
the image of each smooth rational curve $F_j', F_j''$ under $\mathfrak p_S$ is again represented by a smooth rational curve
(without possibly adding a multiple of $\HE$). 
Since we were not able to find a reference for this statement,
%
we decided to follow the advice of a referee and give a detailed proof of the weaker statement recorded in Lemma \ref{lem-1-2}
(which fortunately will be sufficient for our purposes).
\end{rem}


\subsection{Picard-Lefschetz reflections on the  K3-cover}
\label{ss:PL-K3}

In the last part of this section 
we study Picard-Lefschetz reflections on the  K3-cover $\XXX$ of $\WW$

Let $\proj : \XXX \rightarrow \WW$ be the K3-cover 
with induced elliptic fibration \eqref{eq:K3-ell} 
and let  $\deck \curvearrowright \XXX$ 
be the Enriques involution.
Given a smooth rational curve $\FF$ in $\WW$, 
  the preimage $\proj^{-1}(\FF)$ consists of two disjoint smooth rational curves 
$\FF^{+}, \FF^{-}$. 

With these preparations we proceed to the proof of Claim~\ref{claim}. 

\begin{proof}[Proof of Claim~\ref{claim}]
We maintain the notation of the proof of Lemma~\ref{lem-1-2}  (see \eqref{eq-defppzero}), and put 
$$
\mathfrak p_{0,Y} := (s_{E_l^{+}} \circ s_{E_l^{-}} \circ   
\ldots \circ s_{E_1^{+}}  \circ s_{E_1^{-}})  \, .
$$
This map is independent of the order of the elements of the pairs $E_i^+, E_i^-$
as we shall exploit below.
Let $D \in \mbox{Pic}(\WW)$. Observe that 
$(D.E_1) =  (\proj^{*}D.E_1^{+}) =  (\proj^{*}D.E_1^{-})$.
In particular, we have 
\begin{eqnarray}
(s_{E_1^{+}} \circ s_{E_1^{-}})(\proj^{*}D) & = &  \proj^{*}D
+ (\proj^{*}D.E_1^{+}) E_1^{+} +  (\proj^{*}D.E_1^{-}) E_1^{-} \nonumber \\
& =  &   \proj^{*}(D + (D.E_1)E_1) =  \proj^{*}(s_{E_1}(D)) \nonumber \, .
\end{eqnarray}
This yields  the equality 
\begin{eqnarray*}
\mathfrak p_{0,Y} \circ  \proj^{*} = \proj^{*} \circ  \, \mathfrak p_0. \label{eq-1-3} 
\end{eqnarray*} 
Similarly, one can show that
\begin{eqnarray*}
\mathfrak p_{0,Y} \circ  \deck^{*} = \deck^{*} \circ \mathfrak p_{0,Y} \, .  \label{eq-1-4}
\end{eqnarray*}  

Moreover, one has the equality
\begin{eqnarray}
\proj_{*}(\mathfrak p_{0,Y})(\FF^{+}) = \proj_{*}(\mathfrak p_{0,Y}(\FF^{-})) = {\mathfrak p_0}(\FF) \, .  \label{eq-1-5}
\end{eqnarray} 
Since $\mathfrak p_{0,Y}$ is an isometry, we have $\mathfrak p_{0,Y}(F_j'^{\pm})^2 = -2$. Therefore Riemann-Roch implies that either 
$|\mathfrak p_{0,Y}(F_j'^{\pm})| \neq \emptyset$ or $|-\mathfrak p_{0,Y}(F_j'^{\pm})| \neq \emptyset$. 
Suppose that $D' \in |\pm \mathfrak p_{0,Y}(F_j'^{\pm})|$. From
$(\mathfrak p_{0,Y}(F_j'^{\pm}).\proj^{*}\HE) = 0$, we infer that all components of $\mbox{supp}(D')$ are components of fibers of the fibration
$|\proj^{*}\HE|$. 
Claim~\ref{claim} follows now directly from \eqref{eq-1-5}. 
\end{proof}

Essentially, the above proof shows the following observation:

\begin{obs} \label{obs-strange}
If $D$ is an effective  $(-2)$-divisor on an Enriques surface, $\supp(D)$ consists of $(-2)$-curves, and $\mathfrak p$ is a composition of reflections in some $(-2)$-curves,
  then   $\mathfrak p(D)$ is either effective or anti-effective.
\end{obs}

Since the proof of  Lemma~\ref{lem-1-2} is now complete, we know that the map $\PLpsi_{\WW}$ (see \eqref{eq-plenr}) exists, and we  define 
\begin{equation} \label{eq-ppxxx}
\PLpsi_{\XXX} :=  (s_{E_k^{+}} \circ s_{E_k^{-}} \circ   
\ldots \circ s_{E_1^{+}}  \circ s_{E_1^{-}})  \, . 
\end{equation}
Obviously we have the equalities
$\PLpsi_{\XXX} \circ  \proj^{*} = \proj^{*} \circ \PLpsi_{\WW}$,  and   $\PLpsi_{\XXX} \circ  \deck^{*} = \deck^{*} \circ \PLpsi_{\XXX}$. \\
Moreover, it is immediate that
\begin{eqnarray*}
\proj_{*}(\PLpsi_{\XXX}(\FF^{+})) = \proj_{*}(\PLpsi_{\XXX}(\FF^{-})) = \PLpsi_{\WW}(\FF) \, .  
\end{eqnarray*} 
The latter implies using Zariski's Lemma 
that for $j=1, \ldots,4$, the divisor $\PLpsi_{\XXX}(\FFb_j'^{\pm})$ (resp. $\PLpsi_{\XXX}(\FFb_j''^{\pm})$)
is represented, up to sign, 
by a sum of 
smooth rational curves contained in a singular fiber of the elliptic fibration \eqref{eq:K3-ell} 
induced by $|\proj^{*}\HE|$ plus possibly a multiple of the general fiber of  $|\proj^{*}\HE|$.

In particular, $\XXX$ inherits 8 orthogonal A$_2$-configurations from $\WW$.
For later use, we label the curves 
$\FFb_j'^{\pm}$, $\FFb_j''^{\pm}$, 
in such a way that   
\begin{equation}  \label{eq-1-6}
\pi_*\PLpsi_{\XXX}(\FFb_j'^{\pm}) = \PLpsi_{\WW}\FFb_j'
 \;\,\mbox{ and } \;\, \pi_* \PLpsi_{\XXX}(\FFb_j''^{\pm}) = \PLpsi_{\WW}\FFb_j''
\;\,\mbox{ for } j = 1, \ldots, 4.
\end{equation}

In the sequel we will need the following simple observation. 
Suppose that for $j=1,\ldots,4$ the equalities
\begin{equation} \label{eq-useful-en}
\PLpsi_{\WW}\FFb_j' = \Theta_{2j-1} + n_{2j-1} \HE \quad \mbox{ and } \quad \PLpsi_{\WW}\FFb_j'' = \Theta_{2j} + n_{2j} \HE \, , 
\end{equation}
hold, where $\Theta_{2j-1}$, $\Theta_{2j}$ are components of singular fibers of $|2 \HE|$ and $n_{2j-1}, n_{2j} \in \ZZ$. Then, up to a relabelling of the 
rational curves  $\Theta_{2j-1}^{\pm}$, $\Theta_{2j}^{\pm}$, we have
\begin{equation} \label{eq-useful-K3}
\PLpsi_{\XXX}\FFb_j'^{\pm} = \Theta_{2j-1}^{\pm} + n_{2j-1}  \, \pi^{*}\HE \quad \mbox{ and } \quad \PLpsi_{\XXX}\FFb_j''^{\pm} =   \Theta_{2j}^{\pm} + n_{2j}  \, \pi^{*}\HE \, .
\end{equation}


\section{Two  families of Enriques surfaces with four cusps} \label{s-2fam}

In this section we construct families of Enriques surfaces with four disjoint A$_2$-configurations 
supported on the fibers of an elliptic fibration (following Lemma \ref{lem-1-2})
and study $3$-divisible sets on them.

\subsection{First family of Enriques surfaces}
\label{ss:1st}

Let $X_{3,3,3,3}$ be the extremal rational elliptic surface 
with four singular fibers  of the type $I_3$.
Locating them at
the third roots $\mu_3$ of $(-1)$ and at $\infty$,
the surface is given by the Hesse pencil
\[
X_{3,3,3,3}: \;\;\; x^3+y^3+z^3 + 3\lambda xyz = 0.
\]
Here the 3-torsion sections alluded to in Remark \ref{rem:3-torsion}
enter as the base points of the cubic pencil.
An Enriques surface is obtained from $X_{3,3,3,3}$ 
by applying logarithmic transformations of order $2$
 to the elliptic fibers over two distinct points $P_1, P_2  \in \PP^1$.
As explained in \ref{ss:ell},
this depends on the choice of
$2$-torsion points in the fibers of $\Xt$ over $P_1$, $P_2$.
However,  this subtlety will not cause us any trouble:

\begin{lemm}
\label{lem:Ft}
The Enriques surfaces obtained by a logarithmic transformation of order $2$ from $X_{3,3,3,3}$ as above
form an irreducible two-dimensional family $\Ft$.
\end{lemm}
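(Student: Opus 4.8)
The plan is to show that the construction produces a family parametrised by the choice of the two ramification points $P_1,P_2\in\PP^1$ together with the $2$-torsion data in the corresponding fibres, and then to check that this parameter space is irreducible of dimension two. First I would recall from \ref{ss:ell} that a logarithmic transformation of order $2$ on an elliptic fibre over a point $P$ requires the choice of a nonzero $2$-torsion point in that fibre; for the Hesse pencil $X_{3,3,3,3}$ the general fibre is a smooth elliptic curve, so its $2$-torsion subgroup is $(\ZZ/2\ZZ)^{\oplus 2}$ and there are exactly three nonzero choices. Thus the naive parameter space is an open subset of $(\PP^1)^2$ (the pairs of distinct points, avoiding the four $I_3$-fibres so that the transformed fibre is smooth) together with a finite $(3\times 3)$-cover recording the torsion choices in the two fibres. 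This space is a connected (indeed irreducible) quasi-projective variety of dimension two: the base $\{(P_1,P_2): P_1\neq P_2\}\subset(\PP^1)^2$ is irreducible of dimension two, and the finite cover given by the torsion points is connected because the monodromy of the $3$-torsion-free part of the $2$-torsion local system on the punctured base acts transitively on the nine ordered pairs of nonzero $2$-torsion points (the fundamental group of $\PP^1$ minus the four $I_3$-points plus $P_1,P_2$ surjects onto $\mathrm{SL}_2(\ZZ/2\ZZ)=S_3$ through the monodromy of $X_{3,3,3,3}$, which acts transitively on nonzero $2$-torsion vectors, and one can move $P_1$ and $P_2$ independently).

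Next I would argue that the resulting Enriques surfaces indeed form a family over this base, i.e. that the logarithmic transformation can be carried out in a relative/analytic family over the parameter space: this is the content of \cite[\S~1.6]{fm} (or \cite{fm} in general), since over a neighbourhood of a point of the base one has an analytically varying family of elliptic surfaces with a continuously varying choice of $2$-torsion multisection, and the logarithmic transformation is functorial in this data. Hence one gets a smooth surjective morphism from a two-dimensional irreducible base to the moduli of the Enriques surfaces so constructed, and in particular the image $\Ft$ is irreducible. To see that the dimension of $\Ft$ is genuinely two (rather than dropping), I would note that the generic member has Picard number $10$: indeed by the analysis in \ref{ss:ell} a generic Enriques surface in the family carries a bisection of square $0$ rather than $-2$ on the induced fibration, so the K3 cover has Picard number $10$ and the $4A_2$ polarisation together with the half-pencils spans a finite-index sublattice of $\Num(S)=U+E_8$; varying $P_1,P_2$ therefore genuinely varies the periods, and no positive-dimensional subfamily can be constant.

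The step I expect to be the main obstacle is the transitivity of the monodromy action needed for connectedness of the torsion cover: one must check that as $(P_1,P_2)$ varies over the (doubly punctured) base, the monodromy identifies all $3\times 3$ choices of nonzero $2$-torsion, i.e. that there is no locally constant invariant distinguishing, say, the three choices in the fibre over $P_1$. This follows once one knows the monodromy representation of the Hesse pencil on $2$-torsion is the full $\mathrm{SL}_2(\ZZ/2\ZZ)$ — equivalently that the four $I_3$-fibres do not impose a congruence obstruction modulo $2$ — which can be read off from the local monodromies ($I_3$ contributes a unipotent element $\begin{pmatrix}1&3\\0&1\end{pmatrix}\equiv\begin{pmatrix}1&1\\0&1\end{pmatrix}\pmod 2$, already of order $2$) and the fact that two non-commuting such unipotents generate $S_3$. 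Granting this, irreducibility of $\Ft$ and the count $\dim\Ft=2$ follow as above, completing the proof.
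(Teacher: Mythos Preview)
Your approach is essentially the same as the paper's: describe $\Ft$ as a $9$-sheeted cover of the configuration space of two points on $\PP^1$ and establish irreducibility via transitivity of the monodromy on the nonzero $2$-torsion choices. Your explicit computation of the local monodromy at an $I_3$ fibre (reducing $\begin{pmatrix}1&3\\0&1\end{pmatrix}$ modulo $2$) is a welcome addition that the paper leaves implicit. Two minor differences: the paper works over $\mbox{Sym}^2(\PP^1)\setminus\text{diagonal}$ rather than ordered pairs, and it does \emph{not} exclude the four $I_3$-points from the choice of $P_1,P_2$ --- the covering simply ramifies there, producing the multiple fibres $2I_3$ appearing in \eqref{eq-ppss}. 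Neither affects irreducibility.

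Your final paragraph, however, contains a genuine error. You assert that for a generic member ``the K3 cover has Picard number $10$''. This is false: the induced fibration \eqref{eq:K3-ell} on the K3 cover $\XXX$ carries eight $I_3$ fibres, so $\rho(\XXX)\geq 18$ by Shioda--Tate (cf.~Lemma~\ref{lem:324}). The Enriques surface itself of course has $\rho(\WW)=10$, but that is automatic and says nothing about variation of moduli. Fortunately this paragraph is not needed: the paper simply identifies $\Ft$ with the parameter space (the $9$-fold cover), so two-dimensionality is immediate from the base. If you wish to argue separately that the image in the moduli of Enriques surfaces is two-dimensional, the correct route is via $\rho(\XXX)=18$ generically together with the discriminant computation in Lemma~\ref{lem:324}, showing that the periods genuinely vary; but drop the claim $\rho(\XXX)=10$.
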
 

\begin{proof}
The moduli space $\Ft$ is a degree $9$ ramified covering of the configuration space
\begin{eqnarray}
\label{eq:covering}
\mbox{Sym}^2 (\PP^1) \setminus \text{diagonal}.
\end{eqnarray}
Thus $\Ft$ has dimension two.
Here the fiber above a non-ordered pair $\{P_1, P_2\}$ consists of all non-ordered pairs 
in
\[
(E_{P_1}[2]\setminus \{O\}) \times (E_{P_2}[2]\setminus \{O\})
\]
where $E_P$ denotes the fiber of $X_{3,3,3,3}$ at $P\in\PP^1$ with zero element $O$ and two-torsion subgroup $E_P[2]$.
It follows that the covering \eqref{eq:covering} ramifies exactly at the singular fibers.
Since the monodromy action of 
\[
\pi_1(\mbox{Sym}^2 (\PP^1\setminus (\mu_3\cup\{\infty\})) \setminus \text{diagonal})
\] 
on a general fiber is transitive,
the moduli space $\Ft$ is irreducible.
\end{proof}

This proves the first part of Theorem \ref{thm0}.
%
In view of Lemma \ref{lem:Ft}, 
we will allow ourselves to abuse notation and denote the resulting Enriques surface(s) simply by 
$\WW_{P_1,P_2}$. 
%
%

Now, let $\WW = \WW_{P_1,P_2} \in {\mathcal F}_{3,3,3,3}$ be an Enriques surface with  K3-cover $\XXX$
and elliptic fibrations $\varphi, \tilde\varphi$ in the notation of \ref{ss:ell}.
%
%
%
We continue by establishing some information about $\XXX$ with the help of $\Jac(\XXX)$.
As far as 
$P_1, P_2$ do not hit $\infty$ and third roots of $(-1)$
(i.e. outside the branch locus of \eqref{eq:covering}), 
we obtain
 eight fibers of the type  $I_3$ on $\XXX$ and $\Jac(\XXX)$, 
 so  the Picard number $\rho(\XXX)=\rho(\Jac(\XXX))$ is at least $18$ by the Shioda-Tate formula. 
 
 \begin{lemm}
 \label{lem:324}
 If $\rho(\XXX)=18$, then $\NS(Y)$ has discriminant 
 $\operatorname{d}(\mbox{NS}(\XXX)) = -324$.
 \end{lemm}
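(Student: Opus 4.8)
The plan is to pass to the jacobian elliptic K3 surface $\Jac(\XXX)$ and then translate the result back to $\XXX$ itself.

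First I would invoke the Shioda--Tate formula. Since the elliptic fibration on $\Jac(\XXX)$ --- the quadratic base change of the Hesse pencil $\Xt$ --- carries eight fibres of type $I_3$, its trivial lattice has rank $2+8\cdot 2=18$, so the hypothesis $\rho(\XXX)=\rho(\Jac(\XXX))=18$ forces $\MW(\Jac(\XXX))$ to be finite. This Mordell--Weil group contains the pull-back of $\MW(\Xt)\cong(\ZZ/3\ZZ)^{2}$, the $3$-torsion base points of the Hesse pencil (cf.\ Remark~\ref{rem:3-torsion}); conversely there is no further torsion, since any extra torsion section would be a torsion point of the generic Hesse cubic rational over the quadratic extension of $\CC(t)$ defining the base change, and --- more bluntly --- the quantity $\operatorname{d}(\NS(\Jac(\XXX)))=-3^{8}/|\MW_{\mathrm{tors}}|^{2}$ coming from Shioda's discriminant formula must be an integer. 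Hence $\MW(\Jac(\XXX))\cong(\ZZ/3\ZZ)^{2}$ and $\operatorname{d}(\NS(\Jac(\XXX)))=-3^{8}/3^{4}=-81$.

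Next I would compute $\operatorname{d}(\NS(\XXX))$ directly, keeping in mind that $\XXX$ is \emph{not} jacobian (the induced fibration $\tilde\varphi$ carries only a bisection $R$, generically of square $R^{2}=0$), so that $\NS(\XXX)\neq\NS(\Jac(\XXX))$. The $\deck^{*}$-invariant part of $\NS(\XXX)$ is $\proj^{*}\Num(\WW)\cong U(2)+E_{8}(2)$, of rank $10$; its orthogonal complement $W$ in $\NS(\XXX)$ is negative definite of rank $\rho(\XXX)-10=8$ and contains the anti-invariant classes $\FFb_{j}'^{+}-\FFb_{j}'^{-}$ and $\FFb_{j}''^{+}-\FFb_{j}''^{-}$, $j=1,\dots,4$, which span a copy of $A_{2}(2)^{\oplus 4}$. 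Each of the eight rational curves $\FFb_{j}'^{+},\FFb_{j}''^{+}$ equals $\tfrac12$ of a sum of a class in $\proj^{*}\Num(\WW)$ and a generator of $A_{2}(2)^{\oplus 4}$, hence contributes an order-$2$ glue vector between the two summands; one checks (using $\rho(\XXX)=18$, and \cite[Lemma~1]{b} to exclude any $3$-divisibility among the eight $A_{2}$-configurations on the K3 surface $\XXX$) that $\NS(\XXX)$ is precisely the resulting overlattice of index $2^{8}$ of $(U(2)+E_{8}(2))\oplus A_{2}(2)^{\oplus 4}$. Therefore
\[
\operatorname{d}(\NS(\XXX))=\frac{\operatorname{d}(U(2)+E_{8}(2))\cdot\operatorname{d}(A_{2}(2)^{\oplus4})}{(2^{8})^{2}}=\frac{(-2^{10})\cdot 2^{8}3^{4}}{2^{16}}=-2^{2}3^{4}=-324,
\]
and the value $-81$ obtained above for $\Jac(\XXX)$ confirms the computation.

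The point that will require real care --- the main obstacle --- is the claim that $\proj^{*}\Num(\WW)$ together with the eight curves of the four $A_{2}$-configurations already generate all of $\NS(\XXX)$ when $\rho(\XXX)=18$ (equivalently, that the orthogonal complement $W$ is exactly $A_{2}(2)^{\oplus 4}$ and that the glue is exactly the index-$2^{8}$ extension above); once this lattice-theoretic bookkeeping is settled the discriminant is a one-line computation, and the only other step needing an argument is the exclusion of excess Mordell--Weil torsion in the first paragraph.
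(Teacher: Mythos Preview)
Your first paragraph --- computing $\operatorname{d}(\NS(\Jac(\XXX)))=-81$ via Shioda--Tate and the pulled-back $3$-torsion --- matches the paper exactly. The divergence is in how you pass from $\Jac(\XXX)$ back to $\XXX$.

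The paper does not attempt your direct lattice computation. Instead it quotes Keum's result \cite[Lemma~2.1]{keum} (applicable because $\tilde\varphi$ has a bisection) to conclude $\operatorname{d}(\NS(\XXX))\in\{-81,-324\}$, the former holding iff $\tilde\varphi$ admits a section. It then appeals to a general fact proved immediately afterwards (Proposition~\ref{obs-PicK3cover}): for the K3-cover $\XXX$ of \emph{any} Enriques surface one has $2^{20-\rho(\XXX)}\mid\operatorname{d}(\NS(\XXX))$. With $\rho(\XXX)=18$ this gives $4\mid\operatorname{d}(\NS(\XXX))$, ruling out $-81$. That is the whole argument.

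Your direct route has a genuine gap precisely where you flag the ``main obstacle''. You build a rank-$18$ sublattice $L\subset\NS(\XXX)$ of discriminant $-324$, but to conclude $L=\NS(\XXX)$ you must exclude proper even overlattices (of index $2,3,6,\dots$). Your appeal to \cite[Lemma~1]{b} to ``exclude any $3$-divisibility among the eight A$_2$-configurations on $\XXX$'' is not correct as stated: Barth's lemma only says that a $3$-divisible set on a K3 surface consists of six or nine A$_2$'s, and in fact there \emph{are} $3$-divisible sextuplets among your eight configurations --- they are the pull-backs $\proj^*$ of the $3$-divisible triples on $\WW$ guaranteed by Lemma~\ref{lem-f3333bigcode} (compare Lemma~\ref{lem-f3333smallcode}). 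These particular divisibilities already live in $\proj^*\Num(\WW)$ and so do not enlarge $L$, but you have not argued that no \emph{other} enlargement occurs: neither that $W$ equals $A_2(2)^{\oplus4}$ on the nose, nor that the glue subgroup has order exactly $2^8$. And the value $-81$ for $\Jac(\XXX)$ cannot ``confirm'' anything about $\XXX$ without a result such as Keum's linking the two discriminants, which you never invoke. The paper's two-line route via \cite{keum} and Proposition~\ref{obs-PicK3cover} bypasses all of this bookkeeping.
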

 
 \begin{proof}
 By assumption, $\Jac(\XXX)$ has finite Mordell-Weil group.
 The configuration of singular fibers only accommodates 3-torsion,
 so we infer 
 \[
 \MW(\Jac(\XXX)) \cong (\ZZ/3\ZZ)^2
 \]
 by pull-back from $\Xt$.
 Hence 
 $\operatorname{d}(\mbox{NS}(\Jac(\XXX))) = -81$. 
By the existence of a bisection on $\XXX$
(induced from $\WW$, see \ref{ss:ell}), we infer from \cite[Lemma~2.1]{keum}  that  
\begin{equation} \label{eq-cover-3333}
\mbox{ either } \operatorname{d}(\mbox{NS}(\XXX)) =  - 81 \mbox{ or } \operatorname{d}(\mbox{NS}(\XXX)) = -324
\end{equation}
as soon as $\rho(\XXX) = 18$.
(Here the former equality holds iff  $\tilde{\elpi}$ admits a section
i.e. iff $\XXX=\Jac(\XXX)$.) 
Lemma \ref{lem:324} now
results immediately from the following proposition.
 \end{proof}
%

\begin{prop} \label{obs-PicK3cover}
Let $Y$ be the K3-cover of an Enriques surface. 
Then 
\[
2^{20-\rho(Y)}\mid \operatorname{d}(\mbox{NS}(Y)).
\]
In particular, if $\operatorname{d}(\mbox{NS}(Y))$ is odd, then
$\rho(Y) = 20$.
\end{prop}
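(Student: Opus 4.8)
The goal is to show $2^{20-\rho(Y)}\mid d(\NS(Y))$ for $Y$ the K3-cover of an Enriques surface $S$. The natural approach is to exploit the Enriques involution $\psi$ on $Y$ and its action on cohomology. First I would recall that $\pi^*\Num(S)\cong U(2)+E_8(2)$ embeds primitively into $\NS(Y)$; this is the $\psi^*$-invariant part of $\NS(Y)$, since $H^2(S,\ZZ)_{\mathrm{free}}$ pulls back (after dividing by the $2$-torsion) into the invariant sublattice and the anti-invariant part of $H^2(Y,\ZZ)$ is known to be $U\oplus U(2)^2\oplus E_8(2)$ by the classical computation of Enriques involutions (e.g. via the eigenlattice decomposition in Barth--Peters--Van de Ven or Nikulin). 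The key structural fact is that on the whole of $H^2(Y,\ZZ)$ — which is unimodular of signature $(3,19)$ — the involution $\psi^*$ has invariant lattice $U(2)+E_8(2)$ and anti-invariant lattice $M^-:=U+U(2)+U(2)+E_8(2)$ (or an isometric copy), and crucially $\psi^*$ acts as $+1$ on $T:=\NS(Y)^\perp\cap H^2(Y,\ZZ)$... no: on the transcendental part things are more subtle. Let me reorganize.

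**Cleaner route via the discriminant form.** The cleanest plan is the following. Set $\rho=\rho(Y)$, so the transcendental lattice $T_Y=\NS(Y)^\perp$ has rank $22-\rho$. Since $H^2(Y,\ZZ)$ is unimodular, $d(\NS(Y)) = \pm d(T_Y)$ and $A_{\NS(Y)}\cong A_{T_Y}$ (opposite quadratic forms). So it suffices to show $2^{20-\rho}\mid d(T_Y)$, equivalently that the discriminant group $A_{T_Y}$ of the rank-$(22-\rho)$ lattice $T_Y$ has a quotient of order divisible by $2^{20-\rho}$ — in fact I will show $A_{T_Y}$ contains $(\ZZ/2\ZZ)^{20-\rho}$ as a subgroup, which forces $2^{20-\rho}\mid|A_{T_Y}| = |d(T_Y)|$. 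The point is that $\psi^*$ acts on $T_Y$, and since $\psi$ is an Enriques involution, $\psi^*$ acts trivially on $H^{2,0}(Y)=\CC\omega_Y$ only up to sign: in fact $\psi^*\omega_Y=-\omega_Y$. Therefore $\psi^*$ acts as $-\mathrm{id}$ on $T_Y\otimes\CC$ restricted to the span of $\omega_Y,\bar\omega_Y$, and more importantly $\psi^*$ acts on $T_Y$ with $(\psi^*)^2=\mathrm{id}$; since $T_Y$ is irreducible as a rational Hodge structure containing $\omega_Y$ in its $(-1)$-eigenspace and is defined over $\QQ$... actually the standard fact (due to Nikulin) is that $\psi^*$ acts as $-\mathrm{id}$ on all of $T_Y$. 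Granting $\psi^*|_{T_Y}=-\mathrm{id}$, the anti-invariant lattice $H^2(Y,\ZZ)^{\psi^*=-1}$ contains $T_Y$, and comparing with the known anti-invariant lattice $U\oplus U(2)^{\oplus 2}\oplus E_8(2)$ one gets that $T_Y$ embeds primitively into a lattice whose discriminant form is $2$-elementary of length $10$.

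**Finishing.** From $T_Y \hookrightarrow N^-:=U+U(2)^2+E_8(2)$ primitively (as the part perpendicular to the invariant $\NS$-classes inside $N^-$), with orthogonal complement $K:=(T_Y)^\perp_{N^-}$ of rank $\rho-2$, Nikulin's glueing exact sequence gives that $A_{T_Y}$ and $A_K$ glue to $A_{N^-}\cong(\ZZ/2\ZZ)^{10}$. More precisely $q_{N^-}=(q_{T_Y}\oplus q_K)|_{H^\perp/H}$ for an isotropic glue subgroup $H$; counting: $|A_{T_Y}|\cdot|A_K| = |A_{N^-}|\cdot|H|^2 = 2^{10}|H|^2$. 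That alone is not enough, so instead I use that $K$ is a sublattice of $\NS(Y)$ (being generated by $\pi^*\Num(S)$-classes plus roots, hence contained in $\NS(Y)$), so $K$ contains $\pi^*\Num(S)\cong U(2)+E_8(2)$ as a finite-index sublattice; therefore $A_K$ has a subgroup $(\ZZ/2\ZZ)^{\leq 10}$ and, combined with the glue, one chases through Nikulin's sequence to extract a $(\ZZ/2\ZZ)^{20-\rho}$ inside $A_{T_Y}$. The bookkeeping is: $\mathrm{rk}\,T_Y = 22-\rho$, and the $2$-elementary ambient forces $\ell(A_{T_Y}^{(2)}) \geq \mathrm{rk}\,T_Y - (\text{number of "odd" glue directions})$; since all of $A_{N^-}$ is $2$-elementary, $A_{T_Y}$ is forced to be $2$-elementary too, and then $\ell(A_{T_Y})\leq \mathrm{rk}\,T_Y = 22-\rho$ while also $\ell(A_{T_Y})\geq \mathrm{rk}\,T_Y - 2 = 20-\rho$ (the rank of $U$ summands "used up"). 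Either way $2^{20-\rho}\mid d(T_Y)=\pm d(\NS(Y))$, and if $d(\NS(Y))$ is odd then $20-\rho\leq 0$, i.e. $\rho=20$.

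**Main obstacle.** The crux — and the step I would spend the most care on — is pinning down that $\psi^*$ acts as $-\mathrm{id}$ on $T_Y$ and that the anti-invariant lattice of an Enriques involution is precisely $U\oplus U(2)^{\oplus 2}\oplus E_8(2)$ (so that $A$ of the anti-invariant part is $2$-elementary of length $10$); these are classical but need a clean citation (Nikulin, or \cite{Keum-Kummer}, or Barth--Peters--Van de Ven), and one must check $T_Y$ sits \emph{primitively} in the anti-invariant lattice so that Nikulin's discriminant-form machinery applies. Everything after that is a finite, if slightly fiddly, discriminant-group computation, and the final implication about odd discriminant is immediate.
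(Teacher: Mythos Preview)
Your route---via $T_Y$ inside the anti-invariant lattice---is genuinely different from the paper's. The paper stays inside $\NS(Y)$: it takes the primitive embedding $L:=\pi^*\Num(S)\cong U(2)+E_8(2)\hookrightarrow\NS(Y)$, sets $H:=\NS(Y)/(L\oplus L^\perp)$, and uses that both projections $H\hookrightarrow A_L$ and $H\hookrightarrow A_{L^\perp}$ are injective. Since $A_L\cong(\ZZ/2\ZZ)^{10}$ one gets $H\cong(\ZZ/2\ZZ)^l$, and since $\mathrm{rk}\,L^\perp=\rho-10$ one gets $l\le\rho-10$; then $d(\NS(Y))=2^{10}\,d(L^\perp)/|H|^2=2^{10-l}\cdot(d(L^\perp)/2^l)$ with the bracketed factor an integer because $|H|\mid|A_{L^\perp}|$. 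No transcendental lattice, no eigenlattice of $\psi^*$ on all of $H^2$.

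Your approach can be salvaged, but as written it has real errors. First, the anti-invariant lattice of an Enriques involution is $N^-\cong U\oplus U(2)\oplus E_8(2)$ of rank $12$, not $U\oplus U(2)^{\oplus 2}\oplus E_8(2)$; consequently $K=T_Y^{\perp}\subset N^-$ has rank $\rho-10$, not $\rho-2$. (Luckily $A_{N^-}\cong(\ZZ/2\ZZ)^{10}$ is still correct.) Second, the assertion that ``$A_{T_Y}$ is forced to be $2$-elementary too'' is simply false---$T_Y$ typically has odd primes in its discriminant---so the final ``bookkeeping'' does not go through as stated. The correct fix is a $2$-length argument: since $N^-$ is an overlattice of $T_Y\oplus K$ with isotropic glue $H'\subset A_{T_Y}\oplus A_K$ and $A_{N^-}\cong (H')^\perp/H'$, the group $(\ZZ/2\ZZ)^{10}$ is a subquotient of $A_{T_Y}\oplus A_K$; as $2$-length is additive on direct sums and does not increase under subquotients, $\ell_2(A_{T_Y})+\ell_2(A_K)\ge 10$, while $\ell_2(A_K)\le\mathrm{rk}\,K=\rho-10$ gives $\ell_2(A_{T_Y})\ge 20-\rho$, hence $2^{20-\rho}\mid d(T_Y)=\pm d(\NS(Y))$. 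Note that in fact $K=\NS(Y)\cap N^-=L^\perp$, so your auxiliary lattice is literally the paper's $L^\perp$; you are gluing it into $N^-$ rather than into $\NS(Y)$, which is why the paper's argument is a step shorter (the glue is $2$-elementary for free there).
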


\begin{proof} 
We shall use the  primitive embedding
$$
L:= U(2) + E_8(2) \cong \pi^*\Num(\WW) \hookrightarrow \mbox{NS}(Y) \, .
$$
We follow the notation of \cite[$\S$~5$^{\circ}$]{nikulin} 
and denote the discriminant group of $L$ by 
\[
\mbox{A}_{L}:=L^\vee/L;
\]
likewise for other primitive sublattices of $\NS(Y)$ such as $L^\perp$.
Define the finite abelian group
$$H :=  
\mbox{NS}(Y) / {(L \oplus L^{\perp})}.$$
Obviously we have the inclusion
$$
H \subset \mbox{A}_{L} \oplus  \mbox{A}_{L^{\perp}}.
$$
Let $p_L$ (resp. $p_{L^{\perp}}$) be the projection 
from $\mbox{A}_{L} \oplus  \mbox{A}_{L^{\perp}}$ onto the first (resp. the second) summand.
By  \cite[p.~111]{nikulin} either projection is an embedding.
The first embedding implies 
\[
H \cong (\ZZ/2\ZZ)^{l},
\] 
while the second shows $l \leq \rho - 10$ since the length of $A_{L^\perp}$
is bounded by the rank of $L^\perp$. 
We obtain
$$ 
\operatorname{d}(\mbox{NS}(Y)) = \operatorname{d}(L \oplus L^{\perp})/|H|^2 = 2^{10-l} \cdot (\operatorname{d}(L^{\perp})/2^{l}).
$$
Note that the right-most term in brackets is an integer 
since $|H|=|p_{L^{\perp}}(H)|$ divides $|\mbox{A}_{L^{\perp}}|=\operatorname{d}(L^{\perp})$.
Hence we infer that  $2^{20 - \rho} | \operatorname{d}(\mbox{NS}(Y))$ as claimed.
\end{proof}

\begin{rem}
A detailed analysis using the $2$-length of the groups involved
allows one to strengthen the above line of arguments to prove that
the K3 cover $\XXX$ of an Enriques surface has $A_{\NS(\XXX)}$
of $2$-length at least $20-\rho(\XXX)$.
\end{rem}

\subsection{3-divisible sets}

We shall now investigate the 3-divisible sets among the 4 A$_2$-configurations supported on fibers
of an Enriques surface $S\in\Ft$.
Our main results will be formulated in Lemma \ref{lem-f3333bigcode} and Lemma \ref{lem-f3333smallcode}.

Let $G$ be  a $2$-section of the elliptic fibration $\varphi$ and let $F_j, F_j', F''_j$, where $j= 1, \ldots 4$,
be  the components of the $I_3$-fibers of $\varphi$. 
In order to streamline our notation we label the components of the singular fibers in the following way
relative to $G$:

\begin{notation}
If $G$ meets only one
component of an $I_3$-fiber we denote  this component by $F_j$. Otherwise, $F_j', F_j''$ stand for the components
of the  $I_3$-fiber that meet the $2$-section $G$ (i.e. we have $G.F_j = 0$ then). 
\end{notation}

In particular, if 
$(F_j + F_j' + F''_j)$ happens to be a half-pencil of the fibration in question, we assume that $G.F_j =1$.
After those preparations we can study $3$-divisible sets in the fibers of the elliptic fibration $\varphi$ on $\WW$
and $\tilde{\varphi}$ on $\XXX$.

 \begin{lemm} \label{lem-f3333bigcode}
Let $\WW \in {\mathcal F}_{3,3,3,3}$. 
The A$_2$-configurations 
\begin{equation} \label{eq-eqscbigen}
\FFb'_1, \FFb''_1, \ldots \FFb'_4, \FFb''_4
\end{equation}
 contain four $3$-divisible sets.
\end{lemm}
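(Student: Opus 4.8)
The strategy is to show that the primitive closure $\overline M$ of the lattice $M$ spanned by the eight curves in \eqref{eq-eqscbigen} is unimodular; by \eqref{eq-unimod} this is exactly equivalent to the assertion that the four A$_2$-configurations contain four $3$-divisible sets. To compute $\overline M$ I would exploit the concrete geometry of $\WW\in\Ft$, namely that $\WW$ is obtained from the Hesse-pencil surface $\Xt=\RES_{3,3,3,3}$ by a pair of logarithmic transformations. The key point is that on the rational elliptic surface $\RES_{3,3,3,3}$ with four $I_3$ fibres, the Mordell--Weil group is $(\ZZ/3\ZZ)^2$ (the base points of the Hesse pencil), and by Remark \ref{rem:3-torsion} the $3$-torsion sections encode precisely the $3$-divisibility relations among the four A$_2$-configurations sitting in the four $I_3$ fibres; since $\mathrm H^2(\RES_{3,3,3,3},\ZZ)$ is unimodular, the primitive closure of $4A_2$ inside $\NS(\RES_{3,3,3,3})$ is unimodular, and one reads off that all four triples of A$_2$-configurations are $3$-divisible there.

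The next step is to transport this statement to the Enriques surface. Applying Lemma \ref{lem-1-2}, after a composition $\PLpsi_\WW$ of Picard--Lefschetz reflections the eight curves $\FFb_j',\FFb_j''$ become, up to multiples of a half-pencil $\HE$, irreducible components of the $I_3$ fibres (or fibres of type $2I_3$) of an elliptic fibration $|2\HE|$ on $\WW$, whose Jacobian is exactly an extremal rational elliptic surface with configuration containing $4A_2$; by Lemma \ref{lem:M} and the classification this Jacobian must be $\RES_{3,3,3,3}$. Since $\PLpsi_\WW$ is an isometry of $\Num(\WW)$, it suffices to compute the index $[\overline M:M]$ after applying $\PLpsi_\WW$, i.e. for the A$_2$-configurations supported on the four $I_3$ fibres of $|2\HE|$ (the multiples of $\HE$ only shift the classes inside $\overline M$ and do not affect the primitive closure modulo the isotropic class). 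Because $\Num(\WW)=U+E_8$ is unimodular, the relations forcing $3$-divisibility on the Jacobian $\RES_{3,3,3,3}$ — which are purely lattice-theoretic statements about $4A_2$ together with the fibre class and a section, all of which have counterparts in $\Num(\WW)$ via the half-pencil $\HE$ and the bisection $R$ — carry over verbatim. Concretely, each $3$-torsion section of the Jacobian produces, via the Shioda--Tate isomorphism and unimodularity of $\Num(\WW)$, an element of $\Num(\WW)$ witnessing $3$-divisibility of the corresponding triple of A$_2$-configurations; running over the four non-trivial "triples" coming from $(\ZZ/3\ZZ)^2$ gives four $3$-divisible sets, hence $\overline M$ is unimodular.

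The main obstacle I expect is the bookkeeping around the half-pencil: on $\WW$ the fibration $|2\HE|$ has no section, so one cannot literally invoke the Mordell--Weil group and Shioda--Tate in the form used for $\RES_{3,3,3,3}$, and one has to argue that the unimodularity of $\Num(\WW)$ plus the presence of the bisection $R$ (of square $0$ or $-2$) is enough to recover the same $3$-divisibility classes — this is the content alluded to in Remark \ref{rem:3-torsion}. A clean way to handle this is to work directly inside $\Num(\WW)=U+E_8$: the sublattice generated by $\HE$, the A$_2$-configurations and a complementary isotropic class is a rank-$10$ even unimodular overlattice of $\langle 2\HE\rangle^\perp\!/\langle 2\HE\rangle \supset 4A_2$ glued along the discriminant form, and an isometry-classification of such overlattices (there is essentially one, corresponding to the Hesse configuration) pins down $\overline M$ as unimodular. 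Alternatively, and perhaps more in the spirit of the paper, one passes to the K3 cover $\XXX$ using the compatible reflections $\PLpsi_\XXX$ from \eqref{eq-ppxxx}–\eqref{eq-useful-K3}, where $\tilde\varphi$ acquires eight $I_3$ fibres and one argues via $\Jac(\XXX)$ with its Mordell--Weil group $(\ZZ/3\ZZ)^2$ (Lemma \ref{lem:324} and its proof), then descends the $3$-divisibility relations along $\pi^*$; the delicate part there is checking that the descended relations live in $\Num(\WW)$ and not merely in a finite-index overlattice, which again comes down to unimodularity of $\Num(\WW)$.
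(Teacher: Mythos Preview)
Your target is exactly right: by \eqref{eq-unimod} the lemma is equivalent to $\overline M$ being unimodular, and since $M^\perp=\overline M^\perp$ has rank~$2$, it is equivalent to show that $M^\perp$ is unimodular. But from that point on you take a long detour through the Jacobian $\Xt$, its Mordell--Weil group, Picard--Lefschetz reflections, and possibly the K3 cover, and you yourself flag the genuine difficulty of transporting the $3$-torsion-section relations to an Enriques surface with no section. None of this is needed.

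The paper's argument is a direct five-line computation of $M^\perp$ inside $\Num(\WW)$, using only the bisection and the half-pencil that are already present for $\WW\in\Ft$. Note first that, by the very definition of $\Ft$, the curves $\FFb_j',\FFb_j''$ in \eqref{eq-eqscbigen} \emph{are} the non-identity components of the four $I_3$ fibres of the given fibration $\varphi$; no appeal to Lemma~\ref{lem-1-2} or to $\PLpsi_\WW$ is necessary. Let $B$ be a half-pencil and $G$ a bisection of $\varphi$. Then $B\in M^\perp$ and $B^2=0$. Adjust $G$ by fibre components to land in $M^\perp$: set
\[
D \;:=\; G \;+\; \sum_{\{j:\,G.F_j=0\}} (\FFb_j'+\FFb_j'')\;\in\; M^\perp,
\]
where $F_j$ is the third component of the $j$-th $I_3$ fibre. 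Since $D.B=G.B=1$ and $B^2=0$, the classes $B,D$ span a copy of the hyperbolic plane $U$ inside $M^\perp$. As $M^\perp$ has rank~$2$, this forces $M^\perp\cong U$, hence $\overline M$ is unimodular and the lemma follows.

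So the conceptual point you were circling around---that unimodularity of $\Num(\WW)$ together with the bisection should substitute for the missing section---is exactly what is used, but it is implemented by exhibiting $U\subset M^\perp$ rather than by lifting Mordell--Weil relations from $\Xt$ or descending from the K3 cover.
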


\begin{proof}
By \eqref{eq-unimod} 
it suffices to prove that $\overline M$,
the primitive closure of the lattice $M$ 
spanned in $\mbox{Num}(\WW)$ by the curves \eqref{eq-eqscbigen},
 is unimodular.
Equivalently, the lattice
$M^\perp=\overline M^\perp$ is unimodular.
To see this,
define an auxiliary divisor class
$$
D  := G + \sum_{\{j: G.F_j=0\}}  (F'_j + F''_j) \in M^\perp.
$$
Let $B$
denote  a half-pencil of the fibration $\varphi$.
By construction, $B\in M^\perp$,
and $B,D$ span the hyperbolic plane $U$
since 
$D.B = G.B =1$ and $B^2 = 0$. Thus $M^\perp$ and  $\overline M$ are unimodular, and
 the proof of Lemma \ref{lem-f3333bigcode} is completed by \eqref{eq-unimod}.
\end{proof}

We shall now eliminate all but one 3-divisible classes by considering a different configuration
of 4 A$_2$'s on $S\in\Ft$.
Recall that  $F_j^{+}$, $F_{j}^{-}$ stand for  the $(-2)$-curves on the   K3-cover 
$\pi: \XXX \rightarrow \WW$ that lie over the smooth rational curve $F_j$, and likewise for $F'_j, F''_j$.
A discussion of properties of $3$-divisible sets of A$_2$-configurations on K3 surfaces can be found in  \cite{b}.
In particular, by \cite[Lemma~1]{b},  a 
$3$-divisible set of A$_2$-configurations on a~K3~surface consists always of six or nine  such configurations.

 \begin{lemm} \label{lem-f3333smallcode}
Let $\WW \in {\mathcal F}_{3,3,3,3}$. Then  

\begin{enumerate}
\item[(a)] The four  A$_2$ configurations 
\begin{equation} \label{eq-eqsc3sm}
F'_1, F_1'', \ldots, F'_3, F_3'',F_4' ,F_4
\end{equation}
on the Enriques surface $\WW$ contain exactly one $3$-divisible set. 

\item[(b)] If the I$_3$ configuration $(F_4 + F_4' + F_4'')$ is not a half-pencil, then the eight A$_2$ configurations 
\begin{equation} \label{eq-eqsc3smk3}
F_1'^{+}, F_1''^{+}, F_1'^{-}, F_1''^{-}, \ldots, F_3'^{-}, F_3''^{-}, F_4'^{+},  F_4^{+}, F_4'^{-},  F_4^{-}
\end{equation}
on the K3-cover $\XXX$ contain exactly one  $3$-divisible set.
\end{enumerate}
\end{lemm}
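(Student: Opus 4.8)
We treat (a) and (b) in turn; in both cases the strategy is to exhibit one $3$-divisible set and then rule out the existence of any further one by lattice bookkeeping, using Lemma~\ref{lem:M}, Corollary~\ref{eq-2poss} and the fact that on an Enriques surface a $3$-divisible set consists of exactly three A$_2$-configurations.

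\emph{Part (a), existence.} The three A$_2$-configurations $\{F_1',F_1''\},\{F_2',F_2''\},\{F_3',F_3''\}$ of \eqref{eq-eqsc3sm}, together with the A$_2$-configuration $\{F_4',F_4''\}$ (two adjacent components of the $I_3$-fibre $F_4+F_4'+F_4''$, disjoint from the first three), form a configuration of type \eqref{eq-eqscbigen}. By Lemma~\ref{lem-f3333bigcode} it carries four $3$-divisible sets, in particular the triplet $\{1,2,3\}$ is $3$-divisible, say $\sum_{j=1}^{3}\varepsilon_j(F_j'-F_j'')=3z$ in $\Num(S)$ with $\varepsilon_j=\pm1$. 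This relation involves none of the fourth configurations, so it also exhibits a $3$-divisible set inside \eqref{eq-eqsc3sm}.

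\emph{Part (a), uniqueness.} Suppose \eqref{eq-eqsc3sm} had four $3$-divisible sets, i.e.\ every triplet is $3$-divisible. Put $\Phi:=F_4+F_4'+F_4''$ (the class of the fibre, resp.\ of the reduced half-fibre, of $\varphi$ through this $I_3$), which is isotropic, \emph{not} divisible by $3$ in $\Num(S)$, and orthogonal to every $F_j'-F_j''$. For each triplet $T$ involving the fourth configuration, compare the $3$-divisibility relation for \eqref{eq-eqsc3sm} (using $F_4'-F_4$) with the one for the associated configuration of type \eqref{eq-eqscbigen} (using $F_4'-F_4''$, which is $3$-divisible by Lemma~\ref{lem-f3333bigcode}). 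Subtracting the two relations after normalising the signs on the two remaining configurations and using $F_4'-F_4''=(F_4'-F_4)+(F_4-F_4'')$ as well as $2F_4'-F_4-F_4''=3F_4'-\Phi$, one obtains either a $(-2)$-divisor of square $-6$ or $-12$ which is divisible by $3$ (impossible, since $9\nmid 6,12$) or a relation forcing $3\mid \Phi$ in $\Num(S)$ (impossible). Tracking the sign normalisations forced on the shared triplet $\{1,2,3\}$ shows that one of these contradictions occurs for at least one of the four relevant triplets. Hence \eqref{eq-eqsc3sm} has exactly one $3$-divisible set.

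\emph{Part (b).} Since $F_4+F_4'+F_4''$ is not a half-pencil, its preimage under $\pi\colon Y\to S$ is a disjoint union of two $I_3$-fibres, and $\pi$ maps the eight configurations in \eqref{eq-eqsc3smk3} two-to-one onto the four configurations in \eqref{eq-eqsc3sm}. Pulling back the unique $3$-divisible set of (a) by $\pi^*$ gives $\sum_{j=1}^{3}\varepsilon_j\bigl((F_j'^{+}-F_j''^{+})+(F_j'^{-}-F_j''^{-})\bigr)=3\pi^*z$, a $3$-divisible set consisting of six of the configurations in \eqref{eq-eqsc3smk3}. By \cite[Lemma~1]{b} a $3$-divisible set on the K3 surface $Y$ has six or nine elements, hence here exactly six, so it remains to show this set is the only one. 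Writing $C\subseteq\mathbb{F}_3^{8}$ and $C_S\subseteq\mathbb{F}_3^{4}$ for the codes of $3$-divisible sets among \eqref{eq-eqsc3smk3} and \eqref{eq-eqsc3sm} respectively, we have $\dim C_S=1$ by (a); and $\pi_*$ (which identifies $F_j'^{\pm},F_j''^{\pm}$ with $F_j',F_j''$ and satisfies $\pi_*\pi^*=2$) sends $3$-divisible combinations on $Y$ to $3$-divisible combinations on $S$, hence induces a linear map $C\to C_S$. Thus $\dim C\le 1+\dim\ker(C\to C_S)$, and it suffices to show this kernel is trivial.

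A nonzero kernel element is a $3$-divisible combination $\Delta$ which is anti-invariant under the Enriques involution $\psi$ and supported on the ``anti-diagonal'' sublattices $\{x-\psi^*x\}\cong A_2(2)$ of the eight configurations; if it is supported on $k$ of the four pairs $\{j^{+},j^{-}\}$ then $\Delta^2=-12k$, so $\Delta=3w$ forces $k=3$ and $w^2=-4$, $\psi^*w=-w$. Hence $w$ lies in the anti-invariant lattice $\Lambda:=(\pi^*\Num(S))^{\perp}\subset\NS(Y)$, which contains $\bigoplus_{j=1}^{4}A_2(2)$; the discriminant bookkeeping of Proposition~\ref{obs-PicK3cover}, together with Lemma~\ref{lem:324} in the generic case $\rho(Y)=18$, shows this inclusion has $2$-power index, so the $3$-divisibility of $\Delta$ in $\Lambda$ already takes place inside $\bigoplus_{j=1}^{4}A_2(2)$. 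But on each summand $\Delta$ restricts to a vector of the form $\pm(g-g')$ in the natural basis $g,g'$, which is not divisible by $3$ there --- a contradiction. Therefore $\ker(C\to C_S)=0$, $\dim C=1$, and (b) follows.

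The main obstacle is this last step: excluding an anti-invariant $3$-divisible set. The argument above is clean for the generic Picard number $\rho(Y)=18$; for the (countably many) members with $\rho(Y)>18$ one has to control the $3$-primary part of $\Lambda$ more carefully, e.g.\ by identifying $\NS(Y)$ explicitly.
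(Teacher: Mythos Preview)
Your argument for (a) is correct and takes a genuinely different route from the paper. The paper simply intersects a putative $3$-divisible relation of the form $\sum_{j=2}^{3}(\lambda'_jF_j'+\lambda''_jF_j'')+\lambda_4F_4+\lambda'_4F_4'=3\mathcal L$ with the bisection $G$; since $G.(\lambda'_jF_j'+\lambda''_jF_j'')=0$ for $j=2,3$ while $G$ meets the fourth $I_3$-fibre in the ``wrong'' components, one gets $\lambda_4$ or $\lambda'_4\in 3\ZZ$, a contradiction. Your approach instead plays the hypothetical code for \eqref{eq-eqsc3sm} against the known tetracode for \eqref{eq-eqscbigen} from Lemma~\ref{lem-f3333bigcode}: subtracting the relations for the same triplet forces $u:=F_4'-F_4$ and $v_4:=F_4'-F_4''$ to satisfy $u\equiv\pm v_4$ or $u\equiv\pm v_4\pm v_i$ in $\Num(S)\otimes\FFF_3$, and each alternative yields a $(-2)$-vector of square $-6$ or $-12$ divisible by $3$, or $3\mid\Phi$. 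This works for a single triplet already (your appeal to ``tracking sign normalisations across the four relevant triplets'' is unnecessary), so the presentation could be tightened, but the idea is sound. The paper's bisection argument is shorter; yours has the virtue of being purely lattice-theoretic and of not invoking the bisection $G$ at all.

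Part (b), however, has a genuine gap that you yourself flag: your control of the anti-invariant lattice $\Lambda$ via Lemma~\ref{lem:324} and Proposition~\ref{obs-PicK3cover} is only valid for $\rho(\XXX)=18$. For $\rho(\XXX)>18$ the sublattice $\bigoplus_{j=1}^4 A_2(2)$ is no longer of finite index in $\Lambda$, and you have no mechanism to rule out $3$-divisibility of your anti-invariant $\Delta$ inside the larger $\NS(\XXX)$. Identifying $\NS(\XXX)$ ``explicitly'' on a countable union of subfamilies is not a proof.

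The paper closes this gap by a completely different manoeuvre that works uniformly. Assuming an extra $3$-divisible set contains the pair $F_4^+,F_4'^+$, the push-forward argument (your $\pi_*$) forces it to contain $F_4^-,F_4'^-$ as well, with opposite signs, so one has
\[
D_0+(\FFb_4'^{-}-\FFb_4^{-})-(\FFb_4'^{+}-\FFb_4^{+})=3\tilde{\mathcal L}
\]
with $D_0$ supported on indices $1,2,3$. Now add to this the pull-back of a $3$-divisible triplet $\{2,3,4\}$ from Lemma~\ref{lem-f3333bigcode} (which uses $F_4',F_4''$, \emph{not} $F_4',F_4$). Here the hypothesis that $(F_4+F_4'+F_4'')$ is \emph{not} a half-pencil is used in an essential way: both triangles $F_4^{\pm}+F_4'^{\pm}+F_4''^{\pm}$ are honest fibres of $\tilde\varphi$, hence linearly equivalent, and the $F_4$-contributions combine to produce the $3$-divisible class
\[
D-(F_4'^{+}-F_4''^{+})
\]
with $D$ supported on $F_j'^{\pm},F_j''^{\pm}$ for $j\le 3$. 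This lies in the ternary code attached to the \emph{eight} configurations $F_j'^{\pm},F_j''^{\pm}$ ($j=1,\dots,4$), i.e.\ the pull-back of the configurations from Lemma~\ref{lem-f3333bigcode}. That code is a ternary $[8,d,\{6\}]$-code, so $d\le 2$ by the Griesmer bound; but the four $3$-divisible sets on $\WW$ from Lemma~\ref{lem-f3333bigcode} already pull back to four $\psi^*$-invariant lines, filling the code. The class just produced is visibly \emph{not} $\psi^*$-invariant (its $j=4$ part lives only on the $+$ sheet), giving the contradiction --- with no hypothesis on $\rho(\XXX)$.
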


\begin{proof}  
{\sl (a):} By \eqref{eq-2poss} we are to show  that \eqref{eq-eqsc3sm} does not contain four  $3$-divisible sets.
Suppose to the contrary. Then each triplet of A$_2$-configurations in \eqref{eq-eqsc3sm} is $3$-divisible. In particular, we have 
$$
\sum_{j=2}^{3}(\lambda'_j \FFb'_j + \lambda_j'' \FFb_j'') + \lambda_4 \FFb_4 + \lambda'_4 \FFb'_4  = 3{\mathcal L} \, , \quad \mbox{ where } \{\lambda'_j, \lambda''_j\} = \{\lambda_4, \lambda'_4\} = \{1, -1 \} \, .
$$
Since  $G.(\lambda'_j \FFb'_j + \lambda_j'' \FFb_j'')=0$ for $j=2,3$, we obtain $G.(\lambda_4 \FFb_4 + \lambda'_4 \FFb'_4) \in 3\ZZ$.

If $G$ meets only the curve $\FFb_4$ in the fiber $(F_4 + F_4' + F''_4)$ (resp.   $2(F_4 + F_4' + F''_4)$ iff we deal with a half-pencil)
we have $G.\FFb_4 \in \{2,1\}$ and  $G.\FFb'_4 = 0$, so $\lambda_4 \in 3\ZZ$. Contradiction. 

Otherwise,  $G$ meets the fiber $(F_4 + F_4' + F''_4)$ in two different components, i.e. 
$G.\FFb'_4 = G.\FFb''_4 = 1$  and  $G.\FFb_4 = 0$, which yields 
$\lambda_4' \in 3\ZZ$. Again we arrive at a contradiction, which  implies by symmetry and Lemma \ref{lem:M}
that 
\begin{equation} \label{eq-strrr}
\FFb'_1, \FFb_1'', \FFb'_2, \FFb_2'',  \FFb'_3, \FFb_3'' \mbox{ form the unique } 3 \mbox{-divisible set in \eqref{eq-eqsc3sm}. } 
\end{equation}

{\sl (b):} Since the pull-back of a (non-trivial) $3$-divisible divisor under $\pi$ is (non-trivially) $3$-divisible,   \eqref{eq-strrr} implies that  the six   A$_2$-configurations 
\begin{equation}
F_1'^{+}, F_1''^{+}, F_1'^{-}, F_1''^{-}, \ldots ,F_3'^{+}, F_3''^{+}, F_3'^{-}, F_3''^{-} 
\end{equation}
are 
$3$-divisible on the K3-cover $Y$. To show that they form the unique $3$-divisible configuration in \eqref{eq-eqsc3smk3},
assume that the A$_2$-configuration $F_4^{+}$, $F_4'^{+}$ is contained in another non-trivial $3$-divisible set on $Y$. 

Suppose that neither the curve  $F_4^{-}$ nor $F_4'^{-}$ is  contained in the $3$-divisible divisor in question. 
Since  
 $\pi$ is unramified,
 push-forward yields a non-trivial $3$-divisible set of three A$_2$-configurations 
in \eqref{eq-eqsc3sm} that contains  $F_4$, $F_4'$. The latter is impossible by \eqref{eq-strrr}. 

Thus we can assume that the curves 
$F_4^{-}$, $F_4'^{-}$, $F_4^{+}$, $F_4'^{+}$ are contained in the support of the $3$-divisible divisor in question. 
From the properties of the push-forward $\pi_{*}$ and \eqref{eq-strrr},
we infer the existence of  $\lambda_j'^{\pm}$,  $\lambda_j''^{\pm}$  $\in \{0, 1, -1 \}$, 
such that one has 
\begin{equation*}
\sum_{j=1}^{3}(\lambda_j'^{+} \FFb_j'^{+} + \lambda_j'^{-} \FFb_j'^{-} 
+ \lambda_j''^{+} \FFb_j''^{+} + \lambda_j''^{-} \FFb_j''^{-}) +
(\FFb_4'^{-} - \FFb_4^{-}) -  (\FFb_4'^{+} - \FFb_4^{+}) = 3\tilde{\mathcal L}
\end{equation*}
for a divisor $\tilde{\mathcal L}$ on $\XXX$.
By Lemma~\ref{lem-f3333bigcode}, each triplet of A$_2$-configurations in \eqref{eq-eqscbigen}  is $3$-divisible, so we can assume that for $j=2,3,4$
there exist $\mu'_j,   \mu_j''$,  such that  
\begin{equation*}  \label{eq-sr}
\sum_{j=2}^{4}(\mu'_j (\FFb_j'^{+} + \FFb_j'^{-}) + \mu_j'' (\FFb_j''^{+} + \FFb_j''^{-})) = 3\hat{\mathcal L}  \mbox{ and } \{\mu'_j,   \mu_j''\} = \{1,-1\}
\end{equation*}
for some $\hat{\mathcal L}\in\Pic(\XXX)$. 
After exchanging components, if necessary, we can assume that $(\mu'_4,   \mu_4'') = (1,-1)$. 
By adding the previous two equalities we arrive at a 3-divisible divisor  
$$
D + (2 \FFb_4'^{+} + \FFb_4''^{+}) + 3( \FFb_4'^{-} + \FFb_4^{+}) - (\FFb_4^{-} + \FFb_4'^{-} + \FFb_4''^{-}) - 2 (\FFb_4^{+} + \FFb_4'^{+} + \FFb_4''^{+})
$$
with
$\mbox{supp}(D)$ contained in the union of the curves  $\FFb_j'^{\pm}$, $\FFb_j''^{\pm}$ for $j = 1, 2, 3$.
Since both  triangles $(\FFb_4^{-} + \FFb_4'^{-} + \FFb_4''^{-})$, $(\FFb_4^{+} + \FFb_4'^{+} + \FFb_4''^{+})$
are fibers of the elliptic fibration $\tilde{\varphi}$,
 we derive a 3-divisible divisor
\begin{equation}  \label{eq-ffr}
D - ( \FFb_4'^{+} - \FFb_4''^{+})
\end{equation}
with
$\mbox{supp}(D)$ satisfying the condition given above.
We continue to establish a contradiction.


Recall (see e.g. \cite[$\S$~5]{r}) that each  non-trivial $3$-divisible set  on $\XXX$  corresponds to a line $\FFF_3 v$, where $v$ is a non-zero vector in the kernel of the 
$\FFF_3$-linear map
\begin{equation*} 
 \FFF_3^8 \ni (\lambda_1^{+},\ldots, \lambda_4^{-}) \mapsto  \sum_{1}^{4} \lambda_j^{+} (\FFb_j'^{+} - \FFb_j''^{+}) + \sum_{1}^{4} \lambda_j^{-} (\FFb_j'^{-} - \FFb_j''^{-}) \in 
\mbox{Pic}(\XXX) \otimes \FFF_3.
\end{equation*}
Thus the kernel in question is a ternary $[8,d,\{6\}]$-code (i.e. a $d$-dimensional subspace of $\FFF_3^8$, such that all its  non-zero vectors have exactly $6$ non-zero coordinates). 
By the Griesmer bound
(see e.g. \cite[Thm (5.2.6)]{vlt}), we have $d \leq 2$ and $\FFb_j'^{\pm}$, $\FFb_j''^{\pm}$,  where $j = 1, \ldots, 4$, contain at most four sets of $3$-divisible  A$_2$-configurations.
On the other hand we obtain four non-trivial $\psi^{*}$-invariant $3$-divisible sets  by pulling-back the  $3$-divisible sets from $\WW$ (see Lemma~\ref{lem-f3333bigcode}). Observe that the $3$-divisible set
given by  \eqref{eq-ffr} is not $\deck^{*}$-invariant. Contradiction.   
\end{proof}

%

\begin{rem}
\label{rem:I_3}
Since any elliptic fibration on an Enriques surface has exactly 2 multiple fibers,
we can always ensure by exchanging fibers that the assumption in Lemma \ref{lem-f3333smallcode} (b) holds.
\end{rem}

\subsection{Second family of Enriques surfaces}
\label{ss:2nd}

In the following paragraphs,
we work out  Enriques surfaces with elliptic fibrations of the types \eqref{eq-nppss}.
To this end, we consider another two extremal rational elliptic surfaces, given in Weierstrass form
\begin{eqnarray*}
\RES_{4,3,1}: & y^2 + xy + ty = x^3;\\
\RES_{4,4}: & y^2 + ty = x^3.
\end{eqnarray*}
Each has a fibre of Kodaira type $IV^*$ at $\infty$ and a 3-torsion section at $(0,0)$.
The fibre type at $t=0$ is $IV$ for $X_{4,4}$ and $I_3$ for $X_{4,3,1}$ (which thus has one further singular fiber, of type $I_1$).
The surface $X_{4,4}$ gives the special case in \eqref{eq-nppss} omitted in \cite{kz}.
We point out that both elliptic surfaces live inside an isotrivial family
\begin{eqnarray}
\label{eq:isotrivial}
\mathcal X:  y^2 + cxy + ty = x^3, \;\; c\in\CC
\end{eqnarray}
whose fibers $\mathcal X_c$ for $c\neq 0$ are all isomorphic to $\Xf$ after rescaling
while $\mathcal X_0=X_{4,4}$.
Implicitly, this connection will feature again shortly.

As in \ref{ss:1st},
we shall apply logarithmic transformations of order $2$ to $\Xf$ and $X_{4,4}$.
On $\Xf$, we can do so for any two distinct  points $P_1, P_2 \in \PP^1 \setminus \{ \infty\}$;
again, the logarithmic transformation depends on the choice of 2-torsion points in the fibers,
but as in the proof of Lemma \ref{lem:Ft}, we obtain an irreducible 2-dimensional family $\mathcal F$
of Enriques surfaces, parametrized by a 9-fold covering of the configuration space
$\mbox{Sym}^2 (\PP^1\setminus\{\infty\}) \setminus \text{diagonal}$.

The situation is quite different for the logarithmic transformations of $X_{4,4}$:
since $X_{4,4}$ has only two singular fibers (and the above Weierstrass form has only three terms), 
we can still rescale the base curve $\PP^1$
without changing the surface at all!
Hence the resulting Enriques surfaces do only come in a one-dimensional family.

\begin{lemm}
\label{lem:boundary}
The Enriques surfaces arising from $X_{4,4}$ via logarithmic transformation
lie in the boundary of $\mathcal F$.
\end{lemm}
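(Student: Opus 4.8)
The plan is to show that the one-parameter family of Enriques surfaces arising from $X_{4,4}$ sits in the closure of the two-parameter family $\mathcal F$ by degenerating members of $\mathcal F$ along the isotrivial family \eqref{eq:isotrivial}. First I would recall that every member of $\mathcal F$ is obtained from $\Xf$ by a logarithmic transformation of order $2$ at two distinct points $P_1,P_2\in\PP^1\setminus\{\infty\}$, with a choice of non-trivial $2$-torsion points in the two fibers; crucially, $\Xf$ itself is the generic member $\mathcal X_c$ ($c\neq 0$) of the isotrivial pencil \eqref{eq:isotrivial}, up to rescaling, while $\mathcal X_0=X_{4,4}$. So I would consider performing the \emph{same} logarithmic transformation data (fixed $P_1,P_2$, and a continuously varying choice of $2$-torsion point in the smooth fibers of $\mathcal X_c$ above $P_1,P_2$) on the whole family $\mathcal X_c$, producing a family of Enriques surfaces $\mathcal S_c$ for $c$ in a punctured disc, each of which lies in $\mathcal F$.

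Next I would argue that this family extends over $c=0$ with central fibre an Enriques surface obtained from $X_{4,4}=\mathcal X_0$ by the corresponding logarithmic transformation. The key point is that the logarithmic transformation is a local analytic modification of the elliptic surface near the two chosen fibres, and those fibres remain smooth (and the chosen $2$-torsion sections remain well-defined and non-trivial) as $c\to 0$, since $P_1,P_2\neq\infty$ and, after possibly shrinking, $P_1,P_2\neq 0$; thus nothing degenerates in the transformation data, and the construction is flat over the whole disc. Hence $\mathcal S_0$ is one of the Enriques surfaces arising from $X_{4,4}$, and it is a limit of the surfaces $\mathcal S_c\in\mathcal F$, so it lies in $\overline{\mathcal F}$. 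Since by the discussion preceding the lemma the $X_{4,4}$-surfaces form only a one-dimensional family while $\mathcal F$ is two-dimensional and irreducible, the $X_{4,4}$-surfaces lie in the boundary $\overline{\mathcal F}\setminus\mathcal F$, which is what is asserted.

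I would then need to address the subtlety that the moduli point of $\mathcal S_0$ should not accidentally lie in the interior $\mathcal F$: this follows because the elliptic fibration on a surface in $\mathcal F$ has an $I_3$ fibre (and an $I_1$) coming from $\Xf$, which specializes to the $IV$ fibre of $X_{4,4}$ — a jump in singular-fibre type that cannot occur within the family $\mathcal F$ where the fibre configuration of type \eqref{eq:isotrivial}, $c\neq 0$, is locally constant. Equivalently, one can observe that the generic member of $\mathcal F$ has a fibration of type $IV^*\oplus I_3\oplus I_1$ (possibly with multiplicities), whereas $\mathcal S_0$ has $IV^*\oplus IV$, and these lie in distinct strata.

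The main obstacle I anticipate is making precise that the degeneration is genuinely within $\overline{\mathcal F}$ rather than in some larger moduli space, i.e.\ checking flatness/properness of the family $\{\mathcal S_c\}$ over the full disc including $c=0$ and identifying the limit correctly — in particular ensuring that the logarithmic transformation, which is analytic in nature, behaves continuously in the parameter $c$ and that the two-torsion choices can be propagated consistently. Once that continuity is in hand, the boundary statement is immediate from the dimension count. I would keep the argument short by invoking the local analytic description of logarithmic transformations (as in \cite[\S~1.6]{fm}) applied fibrewise over the disc.
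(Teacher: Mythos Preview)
Your approach is correct in outline but takes a genuinely different route from the paper. In fact, the paper explicitly declines to argue this way: immediately after stating the lemma the authors write that they ``did not attempt to prove Lemma~\ref{lem:boundary} directly from the above data (although the isotrivial family \eqref{eq:isotrivial} certainly points in this direction).'' Your proposal is exactly that direct argument.

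The paper's proof (given in Remark~\ref{rem:j=0}) is purely algebro-geometric and avoids logarithmic transformations altogether. It first realises the whole family $\Ff$ via Enriques involutions of base change type on K3 surfaces $\Ys$ obtained as quadratic base changes of the rational elliptic surface $\Xs$ (with fibres $I_6,I_3,I_2,I_1$), as set up in \S\ref{ss:6321}. On $\Ys$ one then extracts the elliptic fibration with two $IV^*$ fibres and two orthogonal $A_2$'s; generically this is a quadratic base change of $\Xf$, and the paper computes explicitly that it degenerates to the isotrivial $j=0$ case (i.e.\ to a base change of $X_{4,4}$, with $IV$ replacing $I_3+I_1$) precisely along the curve $\lambda=1-3/\gamma$ in the two-dimensional parameter space. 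This exhibits the $X_{4,4}$-surfaces as a one-dimensional locus inside the closure of an algebraic family.

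What each approach buys: your argument is conceptually shorter and requires no new constructions, but its weight rests entirely on the technical point you yourself flag---that the analytic logarithmic transformation can be performed in a flat family over the disc in $c$, with the limit correctly identified. This is true (the local model near a smooth fibre depends analytically on the data), but making it precise is nontrivial, which is presumably why the authors sidestepped it. The paper's approach costs more setup (the $\Xs$ machinery), but that machinery is already needed elsewhere (explicit examples, N\'eron--Severi and transcendental lattice computations, the overlap analysis of $\Ft\cap\Ff$), so the proof of the lemma comes essentially for free and is entirely algebraic.
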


We did not attempt to prove Lemma \ref{lem:boundary}
directly from the above data (although the isotrivial family \eqref{eq:isotrivial} certainly points in this direction).
Instead, we will pursue an alternative algebraic approach towards the Enriques surfaces with fibers of type $IV^*$ and 
$I_3$ or $IV$ in Section \ref{s:moduli}.
Incidentally, this will provide an easy proof of Lemma \ref{lem:boundary}, see Remark \ref{rem:j=0}.

\begin{defi}
We denote the resulting irreducible family of Enriques surfaces by $\Ff$
(comprising surfaces arising from $\Xf$ as well as from $X_{4,4}$).
\end{defi}

%

%
On an Enriques surface $\WW' 
\in {\mathcal F}_{4,3,1}$, we put
\begin{equation} \label{eq-4star} \nonumber 
3 \FFb_0 + \sum_{j=1}^{3} (\FFb'_j + 2 \FFb''_j)
\end{equation}
(resp.  $\FFb_4 + \FFb'_4 + \FFb''_4$)  
to denote the $IV^{*}$-fiber (resp. the $I_3$ or $IV$-fiber) of  the induced elliptic fibration $\elpi$.
It is immediate that, up to the choice of the curve $\FFb_4$, 
 the rational curves  
$$
\FFb'_1, \FFb_1'', \ldots \FFb'_4, \FFb_4''
$$
 form the only set of four disjoint
 A$_2$-configurations contained in the singular fibers of the fibration $\elpi$.

Let $\pi \, : \, \XXX' \rightarrow \WW'$ be the K3-cover and let $\tilde{\varphi}$ be the fibration induced by $\varphi$ on $ \XXX'$. 
The number of $3$-divisible sets on $\WW'$ (resp. $\XXX'$)  supported on the components of fibers of $\varphi$ (resp. $\tilde{\varphi}$)
 can be found using \cite[Lemma~3.5~(1)]{kz}. 
 
\begin{lemm} \label{lem-f431} 
Let $\WW' \in {\mathcal F}_{4,3,1}$.
Then the four A$_2$ configurations 
$$
\FFb'_1, \FFb_1'', \ldots \FFb_4', \FFb_4''
$$ 
contain exactly one $3$-divisible set, whereas 
the eight A$_2$-configurations 
$$
{\FFb'_1}^{+}, \FFb_1''^{+}, \ldots, \FFb_4'^{-} ,\FFb_4''^{-}
$$ 
on the K3-cover contain four $3$-divisible sets.
\end{lemm}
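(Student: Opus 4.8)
The claim has two halves, and the key is to relate the $3$-divisibility question on $\WW'$ to the combinatorics of the $IV^*$-fiber, using that $\Num(\WW')$ is unimodular (so that $\overline M$ is unimodular iff $M^\perp$ is, by the same argument as in \eqref{eq-unimod}). For the first half, I would mimic the proof of Lemma~\ref{lem-f3333smallcode}~(a): by Corollary~\ref{eq-2poss} it suffices to exclude that $\FFb'_1,\FFb''_1,\ldots,\FFb'_4,\FFb''_4$ contain four $3$-divisible sets, equivalently that $\overline M$ is unimodular. Suppose it were. Then every triple of the four A$_2$-configurations is $3$-divisible; in particular, picking the three configurations coming from the $IV^*$-fiber, the divisor $\sum_{j=1}^{3}(\lambda'_j\FFb'_j+\lambda''_j\FFb''_j)$ with $\{\lambda'_j,\lambda''_j\}=\{1,-1\}$ would have to be $3$-divisible in $\Num(\WW')$. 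But these six curves are supported on a single $IV^*$-fiber, whose primitive closure in the unimodular lattice $\Num(\WW')$ is $E_8$ (after omitting the multiplicity-$3$ component $\FFb_0$, or rather, the relevant simple component), and inside $E_8$ the lattice $3A_2$ has primitive closure of discriminant strictly smaller than $3^3$; checking the possible overlattices of $3A_2$ inside $E_8$ shows no class of the form $\sum(\lambda'_j\FFb'_j+\lambda''_j\FFb''_j)$ is $3$-divisible. This forces $[\overline M:M]=3$, i.e. exactly one $3$-divisible set.

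Alternatively — and this is cleaner — I would invoke \cite[Lemma~3.5~(1)]{kz} directly as the excerpt suggests: that lemma computes the number of $3$-divisible sets from the configuration of singular fibers, and for the fiber type $IV^*\oplus I_3\oplus I_1$ (or $IV^*\oplus IV$) it yields exactly one set on the Enriques surface. So the first half is essentially a table lookup once the fiber configuration of $\elpi$ is pinned down, which it is by the construction of $\Ff$ in \ref{ss:2nd}.

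For the second half, the pull-back $\pi^*$ of the unique $3$-divisible set on $\WW'$ gives a $\deck^*$-invariant $3$-divisible set of six A$_2$-configurations on $\XXX'$ among the eight ${\FFb'_j}^\pm,{\FFb''_j}^\pm$; by \cite[Lemma~1]{b} any $3$-divisible set on a K3 surface has six or nine members, and here nine is impossible since we only have eight configurations, so every $3$-divisible set on $\XXX'$ has exactly six members. The space of $3$-divisible sets corresponds, as in the proof of Lemma~\ref{lem-f3333smallcode}~(b), to the kernel of the $\FFF_3$-linear map $\FFF_3^8\to\Pic(\XXX')\otimes\FFF_3$ sending $(\lambda^\pm_j)\mapsto\sum\lambda^+_j(\FFb'_j-\FFb''_j)^+ + \sum\lambda^-_j(\FFb'_j-\FFb''_j)^-$, and this kernel is a ternary $[8,d,\{6\}]$-code; the Griesmer bound forces $d\le 2$, so there are at most four $3$-divisible sets. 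To get exactly four it then suffices to exhibit a two-dimensional code, i.e. two independent $3$-divisible sets: one is $\pi^*$ of the set on $\WW'$, and a second independent one is obtained from the $IV^*$-fiber structure together with the involution — concretely, the Shioda--Tate/Mordell--Weil description of the Jacobian rational elliptic surface $\RES'$ (which has fiber type $I_0^*\oplus\dots$ after the quadratic twist, cf. \ref{ss:ell}) shows that the $3$-torsion section splits the $8A_2$ into two $3$-divisible families of six, swapped up to relabelling by $\deck^*$. Since the two constructed sets are visibly independent over $\FFF_3$ (one is $\deck^*$-invariant in a way the other is not), $d=2$ and we get exactly four.

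\textbf{Main obstacle.} The delicate point is the second half: producing the \emph{second} independent $3$-divisible set on $\XXX'$ and verifying it is not proportional to (a $\deck^*$-translate of) the pull-back from $\WW'$. For the first family this was handled by the explicit Hesse-pencil $3$-torsion sections; here, on $\XXX'$, the cleanest route is to use that $\Jac(\XXX')$ is an extremal rational-or-K3 elliptic surface with $\MW\supseteq(\ZZ/3\ZZ)$ coming from the $3$-torsion section $(0,0)$ of $\RES_{4,3,1}$, pull it back, and translate the A$_2$-configurations by it to witness $3$-divisibility — but one must be careful that on the $IV^*$ fiber the $3$-torsion section meets a component in the "right" place so that the six-member set it produces genuinely differs from $\pi^*$ of the Enriques set. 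Checking this component incidence (using that additive fibers are unramified under the base change, as noted in \ref{ss:ell}) is the one spot where a short explicit computation seems unavoidable; everything else is coding theory (Griesmer) and the general lattice-theoretic input already assembled in Sections~\ref{s-lat} and \ref{s-2fam}.
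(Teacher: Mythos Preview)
Your approach is essentially the paper's, with one unnecessary detour. For the first half, your ``cleaner'' alternative --- invoking \cite[Lemma~3.5~(1)]{kz} --- is exactly what the paper does: that lemma asserts that the three A$_2$'s supported on the $IV^*$ fiber are \emph{not} $3$-divisible on $\WW'$, which by Corollary~\ref{eq-2poss} forces exactly one $3$-divisible set among the four.

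For the second half, your structural outline (pull-back of the Enriques set gives one $3$-divisible set on $\XXX'$; a second independent one plus the Griesmer bound gives exactly four) matches the paper, but you work too hard to produce the second set. The very same reference \cite[Lemma~3.5~(1)]{kz} already asserts that the six A$_2$-configurations $\FFb_j'^{\pm},\FFb_j''^{\pm}$ ($j=1,2,3$) coming from the two $IV^*$ fibers on $\XXX'$ \emph{are} $3$-divisible. This set pushes forward under $\pi_*$ to a trivially $3$-divisible divisor on $\WW'$, whereas the pull-back set pushes forward non-trivially; hence the two are independent over $\FFF_3$. Your ``main obstacle'' therefore dissolves: no Mordell--Weil argument, no quadratic-twist analysis, and no component-incidence check at the $IV^*$ fiber is required. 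The entire lemma reduces to two readings of the same cited result together with Corollary~\ref{eq-2poss} and the coding-theory bound already recorded in the proof of Lemma~\ref{lem-f3333smallcode}.
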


\begin{proof}
By \cite[Lemma~3.5~(1)]{kz} the set   $\FFb_1', \FFb_1'', \ldots, \FFb_3', \FFb_3''$ is not 
$3$-divisible, 
whereas the six A$_2$-configurations 
$\FFb_1'^{+}, \FFb_1''^{+}, \FFb_1'^{-}, \FFb_1''^{-}, \ldots, \FFb_3'^{-}, \FFb_3''^{-}$ 
form a $3$-divisible set on $\XXX'$
(pushing down to a trivially 3-divisible divisor on $\WW'$). 
The former assertion
rules out the second possibility of Corollary \ref{eq-2poss}. 

On the other hand,
the curves $\FFb'_1, \FFb_1'', \ldots \FFb'_4, \FFb_4''$ contain at least one $3$-divisible set by Corollary \ref{eq-2poss}. 
As the pullback under $\pi$  we obtain another $3$-divisible
set on $\XXX'$, so the K3-cover contains exactly four $3$-divisible sets (see the proof of Lemma~\ref{lem-f3333smallcode}).  
\end{proof}


\subsection{Proof of Theorem \ref{thm0}}

Let $S$ be an Enriques surface
admitting 4 disjoint A$_2$-configurations.
Then $S$ admits an elliptic fibration 
\[
\pi: S \to \PP^1
\]
whose singular fibers are classified in Lemma \ref{lem-1-2}.
The Jacobian of $\pi$ is either $\Xt, \Xf$ or $X_{4,4}$
(as exploited in the proof of Lemma \ref{lem-1-2}).
In particular, $S$ arises from $\Jac(\pi)$ by a logarithmic transformation of degree $2$.
Thus $S\in\Ft\cup\Ff$ as shown in \ref{ss:1st}, \ref{ss:2nd}.
\qed

\subsection{Explicit examples supporting each case}

\begin{examp}  \label{example-kondo} 
Let $\WW'$ be the Enriques surface with finite automorphism group
$S_4\times\ZZ/2\ZZ$ considered in \cite[Example V]{kondo}
arising from the Kummer surface of $E^2$ for the elliptic curve with zero j-invariant. 
By \cite[Table~2 on p.~132]{kondo}
we have 
$$
\WW' \in {\mathcal F}_{4,3,1} \setminus {\mathcal F}_{3,3,3,3}\, .
$$
\end{examp}

In fact, by \cite[Remark~(4.29)]{kondo} no surface in  $ {\mathcal F}_{3,3,3,3}$ has finite automorphism group, 
so we can find no example of a surface from  ${\mathcal F}_{3,3,3,3}$ in \cite{kondo}. Therefore, we will use 
the construction of Enriques involution of base change type
(as reviewed in \ref{ss:ell})            
 to obtain an explicit example of such an Enriques surface.
 
 \begin{examp}
 \label{ex-2}
 Let $\XXX$ denote the singular K3 surface with transcendental lattice
 \begin{eqnarray}
 \label{eq:TT}
 T(\XXX) =
 \begin{pmatrix}
 6 & 3\\
 3 & 12
 \end{pmatrix}.
 \end{eqnarray}
 In what follows, we will sketch in a rather conceptual way
 that $\XXX$ admits an Enriques involution of base change type
 whose quotient surface is in $\Ft$.
 
 We start from elliptic curves $E$ parametrised by the j-invariant.
 To $E^2$, we can associate the Kummer surface $\mbox{Kum}(E^2)$,
 but also by way of what is called a Shioda-Inose structure nowadays (cf.~\cite{SI}) a K3 surface 
 which recovers the transcendental lattice of $E^2$.
 We thus obtain a one-dimensional family of K3 surfaces $\mX$
 with generic transcendental lattice
 \[
 T(\mX) = U+\langle 2\rangle.
 \]
 By Nishiyama's method \cite{nishi} $\mX$ comes with an elliptic fibration with a fiber of Kodaira type $I_{18}$
 and generically $\MW(\mX)\cong \ZZ/3\ZZ$.
 Quotienting out by translation by the $3$-torsion sections,
 we obtain another family of K3 surfaces $\mY$,
 generically with one fiber of type $I_6$, 6 fibres of type $I_3$ and 
 \[
 \MW(\mY)\cong(\ZZ/3\ZZ)^2.
 \]
 It follows that $\mY$ arises from $\Xt$ by the one-dimensional family of base changes \eqref{eq:base-change}
 ramified at a given singular fiber, say $\lambda=\infty$.
 That is, there is an involution $\imath\curvearrowright\mY$
 such that $\mY/\imath = \Xt$.
 For discriminant reasons, the transcendental lattice is scaled by a factor of $3$
 \begin{eqnarray}
 \label{eq:T(3)}
 T(\mY) \cong T(\mX)(3)
 \end{eqnarray}
 where this equality not only holds generically, but also on the level of  single members of the families 
 (cf.~ \cite[Lemma~8]{schuett-izvestiya}).
 
 We continue by specialising to a member $\XXX\in \mY$
 in order to endow $\XXX$ with a section
 which combines with $\imath$ to an Enriques involution of base change type. 
 To this end, choose $E$ to be the elliptic curve with CM by $\ZZ[\omega]$ for $\omega=(1+\sqrt{-7})/2$
 (j-invariant $-15^3$).
By \cite{SM},
\[
T(E^2)
\cong \begin{pmatrix}
2 & 1\\1 & 4\end{pmatrix}
\] 
which exactly gives rise to \eqref{eq:TT} by \eqref{eq:T(3)}.
 Inside the family $\mY$, this can only be accounted for by a section $Q$ of height $7/6$.
 It is induced from a section $Q'$ of height $7/12$ on the quadratic twist $\Xt'$ of $\Xt$.
 Here $\Xt'$ has singular fibers of types $I_3^*$, 3 times $I_3$ and $I_0^*$;
 the given height can only be attained if $Q'$ is perpendicular to $O'$
 and intersects nontrivially $I_3^*$ (far simple component), one $I_3$ and $I_0^*$.
 In consequence, the pull-back $Q$ on $\XXX$ is disjoint from $O$ and anti-invariant for $\imath$.
 Hence 
 \[
\jmath:=  (\text{translation by } Q) \circ\imath
 \]
 defines an Enriques involution on $\XXX$ such that $\XXX/\jmath\in\Ft$ as claimed.
 All of this can be made explicit without much difficulty.
 For instance, spelling out the conditions for $Q'$ on $\Xt'$
 to take the above shape,
 one finds that the second ramification point of the quadratic base change \eqref{eq:base-change}
 is located at $\lambda=5/4$.
 We leave the details to the reader.
 \end{examp}

 \subsection{Gorenstein $\QQ$-homology projective planes}
 
 In \cite{HKO},
 a classification of Gorenstein $\QQ$-homology projective planes
 is completed in terms of their singularity types.
 A key case originates from Enriques surfaces after contracting 
a set of nine $(-2)$-curves.
Among the 31 singularity types which are a priori possible, only
two are so far not supported by an example.
Here we note that the Enriques surface $S=Y/\jmath$ from Example \ref{ex-2}
remedies this for one type.
(See also \cite{S-fake} for subsequent results in the same spirit.)

\begin{prop}
\label{prop}
$S$ contains an A$_3+3$A$_2$ configuration of smooth rational curves.
\end{prop}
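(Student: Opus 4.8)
The goal is to exhibit nine disjoint $(-2)$-curves on $S$ forming an $A_3+3A_2$ configuration. The plan is to start from the three disjoint $A_2$-configurations that the elliptic fibration of type $I_3^4$ on $S\in\Ft$ supplies "for free": by the discussion in Section~\ref{s-2fam}, the Jacobian $\Jac(\varphi)=\Xt$ is the Hesse pencil, and after choosing the logarithmic transformation so that two of the four $I_3$-fibers become half-pencils, three of the $I_3$-fibers descend to honest reduced triangles of smooth rational curves on $S$. Deleting one component from each of three such triangles produces three disjoint $A_2$-configurations supported on fibers. The fourth fiber (a half-pencil $2(F_4+F_4'+F_4'')$) must instead be used to build the $A_3$, since a half-pencil contributes only a triangle and not a length-$4$ chain on its own.

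The main work is therefore to produce the $A_3$-chain disjoint from those three $A_2$'s. First I would bring in a bisection: by \ref{ss:ell} the fibration $\varphi$ on $S$ carries a bisection $R$ with $R^2=0$ or $-2$, and in the specific surface $S=Y/\jmath$ of Example~\ref{ex-2} one can arrange (the Enriques involution being of base change type) that $R$ is a $(-2)$-curve meeting the half-pencil fiber in a single component $F_4$ with $R.F_4=1$. Then $R$, together with two of the three components of that half-pencil, forms an $A_3$-chain $F_4'-F_4-R$... wait, more carefully: $R.F_4=1$ and $F_4.F_4'=F_4.F_4''=1$, $F_4'.F_4''=1$, so $R,F_4,F_4'$ is a chain of length three ($A_3$) provided $R.F_4'=R.F_4''=0$. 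I would verify this intersection pattern explicitly using the Weierstrass model $\Xt'$ and the section data worked out in Example~\ref{ex-2} (the bisection $R$ is the image of the anti-invariant section $Q$, and its intersections with fiber components are read off from the component of the $I_3$-fibers that $Q'$ meets on the twist). One then checks $R$ is disjoint from the other three triangles: $Q'$ meets only the identity component over three of the four finite $I_3$-fibers (by the height/torsion bookkeeping already carried out), so $R$ avoids the non-identity components which make up the three $A_2$'s.

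The key steps in order: (1) fix $S=Y/\jmath\in\Ft$ and choose the fibration $\varphi$ with $I_3^4$ so that exactly one $I_3$ is a half-pencil (possible by Remark~\ref{rem:I_3}); (2) from the three non-multiple $I_3$-triangles $(F_j,F_j',F_j'')$, $j=1,2,3$, extract three disjoint $A_2$'s by dropping $F_j$ from each, say keeping $F_j',F_j''$; (3) exhibit the bisection $R$ as a $(-2)$-curve with $R.F_4=1$, $R.F_4'=R.F_4''=0$, and $R$ disjoint from all $F_j',F_j''$ ($j\le 3$); (4) conclude that $R+F_4+F_4'$ is an $A_3$-chain disjoint from the three $A_2$'s, giving the $A_3+3A_2$ configuration of nine $(-2)$-curves.

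\textbf{Main obstacle.} The delicate point is step (3): one must guarantee the existence of a bisection $R$ which is \emph{irreducible} (a genuine $(-2)$-curve, not a sum), meets the half-pencil fiber in a single prescribed component, and is globally disjoint from the three chosen $A_2$-configurations. Existence of \emph{some} $(-2)$-bisection follows abstractly from the base-change-type construction, but pinning down its intersection behavior with all eight curves of the three $A_2$'s — equivalently, showing the class $R+F_4+F_4'+\sum_{j=1}^3(F_j'+F_j'')$ has the right Gram matrix $A_3\oplus A_2\oplus A_2\oplus A_2$ and consists of effective irreducible classes — requires either the explicit Weierstrass computation on $\Xt'$ sketched in Example~\ref{ex-2} or a lattice-theoretic argument using $\Num(S)=U+E_8$ and the $3$-divisibility structure from Lemma~\ref{lem-f3333smallcode}. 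I expect the cleanest route is the explicit one, tracking the section $Q'$ of height $7/12$ on the twist and its fiberwise components, since the ramification point $\lambda=5/4$ and the fiber types $I_3^*,I_3,I_3,I_3,I_0^*$ are already determined there.
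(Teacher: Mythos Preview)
Your overall strategy---build the $A_3+3A_2$ from the bisection $R$ (the image of $O\cup Q$) together with components of the four $I_3$-fibres---is the same as the paper's, but you have misidentified which fibre supplies the $A_3$, and this makes step~(3) fail.

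On the twist $\Xt'$ the section $Q'$ of height $7/12$ \emph{must} meet one of the $I_3$-fibres in a non-identity component: note first that $\Xt'$ has singular fibres $I_3^*,\,3\times I_3,\,I_0^*$ of total Euler number $24$, so it is a K3 surface (not rational) with only three $I_3$-fibres, and the height formula reads $h(Q')=4-\text{contr}_{I_3^*}-\text{contr}_{I_0^*}-\sum\text{contr}_{I_3}$. Without the $2/3$ contribution from exactly one $I_3$ one cannot reach $7/12$; this is precisely what Example~\ref{ex-2} records. Consequently on $S$ the bisection $R$ meets two distinct components of exactly one of the three non-multiple $I_3$-fibres. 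That fibre therefore contains \emph{no} $A_2$ perpendicular to $R$, so your claim in step~(3) that $R$ avoids $F_j',F_j''$ for all $j\le 3$ is impossible for this surface. The half-pencil, by contrast, is met by $R$ with total multiplicity $1$, hence in a single component, and so it \emph{does} contribute an $A_2$ orthogonal to $R$.

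The fix is to swap roles: take the three $A_2$'s from the half-pencil and the two non-multiple fibres that $R$ meets in a single component, and build the $A_3$ from the remaining non-multiple fibre, where $R$ together with the three fibre components form a square with one diagonal; omitting one of the two components met by $R$ leaves a chain of length three. This is exactly the paper's argument.
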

 
\begin{proof}
By construction, $S$ is equipped with an elliptic fibration
with 4 singular fibers of Kodaira type $I_3$,
the one at $\infty$ actually with multiplicity two.
Consider the bisection $R$ whose pre-image decomposes into $O$ and $Q$ 
on the covering K3 surface.
Since $O$ and $Q$ are disjoint, $R$ is a smooth rational curve,
and $R^2=-2$.
By the set-up in Example \ref{ex-2},
$R$ meets all but one singular fiber in a single fiber  component.
Therefore there are three A$_2$ configurations supported on the fibers
which are perpendicular to $R$.
The remaining $I_3$ fiber connects with $R$ for a square with one diagonal added. 
Omitting one fiber component meeting $R$, we obtain an A$_3$ configuration.
\end{proof}

\section{The fundamental groups of open Enriques surfaces} \label{s-fundamental groups}

Let $\WW$ be an Enriques surface that   
contains four disjoint A$_2$-configurations $\FFb_1'$, $\FFb_1'', \ldots$, $\FFb_4'$, $\FFb_4''$ and let  
$\pi \, : \, \XXX \rightarrow \WW$ be the K3-cover. 
As in the preceeding sections, 
the $(-2)$-curves in $\pi^{-1}(\FFb'_j)$ are denoted by $\FFb_j'^{+}$, $\FFb_j'^{-}$.
Moreover,  we put
\begin{equation*}
{\mathcal A} := \{ \FFb'_1, \FFb_1'', \ldots \FFb_4', \FFb_4'' \} \mbox{ and }   {\mathcal A}^{\pm} := \{ {\FFb'_1}^{+}, \FFb_1''^{+},{\FFb'_1}^{-}, \FFb_1''^{-}, \ldots,\FFb_4'^{-}, \FFb_4''^{-} \}.
\end{equation*}
Given the pair $(\WW, {\mathcal A})$, we follow \cite{kz} and define
the {\sl fundamental group  of the open Enriques surface $\WW^\circ = \WW \setminus \mathcal A$}:
\begin{equation*} \label{eq-fgdef}
\pi_1(\WW, {\mathcal A}) :=  \pi_1(\WW^\circ). 
\end{equation*}

To deal with Enriques surfaces with four A$_2$-configurations in more generality we introduce the following notation: 

\begin{notation}
 We say that 
the pair $(\WW, {\mathcal A})$ belongs to ${\mathcal F}_{4,3,1}$ (resp. ${\mathcal F}_{3,3,3,3}$)  
iff there exists a composition of Picard-Lefschetz reflections 
\eqref{eq-plenr} and an elliptic pencil $|2 \HE|$ on $\WW$ such that  the elliptic fibration given by  $|2 \HE|$ has singular fibers of the types \eqref{eq-nppss} (resp. \eqref{eq-ppss})
and,  up to multiples  of the half-pencil $\HE$, each class $\PLpsi_{\WW}(\FFb'_j)$, $\PLpsi_{\WW}(\FFb''_j)$, where $j=1,\ldots,4$, 
is an irreducible component of a singular fiber  of the elliptic fibration   $|2 \HE|$.
To simplify our notation we write
$$
(\WW, {\mathcal A}) \in {\mathcal F}_{4,3,1} \;\; (\mbox{resp.} \;(\WW, {\mathcal A}) \in {\mathcal F}_{3,3,3,3})
$$
when the above condition is satisfied. Then $\PLpsi_{\WW}({\mathcal A})$ stands for the set of the four A$_2$-configurations
defined, up to multiples of $H$, 
by $\PLpsi_{\WW}(\FFb'_1)$, $\ldots$, $\PLpsi_{\WW}(\FFb''_4)$ for a fixed composition
of reflections $\PLpsi_{\WW}$. 
\end{notation}

Recall that $\PLpsi_{\WW}$ induces the map  $\PLpsi_{\XXX}$ (see \eqref{eq-ppxxx}). 
In the sequel we maintain the notation \eqref{eq-1-6} and 
use  $\PLpsi_{\XXX}({\mathcal A}^{\pm})$ to denote the set of the eight  A$_2$-configurations on the K3-cover
(again supported on the fibers of an elliptic fibration).

As we explained around Lemma \ref{lem-1-2}, the authors of \cite{kz} claim 
that after applying an appropriate composition of Picard-Lefschetz
reflections $\PLpsi_{\WW}$, the four A$_2$-configurations on the Enriques surface $\WW$ 
become components of singular fibers of the fibration
of type \eqref{eq-nppss}. 
The latter implies the erroneous claim that  ${\mathcal A}$ never contains four $3$-divisible sets  
(\cite[Lemma~3.5~(2)]{kz}) and
the fundamental group $\pi_1(\WW^0)$ of the open Enriques surface is either $\ZZ/6\ZZ$ or $S_3 \times \ZZ/3\ZZ$ 
(see  \cite[Lemma~3.6~(3)]{kz}).
Here we correct these claims.

%

\begin{lemm} \label{lem-41}
 Let $\WW$ be an Enriques surface with four A$_2$-configurations ${\mathcal A}$. Then
\begin{equation} \label{eq-possiblegroups} 
\pi_1(\WW^0) \in \{S_3 \times \ZZ/3\ZZ,  \ZZ/6\ZZ,  (\ZZ/3\ZZ)^{\oplus 2} \times \ZZ/2\ZZ \} \, .
\end{equation}

Moreover, one has the following characterizations: 

\begin{enumerate}
\item[(a)] Both ${\mathcal A}$ and ${\mathcal A}^{\pm}$  contain exactly one $3$-divisible set iff
$\pi_1(\WW^0) = \ZZ/6\ZZ.$

\item[(b)]
 ${\mathcal A}$ contains exactly one $3$-divisible set and 
${\mathcal A}^{\pm}$ contains four $3$-divisible sets iff
$$
\pi_1(\WW^0) = S_3 \times \ZZ/3\ZZ.
$$

\item[(c)] ${\mathcal A}$ contains  four $3$-divisible sets iff
$
\pi_1(\WW^0) = (\ZZ/3\ZZ)^{\oplus 2} \times \ZZ/2\ZZ.
$
\end{enumerate}
\end{lemm}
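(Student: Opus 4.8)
The strategy is to reduce the computation of $\pi_1(\WW^0)$ to a purely lattice-theoretic/coding-theoretic count of $3$-divisible sets, using the K3-cover $\XXX$ as an intermediary. Recall that $\pi_1(\WW^0)$ surjects onto $\pi_1(\WW)=\ZZ/2\ZZ$, and its kernel is $\pi_1(\XXX^\circ)$ where $\XXX^\circ=\XXX\setminus\mathcal A^{\pm}$ is the corresponding open K3-cover (since $\pi$ restricted to $\XXX^\circ$ is an \'etale double cover). So the first step is to identify $\pi_1(\XXX^\circ)$. For a K3 surface minus a collection of disjoint A$_2$-configurations, the abelianization of $\pi_1(\XXX^\circ)$ is governed by the cokernel of the map sending the classes $\FFb_j'^{\pm}-\FFb_j''^{\pm}$ into $\NS(\XXX)$, equivalently by the $\FFF_3$-code of $3$-divisible sets introduced just before \eqref{eq-ffr}; in fact one shows $\pi_1(\XXX^\circ)^{\mathrm{ab}}\cong(\ZZ/3\ZZ)^{c}$ where $c$ is the dimension of that code (and one must also argue the group is actually abelian here, which follows because the local fundamental group of each A$_2$-singularity is $\ZZ/3\ZZ$ and these commute — this is the standard argument from \cite{kz}). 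By Corollary \ref{eq-2poss} and the analysis around \eqref{eq-ffr}, the number of $3$-divisible sets in $\mathcal A^{\pm}$ is $1$, $4$, or $13$ (codes of dimension $0$, $1$, $2$ over $\FFF_3$ with $1,4,13$ lines), so $\pi_1(\XXX^\circ)\in\{0,\ZZ/3\ZZ,(\ZZ/3\ZZ)^{\oplus2}\}$; moreover the pull-back construction shows a $3$-divisible set on $\WW$ always lifts, so the code on $\XXX$ has dimension at least that on $\WW$, and by the $\deck^*$-equivariance arguments already used in the proof of Lemma \ref{lem-f3333smallcode} one can rule out dimension $2$ unless $\mathcal A$ itself already carries four $3$-divisible sets.

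\textbf{Key steps.} First I would establish the extension-theoretic picture: $1\to\pi_1(\XXX^\circ)\to\pi_1(\WW^0)\to\ZZ/2\ZZ\to1$, with $\pi_1(\XXX^\circ)$ computed as above. Second, I would split into the three cases according to how many $3$-divisible sets $\mathcal A$ contains (one or four, by Corollary \ref{eq-2poss}), and within the first case, according to how many $\mathcal A^{\pm}$ contains (one or four, the value $13$ being excluded). The correspondence is: (c) $\mathcal A$ has four $3$-divisible sets $\Rightarrow$ by \eqref{eq-unimod} $\overline M$ is unimodular; then $\mathcal A^{\pm}$ has four too (pull-back), and one gets $\pi_1(\XXX^\circ)=\ZZ/3\ZZ$ but the $\ZZ/2\ZZ$ acts so that the total group is $(\ZZ/3\ZZ)^{\oplus2}\times\ZZ/2\ZZ$ — here the point is that the deck involution $\deck$ acts on $H_1(\XXX^\circ)\otimes\FFF_3$ and one identifies the extension via the explicit models $\Ff$ (where by Lemma \ref{lem-f431} the K3 has four $3$-divisible sets) versus $\Ft$ (Lemma \ref{lem-f3333bigcode}); (a) and (b) are the cases $(\#\mathcal A,\#\mathcal A^{\pm})=(1,1)$ resp. $(1,4)$, giving $\pi_1(\XXX^\circ)=0$ resp. $\ZZ/3\ZZ$ and hence $\pi_1(\WW^0)=\ZZ/2\ZZ\times(\text{something of order }3)$; since a group of order $6$ is either $\ZZ/6\ZZ$ or $S_3$, and in case (a) it is cyclic because $\pi_1(\WW)=\ZZ/2\ZZ$ together with the single $\ZZ/3\ZZ$ from the unique $3$-divisible set generate commuting elements, one gets $\ZZ/6\ZZ$; in case (b) the extra $3$-divisibility on the cover forces a nontrivial action giving $S_3\times\ZZ/3\ZZ$. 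The realization that all these cases actually occur is already provided: $\Ft$ with configuration \eqref{eq-eqsc3smk3} realizes (a) (Lemma \ref{lem-f3333smallcode}), $\Ff$ realizes (b) (Lemma \ref{lem-f431}), and $\Ft$ with \eqref{eq-eqscbigen} realizes (c) (Lemma \ref{lem-f3333bigcode}).

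\textbf{Main obstacle.} The delicate part is \emph{not} the lattice count — that is handled by the earlier lemmas — but pinning down the group extension in each case, i.e. distinguishing $\ZZ/6\ZZ$ from $S_3$ in cases (a)/(b) and confirming the semidirect structure in (c). The obstruction is that $\pi_1(\WW^0)$ is a priori only an extension of $\ZZ/2\ZZ$ by $\pi_1(\XXX^\circ)$, and one must compute the conjugation action of the deck transformation $\deck$ on the $3$-divisibility code $H_1(\XXX^\circ,\FFF_3)\cong\ker(\text{as in }\eqref{eq-ffr})$. Concretely: $\deck^*$ acts on $\mathcal A^{\pm}$ by swapping $\FFb_j'^{+}\leftrightarrow\FFb_j'^{-}$ etc., hence on the code; in case (b) the unique $3$-divisible set on $\WW$ pulls back to a $\deck^*$-invariant one on $\XXX$, but (as in the proof of Lemma \ref{lem-f3333smallcode}) the full code on $\XXX$ contains a class like \eqref{eq-ffr} which is \emph{not} $\deck^*$-invariant — wait, in case (b) the code has dimension $1$ so it \emph{is} invariant; the non-invariance phenomenon is what rules out dimension $2$ in the first place. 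So the real work in (b) is to see that although $\deck^*$ fixes the $1$-dimensional code pointwise, the lift of the $\ZZ/2\ZZ$ to $\pi_1(\WW^0)$ does \emph{not} centralize it — this requires looking beyond the abelianization, using the fact that in the presence of four $3$-divisible sets on the cover the corresponding $\ZZ/3\ZZ$'s from different A$_2$'s on $\WW$ fail to commute. I expect this to be handled by exhibiting, via the explicit $\Ff$ model, generators realizing the $S_3$ relation, or by citing the computation of $\pi_1$ for the known examples in \cite{kz}; in case (a), by contrast, commutativity is automatic and $\ZZ/6\ZZ$ follows formally. The analogous point in (c) — that the action is trivial, yielding a direct product — follows because there $\overline M$ is unimodular, so $\NS(\XXX)$ splits off $\pi^*\Num(\WW)$ orthogonally in a way compatible with $\deck$, forcing the extension to split as a product.
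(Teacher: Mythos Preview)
Your overall architecture --- the exact sequence $1\to\pi_1(\XXX^\circ)\to\pi_1(\WW^0)\to\ZZ/2\ZZ\to1$, the identification of $\pi_1(\XXX^\circ)$ via the $\FFF_3$-code of $3$-divisible sets on the K3-cover, and the determination of the extension via the action of $\deck^*$ --- is exactly the strategy of \cite[Lemma~3.6]{kz}, to which the paper's proof simply refers (adding only that case~(c) cannot be excluded, by Lemma~\ref{lem-f3333bigcode}). So the route is the same.

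However, there is a concrete arithmetic error that derails the argument. You write that ``$1$, $4$, or $13$'' $3$-divisible sets correspond to codes of dimension $0,1,2$ and hence to $\pi_1(\XXX^\circ)\in\{0,\ZZ/3\ZZ,(\ZZ/3\ZZ)^{\oplus 2}\}$. This is off by one: a $d$-dimensional $\FFF_3$-code has $(3^d-1)/2$ lines, so one $3$-divisible set means $d=1$ and $\pi_1(\XXX^\circ)\cong\ZZ/3\ZZ$, while four sets means $d=2$ and $\pi_1(\XXX^\circ)\cong(\ZZ/3\ZZ)^{\oplus 2}$ (and $13$, i.e.\ $d=3$, is already excluded by the Griesmer bound as in the proof of Lemma~\ref{lem-f3333smallcode}). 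With your count, cases~(b) and~(c) would yield $|\pi_1(\WW^0)|=6$, whereas $S_3\times\ZZ/3\ZZ$ and $(\ZZ/3\ZZ)^{\oplus 2}\times\ZZ/2\ZZ$ both have order~$18$; your sentence ``a group of order~$6$ is either $\ZZ/6\ZZ$ or $S_3$'' can therefore never produce the group claimed in~(b).

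Once the count is fixed, the actual bifurcation between (b) and (c) is this: in \emph{both} cases $\pi_1(\XXX^\circ)\cong(\ZZ/3\ZZ)^{\oplus 2}$, and what distinguishes them is whether $\deck^*$ acts trivially on this group. In case~(c) the full $2$-dimensional code on $\XXX$ is the pull-back of the code on $\WW$, hence $\deck^*$-invariant, so the extension splits (Schur--Zassenhaus) as the direct product $(\ZZ/3\ZZ)^{\oplus 2}\times\ZZ/2\ZZ$. In case~(b) only a $1$-dimensional subspace comes from $\WW$; the complementary direction --- e.g.\ the class \eqref{eq:K3-3} --- is sent to its negative by $\deck^*$, and the resulting semidirect product $(\ZZ/3\ZZ)^{\oplus 2}\rtimes\ZZ/2\ZZ$ with action $\mathrm{diag}(1,-1)$ is precisely $\ZZ/3\ZZ\times S_3$. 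Your ``main obstacle'' paragraph circles around this point but, anchored to the wrong orders, does not land on it.
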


\begin{proof}
The proof  follows almost verbatim the first part of the proof of  \cite[Lemma~3.6~(3)]{kz},
but there is one addition to be made:
 Lemma~\ref{lem-f3333bigcode} shows that one cannot use  \cite[Lemma~3.5~(2)]{kz} to rule out 
the existence of an Enriques surface $S$ with 
$\pi_1(\WW^0) = (\ZZ/3\ZZ)^{\oplus 2} \times \ZZ/2\ZZ$.
\end{proof}

With these preparations we can prove the following precise version of Theorem \ref{thm1}:

\begin{thm} \label{thm} 
Let $\WW$ be an Enriques surface with a set of four mutually disjoint A$_2$-configurations ${\mathcal A}$. 
\begin{enumerate}
\item[(a)] If $(\WW, {\mathcal A}) \in {\mathcal F}_{4,3,1}$, then
$
\pi_1(\WW, {\mathcal A}) = S_3 \times \ZZ/3\ZZ$.
\item[(b)] If $\WW \in {\mathcal F}_{3,3,3,3}$, then there exist ${\mathcal A}'$ and ${\mathcal A}''$ such that 
$$
\pi_1(\WW, {\mathcal A}') = (\ZZ/3\ZZ)^{\oplus 2} \times \ZZ/2\ZZ \mbox{ and } \pi_1(\WW, {\mathcal A}'') = \ZZ/6\ZZ.
$$ 
\end{enumerate}
In particular, all groups given in Lemma \ref{lem-41} are realized by Enriques surfaces.
\end{thm}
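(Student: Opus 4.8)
The statement to prove is Theorem \ref{thm}, which splits the realisability of the three fundamental groups from Lemma \ref{lem-41} according to whether the configuration comes from $\Ff$ or $\Ft$. The key observation is that Theorem \ref{thm0} guarantees every Enriques surface with four disjoint A$_2$-configurations lies in $\Ft\cup\Ff$, so by combining the characterisations (a)--(c) of Lemma \ref{lem-41} with the counts of $3$-divisible sets computed in Lemmata \ref{lem-f3333bigcode}, \ref{lem-f3333smallcode} and \ref{lem-f431}, we can read off the fundamental group in each case. Thus the whole proof is essentially a matter of matching lattice-theoretic data, already available, to the group-theoretic dictionary of Lemma \ref{lem-41}.

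\textbf{Part (a).}
Here I would take $(\WW,\mathcal A)\in\Ff$ and apply a composition $\PLpsi_{\WW}$ of Picard--Lefschetz reflections as in Lemma \ref{lem-1-2} so that the images $\PLpsi_{\WW}(\FFb'_j),\PLpsi_{\WW}(\FFb''_j)$ become (up to multiples of the half-pencil $\HE$) fiber components of a fibration of type \eqref{eq-nppss}. Since $\PLpsi_{\WW}$ is an isometry of $\mathrm H^2(\WW,\ZZ)$, it preserves $3$-divisibility, so the number of $3$-divisible sets in $\mathcal A$ equals that in $\PLpsi_{\WW}(\mathcal A)$, and likewise for $\mathcal A^\pm$ on the K3 cover via $\PLpsi_{\XXX}$ and \eqref{eq-useful-K3}. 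By Lemma \ref{lem-f431}, $\PLpsi_{\WW}(\mathcal A)$ contains exactly one $3$-divisible set while $\PLpsi_{\XXX}(\mathcal A^\pm)$ contains four. By Lemma \ref{lem-41}(b) this forces $\pi_1(\WW^0)=S_3\times\ZZ/3\ZZ$.

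\textbf{Part (b).}
Now let $\WW\in\Ft$. Fixing an elliptic fibration of type \eqref{eq-ppss} (after reflections, via Lemma \ref{lem-1-2}), the four A$_2$-configurations $\FFb'_1,\FFb''_1,\ldots,\FFb'_4,\FFb''_4$ supported on the $I_3$ fibers give a set $\mathcal A'$ which, by Lemma \ref{lem-f3333bigcode}, contains four $3$-divisible sets; Lemma \ref{lem-41}(c) then yields $\pi_1(\WW,\mathcal A')=(\ZZ/3\ZZ)^{\oplus2}\times\ZZ/2\ZZ$. For the second choice, invoke Remark \ref{rem:I_3} to arrange that the $I_3$ fiber indexed $4$ is not a half-pencil, then take $\mathcal A''$ to be the configuration \eqref{eq-eqsc3sm}, i.e.\ replace the pair $F'_4,F''_4$ by $F'_4,F_4$. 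Lemma \ref{lem-f3333smallcode}(a) shows $\mathcal A''$ contains exactly one $3$-divisible set, and part (b) of that lemma shows the corresponding $\mathcal A''^\pm$ on the K3 cover contains exactly one as well. By Lemma \ref{lem-41}(a), $\pi_1(\WW,\mathcal A'')=\ZZ/6\ZZ$. The one point needing care — and the only genuine obstacle — is to check that the sets $\mathcal A'$, $\mathcal A''$ produced this way really do satisfy the hypotheses (disjointness, the labelling conventions of Notation, and that all eight curves are honest smooth rational curves rather than merely effective $(-2)$-divisors); this is handled by the construction of $\Ft$ in \S\ref{ss:1st}, where the $I_3$ fibers are genuine fibers of the logarithmic transform of the Hesse pencil. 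Finally, since $\Ft$ and $\Ff$ exhaust all cases by Theorem \ref{thm0} and Examples \ref{example-kondo}, \ref{ex-2} exhibit non-empty members of each family, all three groups of Lemma \ref{lem-41} are realised, proving the last assertion.
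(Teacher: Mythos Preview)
Your treatment of part (b) is correct and agrees with the paper's argument essentially verbatim.

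Part (a), however, has a genuine gap. You write that since $\PLpsi_{\WW}$ is an isometry, the number of $3$-divisible sets in $\mathcal A$ equals that in $\PLpsi_{\WW}(\mathcal A)$, and then invoke Lemma \ref{lem-f431}. But recall from Lemma \ref{lem-1-2} and \eqref{eq-useful-en} that $\PLpsi_{\WW}$ only sends each $\FFb'_j,\FFb''_j$ to a fiber component \emph{up to a multiple of the half-pencil} $\HE$: one has $\PLpsi_{\WW}(\FFb'_j)=\Theta_{2j-1}+n_{2j-1}\HE$ and similarly for $\FFb''_j$. Thus the isometry carries $3$-divisible sets in $\mathcal A$ to $3$-divisible combinations of the divisors $\Theta_i+n_i\HE$, \emph{not} of the bare fiber components $\Theta_i$ to which Lemma \ref{lem-f431} applies. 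Since $\HE$ is not $3$-divisible in $\Num(\WW)$, the integers $n_i$ genuinely affect the count: indeed the paper observes that modulo $\HE$ the fiber components support \emph{four} $3$-divisible sets (coming from $D_0=\Theta_1-\Theta_2+\Theta_3-\Theta_4+\Theta_5-\Theta_6+\HE$ and the $D_i$ in \eqref{eq:D_1} etc.), whereas among the $\Theta_i$ themselves there is only one. So a priori the $\HE$-shifts could conspire to give $\mathcal A$ four $3$-divisible sets rather than one.

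The paper handles this by a direct computation: writing $\PLpsi_{\WW}(\FFb'_i-\FFb''_i)=\Theta_{2i-1}-\Theta_{2i}+m_i\HE$, the requirement that all four of $D_0,D_1,D_2,D_3$ lift to $3$-divisible classes in $\mathcal A$ translates into a linear system in the $m_i$ over $\ZZ/3\ZZ$ which is shown to be inconsistent. For the K3 side the paper likewise does not simply quote Lemma \ref{lem-f431}; instead it exhibits explicitly the $3$-divisible class \eqref{eq:K3-3}, noting that the $\pi^*\HE$-contributions from the $+$ and $-$ sheets cancel, and combines it with the pull-back of a $3$-divisible set in $\mathcal A$ (guaranteed by Corollary \ref{eq-2poss}) to obtain four $3$-divisible sets in $\mathcal A^\pm$. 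Your argument for (a) needs these additional steps to close.
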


\begin{proof}
We start with (b) which is much easier to prove.
Indeed, 
the existence of ${\mathcal A}'$ (resp. ${\mathcal A}''$) results immediately  
from Lemma~\ref{lem-f3333bigcode} and Lemma~\ref{lem-41}~(c)
(resp. Lemma~\ref{lem-f3333smallcode}, Remark \ref{rem:I_3} and Lemma~\ref{lem-41}~(a)).

 In order to prove  part (a) we assume that $(\WW, {\mathcal A}) \in {\mathcal F}_{4,3,1}$, i.e. that there exist the fibration $|2 \HE|$ and  the map 
$\PLpsi_{\WW}$. By   Lemma~\ref{lem-41} it suffices to show that $\mathcal A^{\pm}$ contains four 3-divisible sets,
but $\mathcal A$ contains only one 3-divisible set.

We label the components of the type-$IV^*$ fiber of the fibration $|2 \HE|$ as in \eqref{eq-myfiber}. Moreover, 
we put  $\Theta_7, \ldots, \Theta_9$ to denote the components of the $I_3$ fiber. We can assume that \eqref{eq-useful-en},  \eqref{eq-useful-K3}
hold. Then, the divisor 
\begin{equation}
\label{eq:K3-3}
\Theta_1^+ - \Theta_2^+ + \Theta_3^+ - \Theta_4^+ + \Theta_5^+ - \Theta_6^+ - \Theta_1^- + \Theta_2^- - \Theta_3^- +  \Theta_4^- -\Theta_5^- +  \Theta_6^- 
\end{equation}
is 3-divisible (see the proof  of \cite[Lemma~3.5.1]{kz}). From  \eqref{eq-useful-K3} we infer that  $\mathcal A^{\pm}$ contains a 3-divisible set
that cannot be obtained as the image of a 3-divisible set in  $\mathcal A$ under the pull-back $\proj^*$. On the other hand, the eight A$_2$ configurations contain the pull-back $\proj^*$ of a
3-divisible set contained in    $\mathcal A$. The latter exists by Cor.~\ref{eq-2poss}. 
This proves the first claim.

The second claim is a little more subtle due to the multiples of the half-pencil $H$ involved.
Indeed, the 4 A$_2$ configurations have to contain a 3-divisible set by Lemma~\ref{lem-f431}, but
the three A$_2$ configurations $\Theta_1,\hdots,\Theta_6$ supported on the $IV^*$ fiber are themselves \emph{not}
3-divisible because the divisor
\begin{eqnarray}
\label{eq:D_0}
D_0 = \Theta_1-\Theta_2+\Theta_3-\Theta_4+\Theta_5-\Theta_6+H
\end{eqnarray}
visibly is (in the 3-divisible divisor \eqref{eq:K3-3} on the covering K3, the contributions from $H$ even out).
That is, without multiples of the half-pencil, the 4 A$_2$ configurations contain only one 3-divisible set
(by Lemma~\ref{lem-f431})
while modulo the half-pencil, there are four 3-divisible sets.
Therefore we will have to conduct a careful analysis of the precise multiples of $H$ involved.

After rearranging the $(-2)$-curves, if necessary, we can assume that the divisor
\begin{eqnarray}
\label{eq:D_1}
D_1 =  \Theta_1-\Theta_2-\Theta_3+\Theta_4+\Theta_7-\Theta_8
\end{eqnarray}
is 3-divisible.
By symmetry, the same applies to a compatible pair of divisors
\begin{eqnarray*}
D_2 & = &  \Theta_3-\Theta_4-\Theta_5+\Theta_6+
\begin{cases}
\Theta_8-\Theta_9, & \text{ case (i)}\\
\Theta_9 - \Theta_7, &  \text{ case (ii)}
\end{cases}\\
D_3 & = & \Theta_5-\Theta_6-\Theta_1+\Theta_2+
\begin{cases}
\Theta_9 - \Theta_7, &  \text{ case (i)}\\
\Theta_8-\Theta_9, &  \text{ case (ii)}
\end{cases}
\end{eqnarray*}
Which case actually persists depends on the question 
whether the $I_3$ fiber supported on $\Theta_7,\Theta_8,\Theta_9$ is multiple or not,
i.e. whether
\[
\Theta_7+\Theta_8+\Theta_9 = H \;\; \text{ or } \;\; 2H.
\]
One easily verifies that if the fiber is multiple, case (i) persists
while the unramified fiber type leads to case (ii).
It remains to check whether having four 3-divisible sets is compatible with fixed $H$-multiplicities $m_1,\hdots,m_4$ 
attached to the 4 A$_2$-configurations.
To this end, we may assume that
\[
\mf(F_i'-F_i'') = \Theta_{2i-1}-\Theta_{2i}+m_iH, \;\;\; i=1\hdots,4.
\]
Since we only care about 3-divisibility, all equations in the remainder of the proof of Theorem \ref{thm} 
should be understood in $\ZZ/3\ZZ$.
Then \eqref{eq:D_0} leads to 
\begin{eqnarray*}
\label{eq0}
m_1+m_2+m_3 & = & 1
\end{eqnarray*}
while \eqref{eq:D_1} gives
\begin{eqnarray*}
\label{eq1}
m_1-m_2+m_4 & = & 0
\end{eqnarray*}
The two cases for $D_2, D_3$ lead to the same equations
although (or because) they depend on the multiplicity of the $I_3$ fiber:
\begin{eqnarray*}
m_2-m_3+m_4 & = & 1\\
-m_1 + m_3 + m_4  & = & -1
\end{eqnarray*}
Thus we obtain a system of four linear equations over $\ZZ/3\ZZ$.
One immediately verifies that it has no solution.
Hence, regardless of the multiples of $H$ involved,
the four A$_2$ configurations cannot support four 3-divisible sets.
By Lemma \ref{lem-41}, this concludes the proof of Theorem \ref{thm}.
\end{proof}

%


Finally, we use the jacobian fibration to verify that surfaces in ${\mathcal F}_{4,3,1} \cap {\mathcal F}_{3,3,3,3}$ 
have some special properties;
notably the families ${\mathcal F}_{4,3,1}, {\mathcal F}_{3,3,3,3}$ only overlap on proper subfamilies:

\begin{prop} \label{prop-ro18}
Let $\WW$ be an Enriques surface 
 and let $\XXX$ be the K3-cover of $\WW$.
If $\WW \in {\mathcal F}_{4,3,1} \cap {\mathcal F}_{3,3,3,3}$, then  $\rho(\XXX) \geq  19$.
\end{prop}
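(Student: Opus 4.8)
The plan is to exploit the fact that membership of $S$ in both families forces \emph{two} different elliptic fibrations on $S$ (one with fiber configuration of type \eqref{eq-ppss}, i.e. after suitable Picard--Lefschetz reflections an $I_3^4$-type fibration possibly with multiplicities, and one of type \eqref{eq-nppss}, i.e. $IV^*\oplus I_3\oplus I_1$-type with multiplicities), and to compare the lattice data these impose on the K3 cover $\XXX$. The starting point is that $\rho(\XXX)\ge 18$ always holds in the situations of \ref{ss:1st} and \ref{ss:2nd} by the Shioda--Tate formula applied to the $8$ fibers of type $I_3$ (respectively the $2\,IV^*+2\,I_3$ and $2\,I_1$) that $\XXX$ inherits from $S$; this was observed just before Lemma~\ref{lem:324}. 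So the whole content is to rule out $\rho(\XXX)=18$.

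First I would assume $\rho(\XXX)=18$ and extract the discriminant. From the $\Ft$-side, Lemma~\ref{lem:324} gives $\operatorname{d}(\NS(\XXX))=-324$ (the case $-81$ being excluded by Proposition~\ref{obs-PicK3cover}, since $-81$ is odd and would force $\rho(\XXX)=20$). Independently, the $\Ff$-side gives $\XXX$ as the K3 cover of an Enriques surface $S\in\Ff$, whose jacobian elliptic surface is the $IV^*\oplus I_3\oplus I_1$ rational surface $\Xf$ with $\MW\cong\ZZ/3\ZZ$; running the base-change/twist picture of \ref{ss:ell} and diagram \eqref{seconddiagram}, the K3 surface $\Jac(\XXX)$ acquires two $IV^*$, two $I_3$ and two $I_1$ fibers, and with $\rho=18$ its Mordell--Weil group can only be $3$-torsion (the fiber configuration admits no other torsion and no free part at $\rho=18$ by Shioda--Tate). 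This pins down $\operatorname{d}(\NS(\Jac(\XXX)))$ purely numerically from the trivial lattice $U\oplus E_6\oplus E_6\oplus A_2\oplus A_2$ and the $\ZZ/3\ZZ$ torsion, and then Keum's Lemma~\cite[Lemma~2.1]{keum} together with Proposition~\ref{obs-PicK3cover} forces $\operatorname{d}(\NS(\XXX))$ to a specific value, which I expect to be \emph{incompatible} with $-324$ (the two computations land in different $2$-adic or $3$-adic classes).

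The cleaner way to package this, and probably the route I would actually write, is via the $3$-divisible sets rather than raw discriminants, since those are already computed in the paper. If $S\in\Ft$ then by Lemma~\ref{lem-f3333bigcode} the four A$_2$-configurations coming from the $I_3^4$-fibration contain \emph{four} $3$-divisible sets, so by \eqref{eq-unimod} their primitive closure $\overline M\subset\Num(S)$ is the unimodular lattice $E_8$; pulling back, $\NS(\XXX)$ contains $2E_8$ as the primitive closure of the $8$ A$_2$'s, and hence $8$ A$_2$-configurations on $\XXX$ with four pull-back $3$-divisible sets, \emph{none} of which can be extended to nine A$_2$'s. On the other hand, if $S\in\Ff$, then by Lemma~\ref{lem-f431} the K3 cover carries a configuration of $8$ A$_2$'s with exactly four $3$-divisible sets as well, but supported on fibers of type $2\,IV^*\oplus 2\,I_3$; the point is that the Picard number $\rho(\XXX)=18$ would make the relevant sublattice of $\NS(\XXX)$ saturated in a way that — combined with \cite[Lemma~1]{b} on the shape of $3$-divisible sets on K3 surfaces (only six or nine A$_2$'s) — over-determines $\NS(\XXX)$, forcing its rank up. Concretely: the $2\,IV^*$ fibers alone span $E_6\oplus E_6$ of rank $12$, the $2\,I_3$'s add $A_2\oplus A_2$, plus $U$ from a fiber and a bisection, already giving rank $\ge 18$ with a definite discriminant; layering the $E_8$-polarization forced by the $\Ft$-structure on top of this is what I expect to be arithmetically impossible unless an extra generator appears, i.e. $\rho(\XXX)\ge 19$.

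The main obstacle will be making the last comparison rigorous without simply invoking a big lattice-embedding computation: one needs to argue that the $2E_8$ (from the $\Ft$-structure) and the $E_6^{\oplus 2}\oplus A_2^{\oplus 2}\oplus U$-type trivial lattice (from the $\Ff$-fibration) cannot both sit primitively inside a rank-$18$ even hyperbolic lattice with the prescribed discriminant form $U(2)\oplus E_8(2)$ (the Enriques polarization) also embedded. I would handle this by a discriminant-group / $p$-length count in the style of Proposition~\ref{obs-PicK3cover} and Nikulin's \cite{nikulin}: the $\Ft$-side forces $\operatorname{d}(\NS(\XXX))\in\{-324\}$ at $\rho=18$ by Lemma~\ref{lem:324}, while the $\Ff$-side, via $\Jac(\XXX)$ and \cite[Lemma~2.1]{keum}, forces a value with a different $3$-adic valuation or a $3$-length too large to fit the rank-$2$ orthogonal complement — a direct contradiction, so $\rho(\XXX)\ge 19$. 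If that numerical clash fails to be immediate, the fallback is the $3$-divisibility argument above: $\rho(\XXX)=18$ would make \emph{all} four pull-back $3$-divisible sets of the $\Ff$-side coincide with those forced by the $E_8$-polarization of the $\Ft$-side, yielding nine mutually disjoint A$_2$'s in a $3$-divisible configuration on $\XXX$, contradicting \cite[Lemma~1]{b}.
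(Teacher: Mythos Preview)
Your second paragraph is the paper's proof, and it is complete once you actually carry out the number you set up. On the $\Ff$-side, $\Jac(\XXX)$ has trivial lattice $U\oplus E_6^{\oplus 2}\oplus A_2^{\oplus 2}$ of discriminant $-81$ and (by pull-back from $\Xf$ together with the fiber configuration) $\MW(\Jac(\XXX))\cong\ZZ/3\ZZ$ when $\rho=18$; hence $\operatorname{d}(\NS(\Jac(\XXX)))=-81/9=-9$. Keum's lemma then gives $\operatorname{d}(\NS(\XXX))\in\{-9,-36\}$, and Proposition~\ref{obs-PicK3cover} rules out $-9$ at $\rho=18$, so $\operatorname{d}(\NS(\XXX))=-36$. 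This is already incompatible with the $-324$ forced by Lemma~\ref{lem:324} on the $\Ft$-side, and the proposition follows. There is no need to hedge with ``I expect'': the clash is the whole argument, and the paper's own proof is exactly this two-line discriminant comparison.

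Your paragraphs 3--4 are detours you should drop. The $3$-divisibility / $E_8$-polarisation route does not package the argument more cleanly: you end up having to compare two different sublattices of $\NS(\XXX)$ coming from two different fibrations, with no a priori relation between the two sets of A$_2$-configurations on $S$, so statements like ``the $2E_8$ from the $\Ft$-structure and the $E_6^{\oplus 2}\oplus A_2^{\oplus 2}\oplus U$ from the $\Ff$-fibration cannot both sit primitively\ldots'' would require exactly the kind of discriminant/length bookkeeping you were trying to avoid. The final fallback is simply wrong: there is no mechanism producing a ninth disjoint A$_2$ on $\XXX$ (both structures give eight), so \cite[Lemma~1]{b} is not triggered in the way you suggest.
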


\begin{proof} 
We compare the discriminants of the K3-covers.
For $\WW\in\Ft$ with K3-cover $\XXX$ of Picard number $\rho(\XXX)=18$,
Lemma~\ref{lem:324} gives $\operatorname{d}(\mbox{NS}(\XXX)) = -324$.

A completely analogous argument applies to $\WW' \in  {\mathcal F}_{4,3,1}$ 
 with K3-cover $\XXX'$ such that  $\rho(\XXX') = 18$. 
We find that $\operatorname{d}(\mbox{NS}(\XXX')) = -36$.
This implies that $\WW' \not \in  {\mathcal F}_{3,3,3,3}$ and vice versa for $\WW$.
\end{proof}

In the next section, we shall take this result as a starting point to take a closer look at
the moduli of our families $\Ft$ and $\Ff$.

\section{Moduli}
\label{s:moduli}

\subsection{Algebro-geometric construction}
\label{ss:6321}

We start by giving an algebro-geometric description of the family $\Ff$.
As opposed to the analytic construction of logarithmic transformations,
it will be based on Enriques involutions of base change type as outlined in \ref{ss:ell}.
Our starting point is another extremal rational elliptic surface $\Xs$,
this time with $\MW(\Xs)\cong \ZZ/6\ZZ$.
As  a cubic pencil, it can be given by 
\begin{eqnarray}
\label{eq:6321}
\Xs:\;\; (x+y)(y+z)(z+x)+t xyz=0.
\end{eqnarray}
More precisely, $\Xs$ is the relatively minimal resolution of the above cubic pencil model in $\PP^2\times\PP^1$,
obtained by blowing up the three double base points at $[1,0,0],[0,1,0],[0,0,1]$.
The blow-up results in a fiber of Kodaira type $I_6$ at $\infty$; the other singular fibers are $I_3$ at $t=0$, $I_2$ at
$t=1$ and $I_1$ at $t=-8$.
The other three base points of the cubic pencil are actually points of inflection.
Fixing one of them as zero $O$ for the group law, say $[1,-1,0]$, we find that
$P=[0,0,1]$ has order $2$ inside $\MW(\Xs)$.
Thus it lends itself to (the classical case of) the construction of an Enriques involution of base change type (as reviewed in \ref{ss:ell}).
To this end,
consider a quadratic base change \eqref{eq:base-change}
that does not ramify at the $I_3$ and $I_1$ fibres.
Denote the pull-back surface by $\Ys$;
this is an elliptic K3 surface, generically with all singular fibres of $\Xs$ duplicated.
The deck transformation $\imath$ enables us to define a fixed point-free involution $\psi$ on $\Ys$ by
\[
\psi =  (\text{fibrewise translation by } P) \circ \imath
\]
The quotient surface will be an Enriques surface $\WW=\Ss$
with elliptic fibration 
\[
\pi_0: \WW \to \PP^1
\]
 with the same singular fibers as $\Xs$ (generically not multiple);
here $O$ and $P$ map to a smooth rational bisection $R$.

\begin{lemm}
$S$ contains 4 perpendicular A$_2$ configurations.
\end{lemm}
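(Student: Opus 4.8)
The plan is to produce the four disjoint $A_2$ configurations directly from the elliptic fibration $\pi_0 \colon \WW=\Ss \to \PP^1$, using the bisection $R$ and the singular fibres inherited from $\Xs$. Recall that $\Xs$ has singular fibres $I_6$ at $\infty$, $I_3$ at $t=0$, $I_2$ at $t=1$, $I_1$ at $t=-8$, and that by construction $\WW$ carries an elliptic fibration with the same fibre types (generically not multiple), together with a smooth rational bisection $R$ coming from the images of $O$ and $P$. The first step is to understand how $R$ meets the singular fibres: since $O$ and $P$ are disjoint $2$-torsion sections on $\Xs$ (or rather on the pull-back $\Ys$), and $\imath$-anti-invariance is built into the base-change construction, the image $R$ has $R^2=-2$ (it is a $(-2)$-bisection, exactly as in the set-up reviewed in \S\ref{ss:ell} and as in Proposition~\ref{prop}) and meets the $I_6$ and the other fibres through well-controlled components, determined by the component groups in which $O$ and $P$ sit.

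The second step is to extract the $A_2$ configurations. From the $I_6$ fibre (Kodaira type, hence a cycle of six $(-2)$-curves $\Theta_0,\dots,\Theta_5$), after removing the at most two components that $R$ meets one obtains a disjoint union of chains; I expect $R$ to meet two \emph{opposite} components of the hexagon (this follows from the positions of $O$ and $P$ in $\MW(\Xs)\cong\ZZ/6\ZZ$ and the standard description of how torsion sections hit the $I_6$ fibre), so that the complement is exactly two $A_2$ chains, both perpendicular to $R$. That already gives two of the four $A_2$ configurations. The $I_3$ fibre at $t=0$ is a triangle of three $(-2)$-curves; since $R\cdot(\text{fibre})=2$ and $R$ is irreducible, $R$ meets the $I_3$ fibre either in one component with multiplicity $2$ or in two distinct components with multiplicity $1$ each — in either case one checks directly that there is an $A_2$ sub-configuration of the triangle perpendicular to $R$ (remove the component(s) $R$ touches, or in the multiplicity-$2$ case remove that single component to leave the remaining two curves which still form an $A_2$). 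By the same reasoning the $I_2$ fibre at $t=1$, after deleting the component(s) met by $R$, contributes... but an $I_2$ fibre only has two components, so this cannot directly yield an $A_2$; instead the fourth $A_2$ must come from a second chain inside the $I_6$ or be produced by a Picard--Lefschetz reflection as in Lemma~\ref{lem-1-2}. So the cleaner route is: two $A_2$'s from the $I_6$, one from the $I_3$, and the fourth from applying the reflection machinery of Lemma~\ref{lem-1-2} (equivalently, observing that $\WW\in\Ff$ by the Jacobian computation, and $\Ff$ is precisely the family whose surfaces carry a fibration of type \eqref{eq-nppss} with four $A_2$ configurations in the fibres).

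Concretely, I would argue as follows. First show $\WW\in\Ff$: the Jacobian of $\pi_0$ is $\Xs$, whose configuration $I_6+I_3+I_2+I_1$ is \emph{not} on the list \eqref{eq:configs}, so $\pi_0$ itself is not the fibration witnessing membership in $\Ff$; rather, one performs an elliptic-parameter change / Picard--Lefschetz reduction (as in the proof of Theorem~\ref{thm0} and Lemma~\ref{lem-1-2}) to pass from $\pi_0$ to a fibration of type in \eqref{eq-nppss}, namely $IV^*\oplus I_3\oplus I_1$ (matching $\Xf$). Under this change the $A_2$ configurations become fibre components of the new $IV^*$ and $I_3$ fibres — three from the $IV^*$ (split as $\Theta_1+2\Theta_2+\Theta_3+2\Theta_4+\Theta_5+2\Theta_6+3\Theta_0$, giving the three disjoint pairs $\{\Theta_1,\Theta_2\},\{\Theta_3,\Theta_4\},\{\Theta_5,\Theta_6\}$) and one from the $I_3$ fibre. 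By Lemma~\ref{lem-1-2} these classes are (up to multiples of the half-pencil) represented by smooth rational curves, and they are pairwise disjoint by the orthogonality of distinct fibre components and of components of distinct fibres. This exhibits the required four perpendicular $A_2$ configurations on $\WW$.

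The main obstacle I anticipate is pinning down the intersection behaviour of $R$ with the $I_6$ fibre precisely enough to be sure the complement of $R\cap(\text{that fibre})$ genuinely decomposes into $A_2$ pieces — this is where one must use the explicit position of the $2$-torsion section $P$ in the component group $\ZZ/6\ZZ$ of the $I_6$ fibre (and the analogous data for the $I_3,I_2,I_1$ fibres), rather than waving hands. Equivalently, if one prefers the reflection route, the obstacle is carrying out the elliptic-parameter change from $\pi_0$ to the $IV^*\oplus I_3\oplus I_1$ fibration explicitly (or citing that $\Ss$ lies in $\Ff$ by a discriminant / lattice argument as in Lemma~\ref{lem:324} and Proposition~\ref{prop-ro18}, which identifies $\Ss$ with a member of $\Ff$). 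Either way, once the correct fibration of type \eqref{eq-nppss} is in hand, the existence of the four disjoint $A_2$ configurations is immediate from Lemma~\ref{lem-1-2}.
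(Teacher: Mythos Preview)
Your first three $A_2$ configurations are exactly right, and your analysis of how $R$ meets the $I_6$ and $I_3$ fibres is correct. The gap is the fourth $A_2$: you correctly record that $R$ is a smooth rational $(-2)$-curve, but then fail to use this. The paper's fourth $A_2$ is simply $R$ together with one component $\Theta$ of the $I_2$ fibre. Since $O$ and $P$ meet distinct components of the $I_2$ fibre on $\Xs$ (the $2$-torsion section hits the non-identity component), the bisection $R$ meets each component of the $I_2$ fibre on $\WW$ exactly once; hence $R.\Theta=1$ and $\{R,\Theta\}$ is an $A_2$. It is perpendicular to the other three because $R$ touches the $I_6$ and $I_3$ fibres only in the components you already discarded, and $\Theta$ lies in a different fibre from all of those.

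Your fallback route via Lemma~\ref{lem-1-2} and membership in $\Ff$ is circular. Lemma~\ref{lem-1-2} takes four disjoint $A_2$ configurations as \emph{input} and produces a fibration of type \eqref{eq-ppss} or \eqref{eq-nppss}; it does not manufacture the $A_2$'s. Likewise, in the paper's logical order the present lemma is precisely what (via Theorem~\ref{thm0}) places $\WW$ in $\Ft\cup\Ff$, and only afterwards does Lemma~\ref{lem:431} pin down $\WW\in\Ff$ by exhibiting a $IV^*$ divisor built from these very curves. So you cannot invoke $\WW\in\Ff$ here without first producing the four $A_2$'s directly --- and the missing ingredient for that is just to let $R$ itself be one of the eight curves.
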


\begin{proof}
Consider the singular fibres of $\WW$ together with the bisection $R$.
The following figure depicts how they intersect
and indicates the 4 A$_2$ configurations.
\end{proof}

\begin{figure}[ht!]
\setlength{\unitlength}{.45in}
\begin{picture}(8,2.7)(3,0)
\thicklines


\multiput(4,2)(2,0){2}{\circle*{.1}}
\multiput(4,1)(2,0){2}{\circle*{.1}}
\put(4,2){\line(0,-1){1}}
\put(6,2){\line(0,-1){1}}
\put(5,0.4){\circle*{.1}}
\put(5,2.6){\circle*{.1}}


\put(4,2){\line(5,3){1}}
\put(5,0.4){\line(5,3){1}}
\put(4,1){\line(5,-3){1}}
\put(5,2.6){\line(5,-3){1}}

%

\put(7.5,2){\circle*{.1}}
\qbezier(5,2.6)(6.5,2.6)(7.5,2)
\qbezier(5,.4)(6.5,.4)(7.5,2)


\put(9,2){\circle*{.1}}
\put(7.5,1.95){\line(1,0){1.5}}
\put(7.5,2.05){\line(1,0){1.5}}

\put(10,2.8){\circle*{.1}}
\put(10,1.2){\circle*{.1}}

\put(9,2){\line(5,4){1}}
\put(9,2){\line(5,-4){1}}

\put(10,2.8){\line(0,-1){1.6}}


\put(7.5,.5){\circle*{.1}}
\put(8.5,.5){\circle*{.1}}

\put(7.5,.45){\line(1,0){1}}
\put(7.5,.55){\line(1,0){1}}

\put(7.5,2){\line(0,-1){1.5}}
\put(7.5,2){\line(2,-3){1}}

%
\put(6.9,1.85){$R$}
\put(8.5,0){$I_2$}
\put(9.2,1.2){$I_3$}
\put(4.05,.4){$I_6$}
%
\thinlines
\put(3.8,0.8){\framebox(.4,1.4){}}
\put(5.8,0.8){\framebox(.4,1.4){}}
\put(9.8,1){\framebox(.4,1.9){}}

\put(7.3,0.3){\framebox(.4,1.9){}}
\end{picture}
\caption{4 disjoint A$_2$-configurations on $\Ss$}
\label{Fig:1}
\end{figure}
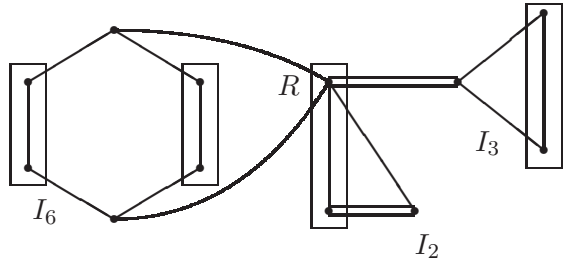

By Theorem \ref{thm0}, we conclude that $S\in\Ft$ or $S\in \Ff$.
For discriminant reasons (compare the proof of Proposition \ref{prop-ro18}), the second alternative should hold.
Here we will give a purely geometric argument:

\begin{lemm}
\label{lem:431}
$S\in\Ff$.
\end{lemm}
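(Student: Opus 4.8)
The goal is to show that the Enriques surface $S = \Ss$ constructed from $\Xs$ with $\MW(\Xs)\cong\ZZ/6\ZZ$ lies in $\Ff$ rather than $\Ft$. By Theorem~\ref{thm0} we already know $S\in\Ft\cup\Ff$, so it suffices to exclude $S\in\Ft$. The plan is to do this by exhibiting, geometrically on $S$, the elliptic fibration of type $IV^*\oplus I_3\oplus I_1$ predicted by Lemma~\ref{lem-1-2} for members of $\Ff$ --- or, what is cleaner, by locating on the covering K3 surface $\Ys$ (or its jacobian) a singular fiber configuration that is incompatible with $\Ft$.

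\textbf{First step: pass to the jacobian rational elliptic surface of a suitable fibration on $S$.} The bisection $R$ together with the four disjoint A$_2$-configurations displayed in Figure~\ref{Fig:1} gives, via the reflection machinery of Lemma~\ref{lem-1-2} and \S\ref{ss:PL-K3}, an elliptic pencil $|2H|$ on $S$ whose jacobian is an extremal rational elliptic surface $\RES$ sharing the four A$_2$-configurations in its fibers. By the Shioda--Tate argument in the proof of Lemma~\ref{lem-1-2}, $\RES$ is one of the five configurations in \eqref{eq:configs}, and since the four A$_2$'s actually come from $\Ss$ --- which has $I_6,I_3,I_2,I_1$ --- one shows the reflections collapse the $I_6$ fiber (three A$_2$'s in a hexagon after omitting components) to an $IV^*$ fiber in the jacobian. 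The key point: I would argue that the Mordell--Weil group of $\Xs$, namely $\ZZ/6\ZZ$, forces a $6$-torsion (equivalently $2$-torsion together with $3$-torsion) structure that survives the base change and logarithmic transformation, so the jacobian of the new fibration must itself carry a $2$-torsion section, which among \eqref{eq:configs} is compatible only with $IV^*\oplus I_3\oplus I_1$ (not with $I_3^4$, whose jacobian $\Xt$ has $\MW\cong(\ZZ/3\ZZ)^2$ with no $2$-torsion). That is exactly the defining configuration \eqref{eq-nppss} of $\Ff$.

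\textbf{Alternative cleaner step: discriminant comparison.} Parallel to Proposition~\ref{prop-ro18}, compute $\operatorname{d}(\NS(\Ys))$ for generic $\Ys$. The covering K3 $\Ys$ has eight fibers ($2\cdot I_6$ becoming... more precisely the base change duplicates $I_6,I_3,I_2,I_1$, giving two $I_6$, two $I_3$, two $I_2$, two $I_1$), so $\rho(\Ys)\geq 2 + (5+2+1+0)\cdot 2/\ldots$; one gets $\rho(\Ys)=18$ generically with $\MW$ finite, and from the $I_6^2 I_3^2 I_2^2$ configuration with $\MW\cong\ZZ/6\ZZ$ pulled back from $\Xs$ one computes a discriminant that is \emph{not} $-324$. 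Since Lemma~\ref{lem:324} shows every $S\in\Ft$ with $\rho=18$ has K3-cover of discriminant $-324$, this rules out $S\in\Ft$ for generic members, and irreducibility of $\Ff$ (Lemma~\ref{lem:Ft}'s analogue) plus semicontinuity extends it to all of the family.

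\textbf{Main obstacle.} The delicate point is the first approach's claim that the $2$-torsion section of $\Xs$ genuinely descends to a $2$-torsion section of the jacobian of the \emph{reflected} fibration $|2H|$ on $S$, rather than merely a non-torsion or higher-order section --- the Picard--Lefschetz reflections and the choice of half-pencil can in principle move sections around. I expect to handle this by noting that the translation-by-$P$ part of the Enriques involution $\psi$ is precisely what produces the bisection $R$, and that $R$ meets all but one singular fiber in a single component (as in the proof of Proposition~\ref{prop}), which pins down the jacobian to type $IV^*\oplus I_3\oplus I_1$ by direct inspection of \eqref{eq:configs}. Concretely I would simply track $R$ through Figure~\ref{Fig:1}: $R$ is a $2$-section meeting the $I_6$, the $I_3$ and the $I_2$ fibers each in one of the marked A$_2$ components and meeting the $I_1$ fiber transversally, which after contracting the appropriate components and forming the jacobian yields $IV^*$ from the $I_6$ and leaves $I_3$ and $I_1$ untouched --- i.e. the $\Ff$-configuration, so $S\in\Ff$.
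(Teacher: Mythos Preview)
Your discriminant approach (the ``alternative cleaner step'') is correct for generic $S$ and is precisely what the paper alludes to in the sentence preceding the lemma (``For discriminant reasons\dots the second alternative should hold''). The generic $\Ys$ has $\rho=18$, trivial lattice $U+A_5^2+A_2^2+A_1^2$ and $\MW\cong\ZZ/6\ZZ$, hence $d(\NS(\Ys))=-6^2\cdot 3^2\cdot 2^2/6^2=-36\neq -324$, and Lemma~\ref{lem:324} excludes $\Ft$. So that half of your proposal would go through.

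Your first approach, however, contains a real error. You claim that a surviving $2$-torsion section on the jacobian of the reflected fibration singles out $IV^*\oplus I_3\oplus I_1$ over $I_3^4$. But $\Xf$ has $\MW\cong\ZZ/3\ZZ$: it carries no $2$-torsion either, so this criterion cannot separate $\Xf$ from $\Xt$. The final paragraph compounds the confusion: forming the jacobian of $\pi_0$ just returns $\Xs$ with its $I_6,I_3,I_2,I_1$ fibers; one does not obtain a $IV^*$ fiber by ``contracting components'' of a jacobian. (You also misread Figure~\ref{Fig:1}: $R$ is not a bisection meeting the marked $A_2$'s --- it is one of the two curves \emph{forming} the fourth $A_2$, together with an $I_2$-component.)

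The paper's own argument is much shorter and purely geometric: it directly \emph{builds} a $IV^*$ divisor out of the curves in Figure~\ref{Fig:1}. Take one of the two unboxed components of the $I_6$ hexagon as central node; its three neighbours in the figure are the two adjacent $I_6$-components and the bisection $R$, each of which extends to a length-two arm (the two boxed $A_2$'s inside $I_6$, respectively $R$ plus an $I_2$-component). This is a $\tilde E_6$ configuration, and the remaining $A_2$ in the $I_3$ fiber is orthogonal to it. The linear system of this divisor then induces an elliptic fibration on $S$ with a $IV^*$ fiber and a disjoint $A_2$, i.e.\ one of the configurations \eqref{eq-nppss}, so $S\in\Ff$ by definition --- no reflections, no discriminants, no genericity hypothesis.
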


\begin{proof}
It suffices to identify a divisor of Kodaira type $IV^*$ on $S$ with orthogonal $A_2$.
Then its linear system will induce an elliptic fibration 
\[
\pi: S\to\PP^1
\] 
with singular fibres of types $IV^*$ and $I_3$ or $IV$;
thus $S\in\Ff$. 
This is easily achieved: simply connect the three A$_2$'s on the left in Figure \ref{Fig:1} through one of the remaining components
of the original $I_6$ fibre.
\end{proof}

\begin{rem}
\label{rem:bi}
A bisection for the fibration $\pi$ from the proof of Lemma \ref{lem:431} 
can be given without difficulty
(although Figure \ref{Fig:1} only displays fibre components and 4-sections): 
take a half-pencil $B$ of the fibration $\pi_0$.
Out of the curves depicted in Figure \ref{Fig:1},
$B$ only meets the bisection $R$ with multiplicity one.
On the fibration $\pi$,
$B$ meets the $IV^*$ fibre only in the double component $R$;
since additive fibers cannot be multiple,
$B$ thus defines a bisection for $\pi$.
The fiber of type $I_3$ or $IV$ is met by $B$ only in the component not displayed 
in Figure \ref{Fig:1}.
\end{rem}

\subsection{K3-cover for $\Ff$}

Overall, there are 6 configurations
how a bisection may intersect the two reducible fibers of a given Enriques surface $\WW\in\Ff$.
For 3 of them, including the one sketched in Remark \ref{rem:bi},
we can conversely derive the configuration of rational curves on $\Ss$ originating from the
Enriques involution of base change type.
Here we detail on one example:

\begin{examp}
\label{exx}
For the configuration from Remark \ref{rem:bi}
comprising a bisection $B$ (of arithmetic genus zero) 
meeting the fibres of type $IV^*$ and $I_3/IV$ on $\WW$,
one finds that $B$ automatically is a half-pencil inducing an elliptic  fibration $\pi_{|2B|}$,
since it is met by some (nodal) curve (a double component $\Theta$ of the $IV^*$ fibre) 
with multiplicity $1$ (so $\Theta$ gives a bisection for $\pi_{|2B|}$).
This fibration has singular fibers accounting for the root lattices $A_5+A_1$ and $A_2$
obtained from the extended Dynkin diagrams $\tilde E_6, \tilde A_2$ by omitting the curves meeting $B$.
With the nodal bisection $\Theta$, we necessarily end up on a  quotient of $\Xs$
by an Enriques involution of base change type.
Indeed, otherwise, there would be an additive fiber  of type $IV^*, III^*$ or $II^*$;
but here it is easy to see that the root of the $A_5$-diagram met by $\Theta$ 
would correspond to a triple fiber component which, of course, cannot be met by a bisection.
\end{examp}

For the other three possible configurations of bisection and singular fibers,
the approach from Example \ref{exx} does not seem to work.
However, the next proposition and its corollary show
in a lattice-theoretic, hence non-explicit way that also these Enriques surfaces are 
 covered by $\Ys$.
We expect that they 
arise from $\Ys$
by another kind of Enriques involution.

\begin{prop}
\label{prop:NS}
Let $\WW\in\Ff$ such that the K3 cover $\XXX$ has $\rho(\XXX)=18$.
Then 
\[
\NS(\XXX) \cong U(2) + A_2 + E_6 + E_8.
\]
\end{prop}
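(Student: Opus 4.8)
The plan is to compute $\NS(\XXX)$ by playing off two different elliptic fibrations on the K3 cover against each other, exactly as in the proof of Lemma~\ref{lem:324}. First I would record the numerical input: since $\WW\in\Ff$, the induced fibration $\tilde\varphi$ on $\XXX$ (coming from the $IV^*,I_3$-type fibration on $\WW$) has two $IV^*$ fibres and two $I_3$ fibres, so by Shioda--Tate $\rho(\XXX)\ge 4+6+6+2=18$; under the hypothesis $\rho(\XXX)=18$ the fibration is extremal. Passing to the Jacobian $\Jac(\XXX)$, which shares the same fibres and Picard number, finiteness of the Mordell--Weil group together with the available torsion (pulled back from the $3$-torsion section on $\Xf$, giving $\MW\cong\ZZ/3\ZZ$) forces $\operatorname{d}(\NS(\Jac(\XXX)))=-\operatorname{d}(E_6^2+A_2^2)/9 = -3^2\cdot 3^4 / 3^2 = -3^4$, i.e.\ $\operatorname{d}(\NS(\Jac(\XXX)))=-81$. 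By \cite[Lemma~2.1]{keum} applied to the bisection on $\XXX$ (which exists since $\WW$ has an elliptic fibration and hence a bisection), either $\operatorname{d}(\NS(\XXX))=-81$ or $\operatorname{d}(\NS(\XXX))=-324$; the first happens only if $\tilde\varphi$ has a section, which it does not generically (the bisection of square $0$ case, cf.\ \ref{ss:ell}), so $\operatorname{d}(\NS(\XXX))=-324$.

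Next I would pin down $\NS(\XXX)$ as an abstract lattice. We know it is an even hyperbolic lattice of rank $18$, signature $(1,17)$, with discriminant $-324=-2^2\cdot 3^4$, and it receives the primitive embedding $U(2)+E_8(2)\hookrightarrow\NS(\XXX)$ coming from the Enriques quotient. The candidate $U(2)+A_2+E_6+E_8$ has rank $2+2+6+8=18$, is even hyperbolic, and has discriminant $\operatorname{d}=(-4)\cdot(-3)\cdot(-3)\cdot 1=-36$ --- wait, that is $-36$, not $-324$; so in fact the claim must be read with the correct discriminant bookkeeping, and indeed $\operatorname{d}(U(2)+A_2+E_6+E_8)=4\cdot 3\cdot 3=36$ in absolute value, which is $-36$. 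Hence I should instead check that $-324$ is consistent: $\operatorname{d}(U(2))\cdot\operatorname{d}(A_2)\cdot\operatorname{d}(E_6)=4\cdot3\cdot3=36$, so the stated lattice has discriminant $\pm 36$, and the resolution is that the overlattice relation with $\operatorname{d}(\NS(\XXX))=-324$ is mediated by an index consideration on the $\Jac$ side --- concretely, one shows $\NS(\Jac(\XXX))\cong U+A_2+E_6+E_8$ (discriminant $-36\cdot 9 / 9$? no: $\operatorname{d}=1\cdot 3\cdot 3=9$, giving $-9$) and then $\NS(\XXX)$ is obtained by an explicit index-$3$ sublattice computation matching $-81\cdot 4 = -324$. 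The cleanest route is therefore: identify $\NS(\Jac(\XXX))$ first using Shioda's theory of Mordell--Weil lattices (the trivial lattice is $U+E_6+E_6$, plus the torsion contribution), then transfer to $\NS(\XXX)$ via the bisection/\cite{keum} machinery, and finally invoke Nikulin's uniqueness of even lattices of a given signature and discriminant form \cite{nikulin} to conclude $\NS(\XXX)\cong U(2)+A_2+E_6+E_8$ once the discriminant \emph{form} has been matched.

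More precisely, the key steps in order would be: (1) establish extremality of $\tilde\varphi$ and compute $\operatorname{d}(\NS(\Jac(\XXX)))$ from the fibre configuration $2IV^*+2I_3$ with $\MW$-torsion $\ZZ/3\ZZ$; (2) use \cite[Lemma~2.1]{keum} plus the generic non-existence of a section on $\tilde\varphi$ to get $\operatorname{d}(\NS(\XXX))=-324$; (3) read off the discriminant \emph{quadratic form} $q_{\NS(\XXX)}$ --- it must be (anti-)isometric to $q$ of the transcendental lattice $T(\XXX)$, which has rank $2$ and determinant $324$, and since $\XXX$ carries an Enriques involution one has $U(2)+E_8(2)\hookrightarrow\NS(\XXX)$ primitively, forcing the $2$-part of $q_{\NS(\XXX)}$ to be $u_1(2)$ or $v_1(2)$ and the $3$-part to be determined by the $A_2$, $E_6$, and $U$-overlattice data; (4) verify that $U(2)+A_2+E_6+E_8$ has exactly this signature and discriminant form; (5) apply Nikulin's theorem \cite[Cor.~1.13.3]{nikulin} on uniqueness of even lattices in a genus, using that the rank ($=18$) is large enough relative to the length of the discriminant group, to conclude the two lattices are isometric.

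The main obstacle, as I see it, is step (3)--(4): carefully computing the discriminant quadratic form of $U(2)+A_2+E_6+E_8$ and of $\NS(\XXX)$ and checking they agree, rather than just matching determinants --- the factor-of-$9$ discrepancies above show that determinant alone does not settle the isometry type, and one genuinely needs the $2$-adic and $3$-adic pieces of the form, where the $U(2)$ summand and the interplay with the $E_8(2)$-polarisation are delicate. A secondary subtlety is justifying that $\rho(\XXX)=18$ really does force the fibration to be without section generically (so that the $-324$ branch of \cite{keum} is the right one): this should follow from the discussion in \ref{ss:ell} that a bisection of square $-2$ is non-generic on $\Ff$, but it may require checking that the surfaces at hand are not in the special locus. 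I would structure the writeup so that the lattice-theoretic uniqueness (Nikulin) does the heavy lifting at the end, with steps (1)--(2) reused almost verbatim from the proof of Lemma~\ref{lem:324}.
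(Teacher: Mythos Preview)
Your proposal contains a genuine error that derails the argument: you are confusing the family $\Ff$ (the one in the statement) with $\Ft$. The discriminant $-324$ comes from Lemma~\ref{lem:324}, which concerns $\Ft$; for $\Ff$ the answer is $-36$, exactly matching the target lattice $U(2)+A_2+E_6+E_8$. Concretely, the trivial lattice of $\Jac(\XXX)$ with fibres $2\,IV^*+2\,I_3$ is $U+E_6^2+A_2^2$ of discriminant $-3^4=-81$, and the pull-back of the $3$-torsion from $\Xf$ gives $\MW(\Jac(\XXX))\cong\ZZ/3\ZZ$ (not $(\ZZ/3\ZZ)^2$), so $\operatorname{d}(\NS(\Jac(\XXX)))=-81/9=-9$. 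Then \cite[Lemma~2.1]{keum} gives $\operatorname{d}(\NS(\XXX))\in\{-9,-36\}$, and Proposition~\ref{obs-PicK3cover} forces $-36$. Your attempts to reconcile $-324$ with $-36$ by ad hoc index-$3$ passages are therefore chasing a phantom. Once the arithmetic is fixed, your strategy (compute the discriminant form, then invoke \cite[Cor.~1.13.3]{nikulin}) is in principle viable, but you correctly flag that pinning down the discriminant \emph{form} of $\NS(\XXX)$ (not just its order) is the real work, and your sketch does not do it.

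The paper's proof bypasses this difficulty by a direct geometric construction rather than a Jacobian/discriminant bookkeeping argument. One takes the bisection $R$ with $R^2=0$ and modifies it by fibre components to a divisor $D$ orthogonal to the two $E_6$'s and two $A_2$'s sitting in the fibres; the key observation is that the Enriques involution forces the fibres (and hence the correction terms) to come in identical pairs, so $D^2\equiv R^2\equiv 0\pmod 4$, and $D$ together with the fibre class spans $U(2)$. This yields a finite-index sublattice $U(2)+A_2^2+E_6^2\subset\NS(\XXX)$. The known $3$-divisible class on $\WW$ (Lemma~\ref{lem-f431}) then specifies an index-$3$ overlattice $M$ of $A_2^2+E_6^2$, and one checks that $M$ and $A_2+E_6$ have the same discriminant form, so $U(2)+M\cong U(2)+A_2+E_6+E_8$ by Nikulin. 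The advantage is that the discriminant form is read off from an explicit lattice rather than inferred indirectly.
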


\begin{proof}
The elliptic fibration on
$\XXX$ induced from $\WW$ comes automatically with a bisection $R$.
Since $\rho(\XXX)=18$, we can assume that $R^2=0$.
The key step in proving the proposition is the observation
that we can modify $R$ to a divisor $D$ by adding fiber components as correction terms
such that $D$ is perpendicular to 2 A$_2$ and 2 E$_6$ configurations on $\XXX$
(in the fibers of $\pi$).
For fibers of type $I_3, IV$, this has been exhibited in the proof of Lemma \ref{lem-f3333bigcode}.
For $IV^*$ fibers, it is a similar exercise.
For instance, if $R$ meets a double component, then simply subtract from $R$ 
the adjacent simple component $\Theta'$
(which is thus met by $R-\Theta'$ with multiplicity 2).

Crucially, we now use that the singular fibers come in pairs 
which are met by $R$ in exactly the same way
(there cannot be non-reduced singular fibers since $\rho(\XXX)=18$).
In consequence, the correction terms for $D$ also come in pairs,
so
\[
D^2 \equiv R^2 \equiv 0\mod 4.
\]
Hence $D$ and the general fiber $F$ span the lattice $U(2)$,
and we obtain a finite index sublattice
\[
U(2) + A_2^2 + E_6^2 \subset \NS(\XXX).
\]
To compute $\NS(\XXX)$,
it remains to take the 3-divisible class in $A_2^2 + E_6^2$
induced from the Enriques surface into account.
From the lattice viewpoint, this behaves exactly like the 3-torsion section on $\Jac(\XXX)$;
we obtain an integral index 3 overlattice $M$ of $A_2^2 + E_6^2$
by adjoining a vector of square $-4$ obtained by adding up minimal vectors twice each 
of $A_2^\vee$
and $E_6^\vee$ (of square $-2/3$ resp. $-4/3$).
Finally we verify that $M$ and $A_2+E_6$ have isomorphic discriminant forms.
By \cite[Cor. 1.13.3]{nikulin}, it follows that $U(2)+M$
and $U(2)+A_2+E_6+E_8$ are isometric.
\end{proof}

\begin{cor}
A general Enriques surface $S\in\Ff$ is covered by a K3 surface $\Ys$.
\end{cor}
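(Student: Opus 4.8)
The plan is to deduce the corollary from Proposition \ref{prop:NS} by recognising the lattice $U(2)+A_2+E_6+E_8$ as the N\'eron--Severi lattice of a K3 surface that arises from $\Xs$ by a quadratic base change of the type used in \S\ref{ss:6321}. First I would recall the set-up: the extremal rational elliptic surface $\Xs$ has singular fibres $I_6+I_3+I_2+I_1$ and $\MW(\Xs)\cong\ZZ/6\ZZ$, and a quadratic base change \eqref{eq:base-change} ramified away from the $I_3$ and $I_1$ fibres produces the K3 surface $\Ys$ together with the Enriques involution $\psi$. For a \emph{general} such base change, all four singular fibres of $\Xs$ get duplicated, so $\Ys$ has fibre configuration $2I_6+2I_3+2I_2+2I_1$ (or, more precisely, the generic member has $I_6+I_6+\dots$ with two of the eight reducible fibres possibly merging only on a proper subfamily). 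Hence $\rho(\Ys)=18$ generically, and I would compute $\NS(\Ys)$ directly from the Shioda--Tate formula together with $\MW(\Jac(\Ys))$, keeping track of the $2:1$ character of the base change, which scales $U$ to $U(2)$ on the transcendental side and is responsible for the $U(2)$ summand appearing on $\NS$.

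Next I would verify that this $\NS(\Ys)$ is isometric to $U(2)+A_2+E_6+E_8$. The cleanest route is the one already used in the proof of Proposition \ref{prop:NS}: the trivial lattice of the induced fibration on $\Ys$ (built from an $I_6$, an $I_3$, and the hyperbolic plane twisted appropriately) together with the torsion part of the Mordell--Weil group of the jacobian gives a finite-index sublattice of $\NS(\Ys)$, and one identifies the overlattice by matching discriminant forms and invoking \cite[Cor.~1.13.3]{nikulin}. In fact it is enough to observe that $\Ys$ is itself a K3 cover of an Enriques surface $\Ss\in\Ff$ (this is exactly the content of Lemma \ref{lem:431} and the discussion around Figure \ref{Fig:1}), so $\rho(\Ys)=18$ forces $\NS(\Ys)\cong U(2)+A_2+E_6+E_8$ by Proposition \ref{prop:NS} applied to $\Ss$. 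Thus the distinguished K3 surface $\Ys$ produced from $\Xs$ in \S\ref{ss:6321} has precisely the N\'eron--Severi lattice predicted by Proposition \ref{prop:NS}.

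It remains to compare a general $\WW\in\Ff$ with its K3 cover. Since $\Ff$ is irreducible (it is the moduli family $\Ff$ introduced in \S\ref{ss:2nd}) and is two-dimensional, while the family of K3 surfaces with a fixed lattice polarisation of rank $18$ is also two-dimensional with the transcendental lattice of rank $2$ varying, a general $\WW\in\Ff$ has K3 cover $\XXX$ with $\rho(\XXX)=18$, and by Proposition \ref{prop:NS} its $\NS(\XXX)$ is isometric to $U(2)+A_2+E_6+E_8$. The same lattice is realised, with the same Enriques involution up to deformation, on the two-dimensional subfamily $\{\Ys\}$ coming from quadratic base changes of $\Xs$. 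Since the period domain of K3 surfaces with this transcendental lattice of rank $2$ and this N\'eron--Severi lattice is irreducible of dimension $2$, and the Enriques quotient is determined by the choice of a fixed-point-free involution acting on $\NS$ as the one exhibited for $\Ys$, I conclude that a general member of $\Ff$ is, up to isomorphism, one of the $\Ys$, i.e. is covered by a K3 surface of the form $\Ys$.

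The main obstacle I anticipate is making the final identification rigorous rather than merely a dimension count: one needs to know that the family of Enriques surfaces $\Ss$ obtained from base changes of $\Xs$ sweeps out a \emph{full} two-dimensional subfamily of $\Ff$, not a proper subvariety, and that the K3 cover determines the Enriques surface within $\Ff$ once $\rho=18$. The irreducibility of $\Ff$ (established in \S\ref{ss:2nd}) together with Proposition \ref{prop:NS} handles the lattice-theoretic side; the remaining subtlety is that, a priori, the period map could send $\{\Ss\}$ onto a lower-dimensional locus if the base changes were not varying the periods maximally. This is settled by noting that the two ramification points of \eqref{eq:base-change} on $\Xs$ vary in a two-dimensional family (equivalently, $T(\Ys)$ runs through a two-parameter family of rank-$2$ lattices as in \eqref{eq:T(3)}-type scaling arguments), matching $\dim\Ff=2$, so the two families coincide generically.
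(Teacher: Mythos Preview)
Your overall strategy---identify $\NS(\Ys)$ with the lattice in Proposition~\ref{prop:NS} via Lemma~\ref{lem:431}, then match dimensions---is exactly the paper's. But there is one genuine gap and one slip.

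The slip: for a K3 surface with $\rho=18$ the transcendental lattice has rank $22-18=4$, not $2$. This does not affect the dimension count (the moduli space of $L$-polarised K3 surfaces still has dimension $20-\rho=2$), but you state it twice, so it should be corrected.

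The gap is the step where you assert that ``the period domain of K3 surfaces with this \ldots\ N\'eron--Severi lattice is irreducible of dimension $2$''. Irreducibility here is precisely the point that needs proof, and you never justify it. The paper closes this by invoking \cite[Thm.~1.14.4]{nikulin}: the lattice $U(2)+A_2+E_6+E_8$ admits a \emph{unique} primitive embedding into the K3 lattice $U^3+E_8^2$ up to isometries, and it is this uniqueness that forces the moduli space of K3 surfaces with this N\'eron--Severi lattice to be irreducible. Without it, your dimension count in the final paragraph does not conclude: both the family $\{\Ys\}$ and the family of K3 covers of $\Ff$ are two-dimensional, but a priori they could lie on different irreducible components of the lattice-polarised moduli space, and no amount of parameter counting rules that out.

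Finally, the corollary only claims that the K3 \emph{cover} of a general $S\in\Ff$ is isomorphic to some $\Ys$; it says nothing about the Enriques involution. Your worry that ``the Enriques quotient is determined by the choice of a fixed-point-free involution'' is therefore unnecessary here (the paper in fact remarks just before Proposition~\ref{prop:NS} that for some bisection configurations a different Enriques involution on $\Ys$ is expected).
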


\begin{proof}
The N\'eron-Severi lattice of the covering K3 surface
admits a unique embedding into the K3 lattice $U^3+E_8^2$
up to isometries by \cite[Thm. 1.14.4]{nikulin}.
Hence
the K3 surfaces with this lattice polarisation form an irreducible two-dimensional family,
and the corollary ends up being a consequence of Lemma \ref{lem:431} in the reverse direction
(since the quadratic base changes of $\Xs$ exactly form a two-dimensional family,
the parameters being the non-ordered pairs comprising the two ramification points).
\end{proof}

\begin{rem}
\label{rem:j=0}
We can use the above description to give a proof of Lemma \ref{lem:boundary}.
For this purpose, we check within our family where the singular fiber types degenerate from $(I_3 + I_1)$ to $IV$.
For this, we normalise the base change \eqref{eq:base-change}
to take the shape
\[
t \mapsto 1-\gamma \dfrac{(t-1)(t-\lambda)}t,
\]
so that $\Ys$ has fibers of type $I_6$ at $0, \infty$
and $I_2$ at $1, \lambda$.
Then we extract the elliptic fibration with 2 fibers of types $IV^*$ and 2 perpendicular $A_2$
inducing generically a quadratic base change of $\Xf$.
This turns out to be isotrivial (with zero j-invariant and $IV$ fibers instead of $I_3$ and $I_1$) exactly for\linebreak
 $\lambda=1-3/\gamma$.
\end{rem}

\subsection{Comparison with $\Ys$}
\label{ss:sanity}

As a sanity check,
we will compute $\NS(\Ys)$ at a very general moduli point directly.
Incidentally, this will allow us to draw interesting consequences,
see Theorem \ref{thm:Hodge}.

Consider a K3 surface $\Ys$ with $\rho(\Ys)=18$.
By \cite[(22)]{shioda-schuett}, $\NS(\Ys)$ has discriminant $-36$.
In order to compute $\NS(\Ys)$ directly,
we will identify  two perpendicular divisors $D_1, D_2$ of Kodaira type $II^*$ among the plentitude of $(-2)$-curves
visible in the elliptic fibration $\pi_0$ as fiber components and torsion sections.
To define $D_1$,
connect the zero section $O$ in three directions:
by a component of either $I_2$ fiber, two components of an $I_3$ and a chain $\Theta_0,\hdots\Theta_4$ 
of five components of an $I_6$.
Similarly, the divisor $D_2$ comprises the $6$-torsion section disjoint from $D_1$ 
(i.e. meeting the remaining fibre component $\Theta_5$ of the chosen $I_6$ fiber)
and fibre components of the other $I_2, I_3$ and $I_6$ fibres.

This approach has several advantages.
First it reveals that $\Ys$ admits an elliptic fibration $\pi_{|D_1|}$ with two fibers of type $II^*$.
This comes with  multisections of degree $6$, given for instance by  $\Theta_5$. 
In consequence, the Jacobian has 
\[
\NS(\Jac(\Ys,\pi_{|D_1|})) \cong U+E_8^2.
\]
With this N\'eron-Severi lattice, $\Jac(\Ys,\pi_{|D_1|})$ is sandwiched by the Kummer surface of two elliptic curves
by \cite{Sandwich}.
All of this occurs in the framework of Shioda-Inose structures and shows the following result:

\begin{thm}
\label{thm:Hodge}
There are elliptic curves $E, E'$ such that as transcendental $\QQ$-Hodge structures
\[
T(\Ys) \cong T(E\times E').
\]
\end{thm}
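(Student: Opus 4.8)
The plan is to leverage the elliptic fibration $\pi_{|D_1|}$ on $\Ys$ with two fibres of Kodaira type $II^*$ that was just exhibited, together with the classical theory of Shioda--Inose structures. First I would observe that since $\NS(\Jac(\Ys,\pi_{|D_1|}))\cong U+E_8^2$, the jacobian K3 surface carries a Shioda--Inose structure: the two $II^*$ fibres force a Nikulin involution whose quotient (after resolution) is the Kummer surface of a product abelian surface, and by \cite{Sandwich} this places $\Jac(\Ys,\pi_{|D_1|})$ between $\mbox{Kum}(E\times E')$ and the abelian surface $E\times E'$ for suitable elliptic curves $E,E'$. The rational double covers and quotients in a Shioda--Inose sandwich are isomorphisms on transcendental lattices \emph{rationally} (they multiply the intersection form by powers of $2$), hence induce isomorphisms of transcendental $\QQ$--Hodge structures. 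This yields
\[
T(\Jac(\Ys,\pi_{|D_1|}))\cong T(E\times E')
\]
as $\QQ$--Hodge structures.

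Next I would bridge from $\Jac(\Ys,\pi_{|D_1|})$ back to $\Ys$ itself. As recalled in \ref{ss:ell}, an elliptic fibration and its jacobian share the same configuration of singular fibres and the same Picard number; more to the point, they have isomorphic transcendental lattices over $\QQ$ (the difference between $\NS(\Ys)$ and $\NS(\Jac(\Ys,\pi_{|D_1|}))$ is supported on the finite index coming from multisection degrees, here degree $6$, which is invisible rationally). Concretely, $T(\Ys)$ and $T(\Jac(\Ys,\pi_{|D_1|}))$ have the same rank $4$ and agree after tensoring with $\QQ$, so they define the same transcendental $\QQ$--Hodge structure. Composing the two rational Hodge isometries gives $T(\Ys)\cong T(E\times E')$ as claimed.

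The main obstacle I expect is making precise the passage to the jacobian: one needs that for a genus-one fibration on a K3 surface the transcendental lattices of the total space and of the jacobian fibration are rationally Hodge-isometric. This is folklore (it follows, e.g., from the fact that the two surfaces are derived-equivalent, or from an explicit comparison of $\NS$ using the multisection index), but it should be stated carefully — in particular one should note that the relevant index is a power of the multisection degree, here only involving $2$ and $3$, so the induced map on $T\otimes\QQ$ is an isomorphism. Once that is granted, the rest is just assembling known results: the existence of the $II^*\oplus II^*$ fibration (already established above), the identification of its jacobian's N\'eron--Severi lattice as $U+E_8^2$, the Shioda--Inose sandwich of \cite{Sandwich}, and the rational Hodge-invariance of the sandwich maps.

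One should also remark that the elliptic curves $E,E'$ are not unique — only their product up to isogeny is pinned down by $T(\Ys)$ — and that at a very general moduli point of $\Ff$ the lattice $T(\Ys)$ has rank $4$ and discriminant $-36$, consistent with $T(E\times E')$ for $E,E'$ varying in a two-dimensional family, which matches the dimension count for $\Ff$ from Section \ref{s-2fam}.
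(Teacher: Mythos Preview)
Your proposal is correct and follows essentially the same route as the paper: pass to the Jacobian of the $II^*\oplus II^*$ fibration, use \cite{Sandwich} to get the Shioda--Inose sandwich with $\mbox{Kum}(E\times E')$, and conclude. The paper compresses all of this into the single sentence ``All of this occurs in the framework of Shioda--Inose structures and shows the following result'', whereas you spell out the bridge from $\Jac(\Ys,\pi_{|D_1|})$ back to $\Ys$; this extra care is appropriate, though two small points deserve tightening: a Shioda--Inose structure actually yields an \emph{integral} Hodge isometry $T(\Jac(\Ys,\pi_{|D_1|}))\cong T(E\times E')$ (not merely rational), and any finite-index inclusion of lattices becomes an isomorphism after $\otimes\,\QQ$ regardless of which primes divide the index, so the remark about ``only involving $2$ and $3$'' is unnecessary.
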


\begin{rem}
Theorem \ref{thm:Hodge} provides a conceptual way to exhibit explicit K3 surfaces $\Ys$ with $\rho(\Ys)=18$,
parallelling \cite[\S 4.7]{GS}.
Using the involution of base change type from \ref{ss:6321},
we obtain explicit very general members of the family $\Ff$,
as opposed to the extraordinary Example \ref{example-kondo}. 
\end{rem}

As a second application, we return to the computation of $\NS(\Ys)$.
Consider the orthogonal projection inside $\NS(\Ys)$ with respect to the sublattice $E_8^2$ 
specified above.
The multisection $\Theta_5$ is taken to a divisor $D$ of square $D'^2=-60$
orthogonal to $E_8^2$.
It follows that $D$ and a fiber of $\pi_{|D_1|}$ generate the lattice $U(6)$.
Thus we obtain 
\begin{eqnarray}
\label{eq:U(6)}
\NS(\Ys) \cong U(6) + E_8^2.
\end{eqnarray}
(Here, a priori we only have checked the inclusion '$\supseteq$',
but equality holds since the discriminants match.) 
One easily checks that the discriminant forms of the N\'eron-Severi lattices in Proposition \ref{prop:NS} and in
\eqref{eq:U(6)} agree.
By \cite[Cor. 1.13.3]{nikulin}, this suffices to prove that the lattices are isometric as required.

\begin{prop}
\label{prop:T}
If $\Ys$ has $\rho(\Ys)=18$, then $T(\Ys)\cong U+U(6)$.
\end{prop}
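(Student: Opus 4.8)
The plan is to determine $T(\Ys)$ from the knowledge of $\NS(\Ys)$ obtained in \eqref{eq:U(6)} together with the fact that $\Ys$ is a K3 surface with $\rho(\Ys)=18$, so that $T(\Ys)$ is an even lattice of signature $(2,4)$ and $\mathrm{d}(T(\Ys)) = -\mathrm{d}(\NS(\Ys))$ up to sign, with discriminant form $q_{T(\Ys)} \cong -q_{\NS(\Ys)}$ by the unimodularity of $H^2(\Ys,\ZZ) \cong U^3 + E_8^2$. First I would record that $\NS(\Ys) \cong U(6) + E_8^2$ has discriminant group $A_{U(6)} \cong (\ZZ/6\ZZ)^2$ with discriminant form $u(6)$ (the hyperbolic plane scaled by $6$), since $E_8^2$ is unimodular. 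Hence $T(\Ys)$ must be an even lattice of signature $(2,4)$, determinant $\pm 36$, whose discriminant form is $-u(6) \cong u(6)$ (the form $u(6)$ is its own negative, as one checks directly on the $\ZZ/6\ZZ$-summands, or by splitting $6 = 2\cdot 3$ and treating the $2$- and $3$-parts separately).

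Next I would exhibit $U + U(6)$ as such a lattice: it is even, of signature $(1,1)+(1,1) = (2,2)$ — wait, that has the wrong signature, so I must instead take the correct ambient form. Since $T(\Ys)$ has rank $22 - 18 = 4$ and signature $(2, 2)$... but a K3 transcendental lattice of a surface with $\rho = 18$ has rank $4$ and signature $(2, 20 - 18) = (2, 2)$. So $U + U(6)$ — of signature $(1,1) + (1,1) = (2,2)$ and rank $4$ — is a candidate. Its discriminant group is $A_{U(6)} \cong (\ZZ/6\ZZ)^2$ with form $u(6)$, matching $q_{T(\Ys)}$. By Nikulin's uniqueness theorem \cite[Cor. 1.13.3]{nikulin} (as already invoked twice in this section), an even lattice of given signature and discriminant form is unique in its genus provided the standard rank/length inequalities hold; here $\mathrm{rk} = 4$ and $\ell(A) = 2$, and one checks the hypotheses of Nikulin's criterion are satisfied, so the genus contains a single class. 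Therefore $T(\Ys) \cong U + U(6)$.

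The main obstacle will be verifying the discriminant-form computation cleanly — namely that $q_{\NS(\Ys)} = u(6)$ on $(\ZZ/6\ZZ)^2$ and that $-u(6) \cong u(6)$, and that $U + U(6)$ genuinely realizes this form — together with confirming that Nikulin's uniqueness hypotheses hold for signature $(2,2)$ and length $2$ (the inequality $\mathrm{rk}(T) \geq \ell(A_T) + 2$ is satisfied with equality, which is the borderline case, so one must be slightly careful and may prefer to argue $2$-adically and $3$-adically and invoke \cite[Thm. 1.14.4]{nikulin} or the analogous uniqueness statement for indefinite lattices). An alternative, more geometric route that sidesteps the borderline case: use Theorem \ref{thm:Hodge}, which gives $T(\Ys) \cong T(E \times E')$ as $\QQ$-Hodge structures and hence $T(\Ys)$ is, up to finite index, a scaled product $U + U(n)$; combined with the explicit Shioda--Inose picture from \ref{ss:sanity} one can pin down $n = 6$ from the discriminant. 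I would present the lattice-theoretic argument as the main line and remark on the Hodge-theoretic cross-check.
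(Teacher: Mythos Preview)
Your approach is correct and essentially identical to the paper's own proof, which simply says the result follows from \eqref{eq:U(6)} together with \cite[Prop.~1.6.1 \& Cor.~1.13.3]{nikulin}. Your worry about the borderline case $\operatorname{rk}(T)=\ell(A_T)+2$ is unnecessary: Nikulin's Corollary~1.13.3 allows equality, and the paper invokes the same result in the parallel Proposition~\ref{prop:T'} where the identical borderline situation (rank~$4$, length~$2$) arises; the alternative Hodge-theoretic route via Theorem~\ref{thm:Hodge} is therefore not needed.
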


\begin{proof}
This follows directly from \eqref{eq:U(6)} using \cite[Prop. 1.6.1 \& Cor. 1.13.3]{nikulin}. 
\end{proof}

\subsection{K3-cover for $\Ft$}
\label{ss:K3-3}

We can carry out similar calculations for the K3 cover $\XXX'$
of an Enriques surface $\WW'\in\Ft$.
Here we only sketch the results.

Generically, $\XXX'$ comes equipped with an elliptic fibration with 8 fibers of type $I_3$
and an irreducible bisection $R'$ such that $R'^2=0$.
Thus the argumentation from the proof of  Lemma \ref{lem-f3333bigcode} applies to modify $R'$ to a divisor $D$ perpendicular to 8 disjoint A$_2$ configurations (supported on the fibres).
Generically, we obtain the finite index sublattice
\[
U(2) + A_2^8\hookrightarrow \NS(\XXX')
\]
which leads to the following analogue of Proposition \ref{prop:NS}

\begin{prop}
\label{prop:NS'}
Let $\WW'\in\Ft$ such that the K3 cover $\XXX'$ has $\rho(\XXX')=18$.
Then 
\[
\NS(\XXX') \cong U(2) + A_2^2 + E_6^2.
\]
\end{prop}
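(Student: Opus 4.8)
The plan is to mirror, almost verbatim, the proof of Proposition \ref{prop:NS}, replacing the pair $IV^*+\{I_3\text{ or }IV\}$ by the four $I_3$ fibres throughout. First I would record the starting data: by Lemma~\ref{lem-f3333smallcode} (and Remark~\ref{rem:I_3}) the K3-cover $\XXX'$ of a surface $\WW'\in\Ft$ carries an elliptic fibration $\tilde\varphi$ with $8$ fibres of type $I_3$, and since $\rho(\XXX')=18$ there are no non-reduced fibres; moreover $\tilde\varphi$ comes with a bisection $R'$ which, as explained in \ref{ss:ell}, may be taken with $R'^2=0$ once $\rho(\XXX')=18$. Then I would invoke the correction-term argument already carried out in the proof of Lemma~\ref{lem-f3333bigcode}: one adds to $R'$ suitable fibre components so as to produce a divisor $D$ perpendicular to $8$ disjoint $A_2$-configurations in the fibres. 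As in Proposition~\ref{prop:NS}, the key point is that the fibres come in pairs interchanged by the Enriques involution and met by $R'$ in the same way, so the correction terms come in pairs and $D^2\equiv R'^2\equiv 0\bmod 4$; hence $D$ together with the general fibre $F$ spans a copy of $U(2)$, orthogonal to $A_2^8$, giving a finite-index sublattice
\[
U(2)+A_2^8\hookrightarrow\NS(\XXX').
\]

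Next I would compute the overlattice coming from the $3$-divisibility relations. By Lemma~\ref{lem-f3333smallcode}(b) (or directly by pulling back the four $3$-divisible sets on $\WW'$ from Lemma~\ref{lem-f3333bigcode}), $\XXX'$ carries $3$-divisible sets of six disjoint $A_2$-configurations among the eight. As in the proof of Proposition~\ref{prop:NS}, each such relation behaves exactly like a $3$-torsion section on $\Jac(\XXX')$, and adjoining the corresponding glue vectors (sums of minimal vectors of $A_2^\vee$, each of square $-2/3$) produces an integral overlattice $M\supset A_2^8$ of the appropriate index; concretely $M$ is the lattice one gets from $A_2^8$ by gluing along the extremal ternary code attached to the $3$-divisible sets, which here — because $\Jac(\XXX')$ is pulled back from $\Jac(\XXX')$'s rational model $\Xt$ with $\MW\cong(\ZZ/3\ZZ)^2$ duplicated, so $\MW(\Jac(\XXX'))\cong(\ZZ/3\ZZ)^2$ in the $\rho=18$ case by the proof of Lemma~\ref{lem:324} — is a code of dimension $4$ over $\FFF_3$. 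I would then identify $M$ with $A_2^2+E_6^2$ by checking that the two lattices have the same rank, signature and discriminant form: indeed $d(A_2^2+E_6^2)=3^2\cdot 3^2=3^4$, which matches $d(A_2^8)/3^{2\cdot 2}=3^8/3^4$, and the discriminant \emph{forms} are seen to agree by a short explicit computation on the glue group $(\ZZ/3\ZZ)^4$. By \cite[Cor. 1.13.3]{nikulin} this forces $M\cong A_2^2+E_6^2$, and hence $U(2)+M\cong U(2)+A_2^2+E_6^2$.

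Finally I would conclude: the above gives $U(2)+A_2^2+E_6^2\hookrightarrow\NS(\XXX')$, and since this lattice already has rank $18$ and its discriminant ($2^2\cdot 3^4$, matching $d(\NS(\XXX'))=-324$ from Lemma~\ref{lem:324}) agrees with that of $\NS(\XXX')$, the embedding is an isometry. I expect the main obstacle to be the bookkeeping in the overlattice step — namely pinning down exactly which index-$3^2$ overlattice of $A_2^8$ arises from the $3$-divisibility and verifying that its discriminant form matches that of $A_2^2+E_6^2$. This is the same computation that underlies Remark~\ref{rem:3-torsion} and the analogous passage in Proposition~\ref{prop:NS}, and the cleanest way to handle it is to transport everything through $\Jac(\XXX')$, where the $3$-torsion Mordell--Weil group is explicit, and then use the fact (as in \ref{ss:ell}) that $\XXX'$ and $\Jac(\XXX')$ share the same singular fibres and the same (positive) Picard number, so that the glue contributions are literally identical. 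Everything else is routine, and the details can be left to the reader, as in Proposition~\ref{prop:NS}.
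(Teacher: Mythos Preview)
Your approach is essentially the one the paper sketches in \S\ref{ss:K3-3} (mirroring Proposition~\ref{prop:NS} with $I_3^8$ in place of $2\,IV^*+2\,I_3$), and the overall line of argument is sound. Two corrections are in order, however.

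First, the ternary code encoding the $3$-divisibility relations on $\XXX'$ has dimension $2$, not $4$: you yourself note $\MW(\Jac(\XXX'))\cong(\ZZ/3\ZZ)^2$, and this yields an index-$3^2$ overlattice $M\supset A_2^8$. Your discriminant count $d(A_2^8)/3^{2\cdot 2}=3^4$ already uses the correct index, so this is a slip rather than a gap.

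Second, and more to the point, \cite[Cor.~1.13.3]{nikulin} does \emph{not} apply to the negative-definite lattice $M$ on its own --- uniqueness within a genus can fail for definite lattices. What the paper does in the proof of Proposition~\ref{prop:NS}, and what you should do here, is check only that $M$ and $A_2^2+E_6^2$ have isomorphic discriminant forms and then apply Nikulin to the \emph{indefinite} lattice $U(2)+M$ (signature $(1,17)$, with the rank-versus-length hypothesis satisfied) to conclude $U(2)+M\cong U(2)+A_2^2+E_6^2$. The intermediate assertion $M\cong A_2^2+E_6^2$ is neither needed nor justified by that citation. With these two fixes your argument goes through and matches the paper's.
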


As before, it follows that the K3 covers of all Enriques surfaces in $\Ft$
form an irreducible two-dimensional family.
Using the discriminant form, we can compute the transcendental lattice of a very general K3 cover:

\begin{prop}
\label{prop:T'}
Let $\WW'\in\Ft$ be an Enriques surfaces such that its K3-cover $\XXX'$ has $\rho(\XXX')=18$.
Then 
\[
T(\XXX')\cong U(3)+U(6).
\]
\end{prop}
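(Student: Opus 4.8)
The plan is to mimic the proof of Proposition~\ref{prop:T} almost verbatim, using the explicit N\'eron--Severi lattice just obtained in Proposition~\ref{prop:NS'}. Since $\XXX'$ has $\rho(\XXX')=18$, the transcendental lattice $T(\XXX')$ has rank $4$, and by \cite[Prop.~1.6.1]{nikulin} it is characterised (within its genus) by its signature $(2,2)$ and its discriminant form, which is the opposite of the discriminant form of $\NS(\XXX')$. So the first step is to compute the discriminant form $q_{\NS(\XXX')}$ from the description $\NS(\XXX')\cong U(2)+A_2^2+E_6^2$ of Proposition~\ref{prop:NS'}: the $U(2)$ summand contributes the form $u_1$ on $(\ZZ/2)^2$, each $A_2$ contributes the standard form $w_{3,1}$ on $\ZZ/3$ (a generator of square $-2/3$), and each $E_6$ contributes a form on $\ZZ/3$ (a generator of square $-4/3\equiv 2/3$). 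Thus $q_{\NS(\XXX')}$ lives on $(\ZZ/2)^2\oplus(\ZZ/3)^4$ and one reads off the $3$-part explicitly.

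Next I would exhibit the candidate lattice $U(3)+U(6)$ and compute \emph{its} discriminant form. Here $U(3)$ has discriminant group $(\ZZ/3)^2$ with the hyperbolic form $u_1$ scaled appropriately, and $U(6)$ has discriminant group $\ZZ/6\times\ZZ/6\cong(\ZZ/2)^2\oplus(\ZZ/3)^2$ with the corresponding hyperbolic form. So $q_{U(3)+U(6)}$ also lives on $(\ZZ/2)^2\oplus(\ZZ/3)^4$, and it remains to check that $q_{U(3)+U(6)}\cong -q_{\NS(\XXX')}$. The $2$-part is immediate (both equal $u_1$, which is self-opposite up to isometry since $u_1\cong -u_1$). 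For the $3$-part one compares the two forms on $(\ZZ/3)^4$: on the $\NS$ side we have two copies of the $A_2$-form and two of the $E_6$-form, and on the $U(3)+U(6)$ side two copies of a hyperbolic $u$-form on $(\ZZ/3)^2$; a short check using the classification of finite quadratic forms on $3$-groups (e.g. the relations $w_{3,1}\oplus w_{3,1}\cong v_1\oplus\cdots$, equivalently comparing the two rank-$4$ forms via their values and Gauss sums) shows they are negatives of one another. This is the step requiring a little care but no real difficulty.

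Having matched signatures and discriminant forms, I would invoke \cite[Cor.~1.13.3]{nikulin} (together with \cite[Prop.~1.6.1]{nikulin}), exactly as in the proof of Proposition~\ref{prop:T}: an even lattice of signature $(2,2)$ with the prescribed discriminant form is unique in its genus, and since $U(3)+U(6)$ realises that genus and the genus contains only one class, we conclude $T(\XXX')\cong U(3)+U(6)$. One should check that the relevant indefinite lattice is indeed unique in its genus (the rank, $4$, is large enough relative to the length of the discriminant group, which is $\le 4$, and $2$ is divided — so Nikulin's criterion applies), but this is the same bookkeeping already used for $T(\Ys)$.

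The main obstacle I anticipate is purely computational: correctly normalising the discriminant forms of $U(2)$, $A_2$, $E_6$, $U(3)$ and $U(6)$ and verifying the isometry of the two rank-$4$ ternary quadratic forms, keeping track of signs (the transcendental lattice carries the \emph{opposite} form). It is conceivable that a direct lattice computation is cleaner: starting from $U(2)+A_2^2+E_6^2\hookrightarrow\NS(\XXX')$ of finite index $3$ (via the $3$-divisible class pulled back from $\WW'$, as in Propositions~\ref{prop:NS} and~\ref{prop:NS'}), one could compute $T(\XXX')$ as the orthogonal complement inside the K3 lattice $U^3+E_8^2$ directly, paralleling the sanity check in \ref{ss:sanity}; but the discriminant-form route is shorter and self-contained given Proposition~\ref{prop:NS'}, so that is the one I would write up.
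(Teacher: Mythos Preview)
Your approach is reasonable but genuinely different from the paper's, and it contains a gap at the uniqueness step that the paper's argument neatly avoids.

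The paper does \emph{not} compute $q_{U(3)+U(6)}$ directly and match it against $-q_{\NS(\XXX')}$. Instead it observes that the $3$-length of $A_{\NS(\XXX')}$ (and hence of $A_{T(\XXX')}$) equals $4 = \operatorname{rank} T(\XXX')$, which forces $T(\XXX')$ to be $3$-divisible as an even lattice: $T(\XXX') = M(3)$ for some even lattice $M$. Then $\det M = 324/3^4 = 4$ (using Lemma~\ref{lem:324}), so $A_M$ is a $2$-group of length $2$, and one only needs to match the $2$-part of the discriminant form: $q_M = -q_{U(2)} = q_{U(2)}$. Now Nikulin's criterion applies cleanly to $M$ (rank $4 \geq \ell(A_M) + 2 = 4$), giving $M \cong U + U(2)$ and hence $T(\XXX') \cong U(3) + U(6)$.

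Your direct route runs into trouble precisely where you wave your hands: the uniqueness-in-genus check. Nikulin's standard criterion (Theorem~1.14.2 / Corollary~1.13.3) requires $\operatorname{rank}(L) \geq \ell_p(A_L) + 2$ for each prime $p$, not merely $\operatorname{rank}(L) \geq \ell_p(A_L)$. Here the $3$-length of $A_{T(\XXX')}$ is $4$, equal to the rank, so at $p=3$ the inequality $4 \geq 6$ fails. This is \emph{not} ``the same bookkeeping already used for $T(\Ys)$'': there $T(\Ys) \cong U + U(6)$ has discriminant group $(\ZZ/6\ZZ)^2$ of length $2$, so $4 \geq 2+2$ goes through. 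To salvage your approach you would need either the refined clause of Nikulin's theorem (checking the extra condition on $(q_T)_3$ when $\ell_3 = \operatorname{rank}$) or an ad hoc argument---but at that point you are essentially forced into the paper's $3$-divisibility observation anyway, since that is exactly what the equality $\ell_3 = \operatorname{rank}$ encodes.

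Your discriminant-form computations themselves look fine; the gap is solely in invoking uniqueness.
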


\begin{proof}
The discriminant group $A_{\NS}$ of $\NS(\XXX')$ has $3$-length $4$ by Proposition \ref{prop:NS'}.
Since this length equals the rank of $T(\XXX')$,
we deduce that $T(\XXX')$ is $3$-divisible as an integral even lattice,
i.e.
\[
T(\XXX') = M(3) \;\;\; \text{ for some even lattice } M.
\]
By Lemma \ref{lem:324}, $M$ has determinant $4$.
From Proposition \ref{prop:NS'}, we infer the equality of discriminant forms
\[
q_M = - q_{U(2)} = q_{U(2)}.
\]
Hence $M\cong U+U(2)$ by \cite[Prop. 1.6.1 \& Cor. 1.13.3]{nikulin} 
\end{proof}

\subsection{Overlap of $\Ff$ and $\Ft$}

Recall from Proposition \ref{prop-ro18}
that the two families of Enriques surfaces $\Ff$ and $\Ft$ only intersect
on one-dimensional subfamilies.
Here we shall give a lattice theoretic characterisation
of two infinite series of subfamilies
and work out the first case explicitly.

In essence, computing the one-dimensional subfamilies of overlap  amounts
to calculating even lattices $T$ of signature $(2,1)$
admitting primitive embeddings into both generic transcendental lattices 
from Propositions \ref{prop:T}, \ref{prop:T'}.
Then one can enhance the N\'eron-Severi lattices by a primitive vector perpendicular to $T$
using the gluing data encoded in the discriminant form (see \cite[\S 3]{GS}, e.g.).
There are two obvious kinds of candidates for $T$ with $N\in\ZZ_{>0}$:

\begin{eqnarray}
\label{eq:12}
U(3) + \langle 12N\rangle & \hookrightarrow & \begin{cases}
U(3) + U(2) \cong U + U(6) \\
U(3) + U(6)
\end{cases}\\
U(6) + \langle 6N\rangle & \hookrightarrow & \begin{cases}
 U(6) + U\\
U(6) + U(3)
\end{cases}
\end{eqnarray}

\begin{rem}
\label{rem:jac}
We point out that
\eqref{eq:12}
includes families where the Jacobians of the K3 covers $\Yt$ and $\Yf$ overlap.
In fact, this happens with transcendental lattices $U(3) + \langle 6M\rangle$, 
and one can show as in \cite[Prop. 4.2]{hm1}
that a K3 surface with this transcendental lattice admits an Enriques involution
if and only if $M$ is even.
Moreover, the involution turns out to be of base change type,
so we can, at least in principle, give a very explicit description of these surfaces.
\end{rem}

\subsection{Explicit component of $\Ft\cap\Ff$}
\label{ss:explicit}

We conclude this paper by working out the first case of \eqref{eq:12} explicitly.
That is, we aim for K3 surfaces with transcendental lattice
\begin{eqnarray}
\label{eq:T-ex}
T = U(3) + \langle 12\rangle.
\end{eqnarray}
By Remark \ref{rem:jac},
this could be done purely on the level of Jacobians of $\Yt$ or $\Yf$,
but here we shall rather continue to work with $\Ys$.
The lattice enhancement raises the rank of the N\'eron-Severi lattice by one
while the discriminant changes from $-36$ to $108$.
By the theory of Mordell-Weil lattices \cite{ShMW}, this can only be achieved by adding
a section $Q$ of height $3$.
Up to adding a torsion section,
we may assume $Q$ to be induced by the quadratic twist $\RES'$ of $\Xs$
corresponding to the quadratic base change \eqref{eq:base-change}.
I.e. $Q$ comes from a section $Q'$ of height $3/2$ on $X'$.
Note that $X'$ inherits the 2-torsion section from $\Xs$.
At the same time, this will ease the explicit computations and
limit the possible configurations for $Q'$.
Indeed, using the height formula from \cite{ShMW} 
it is easy to see that there are only two possible cases for $Q'$ 
up to adding the two-torsion section:
\begin{itemize}
\item
either $Q'$ meets exactly one $I_0^*$ fiber (at a component not met by the 2-torsion section) and the $I_6$ fiber  (at the component met by the 2-torsion section) non-trivially,
\item
or it intersects non-trivially 
exactly one $I_0^*$ fiber (at a component not met by the 2-torsion section),
the $I_6$ fibre (at a component adjacent to the zero component),
and the $I_3$ fiber.
\end{itemize}
We can compute the N\'eron-Severi lattice and
the transcendental lattice of the resulting covering K3 surfaces
by the same means as in \ref{ss:sanity}:
simply compute the rank 3 orthogonal complement of $E_8^2$ inside $\NS$.
We obtain $T=U+\langle 108\rangle$ for the second case and 
the desired transcendental lattice from \eqref{eq:T-ex} for the first.

We continue to work out the first case in more detail.
Let us assume that the quadratic base change \eqref{eq:base-change}
ramifies at $a,b\in\PP^1$.
For ease of computations, we shall use an extended Weierstrass form of $X'$
which locates the 2-torsion section at $(0,0)$:
\[
X':\;\;\; y^2 = x(x^2 +(t-a)(t-b) (t^2/4+t-2)x+(t-a)^2(t-b)^2(1-t))
\]
Then we can implement the section $Q'$ to have the $x$-coordinate $c(t-a)$.
Solving for this to give a square upon substituting into the extended Weierstrass form leads to
\[
a = -\frac 13\dfrac{(b+2)^2}{b-4}, \;\;\;  c =(b+8)(b-1)^2/27.
\] 
Thus we obtain explicitly a one-dimensional family of K3 surfaces 
with transcendental lattice \eqref{eq:T-ex}.
Unless the base change degenerates or the ramification points hit the fibers of type $I_1$ or $I_3$,
i.e. for
$
b \not\in\{ -8,-2,0,1,10\}
$,
the resulting K3 surface $\XXX$ possesses the Enriques involution $\psi$
of base change type constructed in \ref{ss:6321}.
By Lemma \ref{lem:431}, the quotient surface $S$ lies in $\Ff$. 
We can also verify geometrically that $S\in\Ft$.
To this end
we use that the induced section $Q$ of height $3$ on $\XXX$
meets only the two $I_6$ fibers non-trivially -- in the same component as the 2-torsion section $P$,
i.e. opposite the zero component --
and it meets the zero section $O$ in the ramified fiber above $t=b$.
Hence $Q, O$ and the identity component of either of the $I_2$ fibers form a triangle,
i.e. they give a divisor $D$ of Kodaira type $I_3$.
Perpendicular to $D$, we find 
\begin{itemize}
\item
another $I_3$ formed by the sections $P, (P-Q)$ and the non-identity component of the other $I_2$ fiber;
\item
6 A$_2$'s contributed from the $I_6$ and $I_3$ fibres;
\item
4 sections of the induced elliptic fibration $\pi_{|D|}$ given by the remaining components of the $I_6$ fibers.
\end{itemize}
We conclude that $\pi_{|D|}$ is a jacobian elliptic fibration with 8 fibers of type $I_3$.
Hence it comes from $\Xt$ by some quadratic base change.
Finally, one directly verifies that the above rational curves on $\XXX$ are interchanged by 
the Enriques involution $\psi$.
Therefore, $\pi_{|D|}$ induces an elliptic fibration with 4 fibers of type $I_3$
on $S=\XXX/\psi$.
That is, $S\in\Ft$ as claimed.

\subsection*{Acknowledgements} 
We are grateful to the  referees for their crucial comments.
We would like to thank M. Joumaah,
R.~Kloosterman, H.~Ohashi and C.~Peters
for helpful  discussions,
and especially to JH Keum for pointing out the special case of the $II^*$ fibration
in the proof of Lemma \ref{lem-1-2}.
We started talking about 
this project  in March 2011 when Sch\"utt enjoyed the hospitality of  Jagiellonian University in Krakow.
Special thanks to S\l awomir Cynk.

\end{document}